\numberwithin{equation}{section}
\patchcmd\Gread@eps{\@inputcheck#1 }{\@inputcheck"#1"\relax}{}{}
\theoremstyle{plain}
\newtheorem{lemma}{Lemma}[section]
\newtheorem*{theorem*}{Theorem}
\newtheorem*{lemma*}{Lemma}
\newtheorem*{proposition*}{Proposition}
\newtheorem*{conjecture*}{Conjecture}
\newtheorem*{corollary*}{Corollary}
\newtheorem*{problem*}{Problem}
\newtheorem{theorem}[lemma]{Theorem}
\newtheorem{conjecture}[lemma]{Conjecture}
\newtheorem{corollary}[lemma]{Corollary}
\newtheorem{proposition}[lemma]{Proposition}
\theoremstyle{definition}
\newtheorem{definition}[lemma]{Definition}
\theoremstyle{remark}
\newtheorem{example}[lemma]{Example}
\newtheorem{remark}[lemma]{Remark}
\newcommand{\lra}{\longrightarrow}
\newcommand{\flra}[1]{\stackrel{#1}{\lra}}
\newcommand{\Z}{\mathbb{Z}}
\newcommand{\CC}{\mathbb{C}}
\newcommand{\QQ}{\mathbb{Q}}
\newcommand{\OO}{\mathcal{O}}
\newcommand{\te}{\otimes}
\newcommand{\sm}{\setminus}
\newcommand{\bv}{{\bf v}}
\renewcommand{\P}{\mathbb{P}}
\newcommand{\PP}{\mathbb{P}}
\DeclareMathOperator{\Bl}{Bl}
\DeclareMathOperator{\Hom}{Hom}
\DeclareMathOperator{\Pic}{Pic}
\DeclareMathOperator{\Ext}{Ext}
\DeclareMathOperator{\ext}{ext}
\DeclareMathOperator{\sHom}{\mathcal{H}\kern -.5pt\mathit{om}}
\DeclareMathOperator{\sTor}{\mathcal{T}\kern -1.5pt\mathit{or}}
\setlist[enumerate,1]{label={\rm(\arabic*)}, ref={\rm\arabic*}}
\title{Interpolation and moduli spaces of vector bundles on very general blowups of $\P^2$}
\author{Izzet Coskun}
\address{Department of Mathematics, Statistics and CS, University of Illinois at Chicago, Chicago, IL 60607, USA}
\email{icoskun@uic.edu}
\author{Jack Huizenga}
\address{Department of Mathematics, The Pennsylvania State University, University Park, PA 16802, USA}
\email{huizenga@psu.edu}
\begin{document}


\maketitle

\begin{prelims}

\DisplayAbstractInEnglish

\bigskip

\DisplayKeyWords

\medskip

\DisplayMSCclass

\end{prelims}


\newpage

\setcounter{tocdepth}{1}

\tableofcontents


\section{Introduction}

In this paper, we study certain moduli spaces of vector bundles on the blowup of $\PP^2_\CC$ at $n \geq 10$ very general points. We describe their geometry very explicitly and find that they can have many connected components of different dimensions. In fact, assuming the Segre--Harbourne--Gimigliano--Hirschowitz (SHGH) conjecture (see Conjecture~\ref{conj-SHGH}), we find that they can have arbitrarily many components of arbitrarily large dimension. This is in strong contrast to the behavior of moduli spaces on minimal rational surfaces and certain del Pezzo surfaces. To the best of our knowledge,  our examples are the first time these phenomena have been observed on rational surfaces.

 Throughout the paper, let $X$ be the blowup of $\P^2$ at $n$ very general points $p_1,\ldots,p_n$.  Let $H$ denote the pullback of the class of a line.  Let $E_i$ denote the exceptional divisor lying over $p_i$, and set  $E =\sum_{i=1}^n E_i$.  The canonical divisor $K = K_X = -3H +E$ has self-intersection $K^2 = 9-n$.  The geometry of the surface $X$ changes dramatically based on the sign of $K^2$.  For $n\leq 8$, the divisor $-K$ is ample and $X$ is a del Pezzo surface.  For $n=9$, the divisor $-K$ is still effective.  On the other hand, for $n\geq 10$, the divisor $-K$ is not even effective. Our results will show that this dramatic change is also reflected in the behavior of moduli spaces of vector bundles on $X$. 
 
We polarize the surface $X$ by an ample divisor of the form $A_t = t H -E$. When $n \geq 10$, Nagata conjectures that $A_t$ is ample if $t > \sqrt{n}$; see \cite{Nagata}. Set $B= A_{\sqrt{n}} = \sqrt{n} H - E$ to be the conjectural nef ray. Nagata's conjecture is only known when $n$ is a perfect square, but the full Nagata conjecture is a consequence of the SHGH conjecture.  We record the numerical invariants of a vector bundle by $(r,c_1,\chi)$, where $r$ is the rank, $c_1$ is the first Chern class, and $\chi$ is the Euler characteristic.  We let $M_{X,A_t}(r,c_1,\chi)$ denote the moduli space of $A_t$-semistable sheaves with numerical invariants $(r,c_1,\chi)$.

In this paper, we study the moduli spaces $M_{X,A_t}(2,K,\chi)$.  We mostly focus on the case where $\chi\geq 1$ is positive, with special emphasis on the case where $\chi$ is the maximal Euler characteristic of an $A_t$-stable bundle.   For $10\leq n\leq 17$, this maximal Euler characteristic turns out to be $\chi = 2$.  On the one hand, the moduli spaces $M_{X,A_t}(2,K,\chi)$ are easiest to describe when $\chi$ is as large as possible; on the other hand, these spaces are also typically among the most pathological moduli spaces.  We show that when $n \leq 9$, the moduli spaces $M_{X, A}(2, K, \chi \geq 1)$ are empty for any ample divisor $A$ (see Proposition~\ref{prop-empty}). Similarly, if $n \geq 10$, then  $M_{X, A_t}(2, K, \chi \geq 1)$ is empty for $t> \frac{n}{3}$ (see Proposition~\ref{prop-empty10}). Hence, we will be interested in these moduli spaces when $n\geq 10$ and the polarization is close to the Nagata bound.

Our first main result shows that for any bundle $V$ with invariants $(2, K, \chi \geq 1)$, there is a unique effective divisor $D$ such that $V$ fits in an  exact sequence of the form
$$0 \lra \OO(D) \lra V \lra K(-D) \otimes I_Z \lra 0,$$
where $Z$ is a zero-dimensional scheme (see Theorem~\ref{thm-type}).  We will say that $V$ is a bundle of \emph{type} $D$.  The moduli spaces $M_{X,A_t}(2,K,\chi)$ are therefore stratified by the type of a bundle, and different types frequently give rise to different components in moduli spaces.  However, the possible types $D$ of stable bundles are extremely special:  if there is an $A_t$-stable bundle of type $D$, then $D$ must satisfy $2B\cdot D < B\cdot K$ and $\chi(D)\geq 1$ (see Proposition~\ref{prop-Dstablenecessary}). Given an effective $D$ with $\chi(D)\geq 1$ and $2B\cdot D < B\cdot K$, there is a unique value $t_D$ where  
$2 A_{t_D} \cdot D = A_{t_D} \cdot K$. A bundle $V$ of type $D$ can only be semistable for polarizations $A_t$ with $t \leq t_D$.  As~$t$ decreases past $t_D$ towards $\sqrt{n}$, the moduli spaces $M_{X,A_t}(2,K,\chi)$ can gain points parameterizing bundles of type $D$.    

When $n = 16$ or $25$, we are able to give the following description of moduli spaces that is unconditional on the Nagata or SHGH conjectures.  In these cases, there are only finitely many possible types $D$ of a semistable bundle.  Similar arguments could extend these descriptions to higher perfect squares $n$.

\begin{theorem}\label{thm11}\leavevmode
\begin{enumerate}
\item\label{thm11-1} Let $n=16$.
For $\frac{14}{3} < t < \frac{16}{3}$, the moduli space $M_{X, A_t}(2, K, 2)$ is isomorphic to $\PP^5$. For $4 < t < \frac{14}{3}$, the moduli space $M_{X, A_t}(2, K, 2)$ is isomorphic to a blowup of\, $\PP^5$ at $16$ points.

\item\label{thm11-2} Let $n=25$.  For $5 < t  \leq \frac{27}{5}$, the moduli space $M_{X, A_t}(2, K, 4)$ is isomorphic to a disjoint union of\, $25$ copies of\, $\PP^8$.
\end{enumerate}
\end{theorem}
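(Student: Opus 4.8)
The plan is to exploit the type stratification of Theorem~\ref{thm-type} together with the numerical constraints of Proposition~\ref{prop-Dstablenecessary}. Because $n=16$ and $n=25$ are perfect squares, $B=\sqrt n\,H-E$ is an honest integral class, so the finitely many effective $D=aH-\sum b_iE_i$ with $\chi(D)\ge1$ and $2B\cdot D<B\cdot K$ can be listed explicitly; here $B\cdot K=n-3\sqrt n$ equals $4$ and $10$ in the two cases. Writing $B\cdot D=\sqrt n\,a-\sum b_i$, $\chi(D)=1+\tfrac12\bigl(a(a+3)-\sum b_i(b_i+1)\bigr)$, and $t_D=\tfrac{n+2\sum b_i}{2a+3}$, the theorem reduces to three tasks: enumerate the admissible types and their walls $t_D$; present the bundles of each type as a projective space of extensions; and track how these strata assemble as $t$ varies.

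For~\ref{thm11-1}, the identity $\chi(V)=2\chi(D)-\length Z$ together with $\chi(V)=2$, $\chi(D)\ge1$, and the bound $2B\cdot D<4$ leaves only $\chi(D)=1$ with $Z=\emptyset$; running through the finite list of such $D$ then shows that the only admissible types with $t_D>4$ are $D=0$ (with $t_0=16/3$) and $D=E_i$, $i=1,\dots,16$ (each with $t_{E_i}=14/3$), every other candidate having $t_D\le4$. The vanishings needed to control these classes, such as $h^0(K-E_i)=0$ and $h^0(E_i-K)=0$, are general-position statements for the very general points. Each type gives the family $\PP(\Ext^1(\OO(K-D),\OO(D)))=\PP(H^1(2D-K))$, and Riemann--Roch plus Serre duality yield $H^1(-K)=\CC^6$ and $H^1(2E_i-K)=\CC^5$, so the strata are $\PP^5$ and sixteen copies of $\PP^4$. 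I would verify that every such extension is a stable locally free sheaf and that the resulting map to the moduli space is an isomorphism onto its stratum; since for $14/3<t<16/3$ only $D=0$ survives, this already gives $M_{X,A_t}(2,K,2)\cong\PP^5$.

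The crux of~\ref{thm11-1}, and the step I expect to be hardest, is the wall-crossing at $t=14/3$. The strictly semistable objects there are the polystable sheaves $P_i=\OO(E_i)\oplus\OO(K-E_i)$, $i=1,\dots,16$. For $t>14/3$ the sheaves $S$-equivalent to $P_i$ are extensions of $\OO(E_i)$ by $\OO(K-E_i)$, and since $\Ext^1(\OO(E_i),\OO(K-E_i))=H^1(K-2E_i)=\CC$ each $P_i$ is represented by a single point $q_i\in\PP^5$; for $t<14/3$ they are extensions of $\OO(K-E_i)$ by $\OO(E_i)$, giving $\PP(H^1(2E_i-K))=\PP^4$. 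Thus crossing the wall replaces sixteen points by sixteen $\PP^4$'s, and it remains to prove this is precisely the blowup of $\PP^5$ at $q_1,\dots,q_{16}$: I would show the $q_i$ are distinct smooth points, that the moduli space remains smooth of dimension $5$ across the wall, and that the universal family induces the contraction $M_{X,A_t}(2,K,2)\to\PP^5$ collapsing the exceptional $\PP^4$'s. Computing the $\Ext$-groups governing the normal directions on each side of the wall, and ruling out any further walls in $4<t<14/3$, is where the genuine work lies.

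The case~\ref{thm11-2} runs along the same lines with $n=25$ and $\chi=4$, where $\chi(V)=2\chi(D)-\length Z=4$ and $2B\cdot D<10$ force $\chi(D)=2$ and $Z=\emptyset$, i.e.\ $D(D-K)=2$; enumerating effective $D$ with $a(a+3)-\sum b_i(b_i+1)=2$ and $5a-\sum b_i<5$ leaves exactly $D=H-E_i$, $i=1,\dots,25$, each with $t_D=27/5$ and stratum $\PP(H^1(2(H-E_i)-K))=\PP^8$. The feature that distinguishes this case from $n=16$ is that the opposite extension group vanishes, $\Ext^1(\OO(H-E_i),\OO(K-H+E_i))=H^1(K-2(H-E_i))=0$; hence for $t>27/5$ no stable bundle is $S$-equivalent to $P_i=\OO(H-E_i)\oplus\OO(K-H+E_i)$, so the twenty-five $\PP^8$'s appear at the wall as isolated, pairwise disjoint components rather than as exceptional loci. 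I would then check that there are no further walls in $5<t<27/5$, so that the moduli space is constant there and equal to the disjoint union of twenty-five copies of $\PP^8$, and treat the endpoint $t=27/5$, where $\mu$-stability degenerates, by a separate analysis of Gieseker (semi)stability.
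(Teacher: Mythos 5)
Your proposal is correct and follows essentially the same route as the paper: the type stratification of Theorem~\ref{thm-type}, the Diophantine enumeration of admissible types (Theorems~\ref{thm-D16} and~\ref{thm-D25}, which show the only candidates are $\OO$ and the $E_i$ for $n=16$, and the $H-E_i$ for $n=25$), the identification of each stratum with $\P\Ext^1(K(-D),\OO(D))\cong\P H^1(2D-K)$, and the dichotomy between the blowup at sixteen points versus twenty-five disjoint components, governed exactly as you say by whether the reverse extension group $\Ext^1(\OO(D),K(-D))\cong H^1(\OO(2D))^*$ vanishes. The one step you explicitly defer --- producing the contraction $M\to\P^5$ collapsing the sixteen $\P^4$'s --- is carried out in the paper by sending each bundle $V$ to the line $\Hom(K,F)\subset\Ext^1(K,\OO)$ arising from the sequence $0\to\OO\to V\to F\to 0$ determined by the unique section of $V$, after which smoothness of $M$ (every tangent space has dimension $\ext^1(V,V)=5$), connectedness, the universal property of the blowup, and Zariski's main theorem complete the identification.
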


In particular, part~\eqref{thm11-2} gives an explicit example of a reducible moduli space of vector bundles on a rational surface.

On the other hand, for $n$ which is not a perfect square, there are typically infinitely many effective divisors~$D$ satisfying $\chi(D)\geq 1$ and $2B\cdot D < B\cdot K$.  If we assume the Nagata conjecture, these divisors can be classified by solving a series of Pell's equations; we do this explicitly in Section~\ref{sec-D}.

\begin{example}
Suppose the Nagata conjecture holds.  If $n = 10$, then the effective divisors $D$ satisfying $\chi(D) \geq 1$ and $2B\cdot D < B\cdot K$ form an infinite list $$\OO, \quad 57H - 18E, \quad 2220H-702E, \quad 84357H-26676E, \quad \ldots.$$
These divisors can be read off from the continued fraction expansion of $\sqrt{10}$; see Theorem~\ref{thm-10points}.
\end{example}

If we assume the SHGH conjecture, then the types $D$ which could contribute to the moduli spaces have good cohomological properties which makes it possible to completely describe the moduli spaces.  For example, if $10\leq n\leq 16$, then $D$ is the class of a reduced, irreducible, rigid curve on $X$ (see Theorem~\ref{thm-irr}).  The next theorem then summarizes our results from Section~\ref{sec-components} which describe the structure of the moduli spaces.

\begin{theorem}\label{thm-intro}
Let $10 \leq n \leq 15$, and assume the SHGH conjecture.
\begin{enumerate}
\item If\, $t > \frac{n}{3}$, then $M_{X, A_t}(2, K, 2)$ is empty.
\item Suppose $11\leq n\leq 15$. As $t$ decreases past $\frac{n}{3}$, $M_{X, A_t}(2, K, 2)$ acquires a component isomorphic to $\PP^{n-11}$. For $11\leq n \leq 12$, this component persists without modification as $t$ decreases to  $\sqrt{n}$. For $13 \leq n \leq 15$, this component is blown up at $n$ points as $t$ decreases past $\frac{n-2}{3}$ and then persists without modification as $t$ decreases to $\sqrt{n}$.
\item For every nontrivial, nonexceptional divisor $D$ satisfying $\chi(D)\geq 1$ and $2B \cdot D < B \cdot K$, $M_{X, A_t}(2, K, 2)$ acquires a new component isomorphic to $\PP^{-\chi(2D-K) -1}$ as  $t$ decreases past $t_D$. This component persists without modification as $t$ decreases to $\sqrt{n}$.
\item This is a complete description of the components of\, $M_{X, A_t}(2, K, 2)$, and they are all disjoint.
\end{enumerate}
\end{theorem}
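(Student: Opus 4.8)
The plan is to stratify $M_{X,A_t}(2,K,2)$ by the type $D$ of a bundle and to track each stratum as $t$ varies. Part~(1) is immediate: it is Proposition~\ref{prop-empty10} specialized to $\chi=2$. For the rest, I would first pin down the possible types. By Theorem~\ref{thm-type} every bundle has a unique type $D$; by Proposition~\ref{prop-Dstablenecessary} a type supporting an $A_t$-stable bundle must satisfy $\chi(D)\geq 1$ and $2B\cdot D<B\cdot K$; and under SHGH, Theorem~\ref{thm-irr} forces $D$ to be $0$, an exceptional class $E_i$, or a reduced irreducible rigid curve. In each case rigidity gives $h^0(\OO(D))=1$ and $h^1(\OO(D))=0$, while $K-D$ is not effective, so $h^2(\OO(D))=h^0(\OO(K-D))=0$; hence $\chi(D)=1$.

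Next I would reduce each stratum to an extension problem. Writing the defining sequence $0\to\OO(D)\to V\to K(-D)\otimes I_Z\to 0$ and using Serre duality $\chi(\OO(K-D))=\chi(\OO(D))$, one gets $\chi(V)=2\chi(D)-\length Z$, so $\chi(V)=2$ together with $\chi(D)=1$ forces $Z=\emptyset$. Thus $K(-D)$ is a line bundle, every bundle of type $D$ is automatically a rank-two vector bundle given by a nonsplit extension
$$0\to\OO(D)\to V\to K(-D)\to 0,$$
and these are classified up to isomorphism by $\PP\Ext^1(K(-D),\OO(D))=\PP H^1(\OO(2D-K))$. Here $h^2(\OO(2D-K))=h^0(\OO(2K-2D))=0$ since $2K-2D$ has negative degree, while $h^0(\OO(2D-K))=0$ follows from the good cohomological behavior of these types under SHGH; hence $\dim\Ext^1=-\chi(2D-K)$, and the type-$D$ stratum is a subscheme of $\PP^{-\chi(2D-K)-1}$. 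A Riemann--Roch computation evaluates this as $\PP^{n-11}$ for $D=0$ and $\PP^{n-12}$ for $D=E_i$.

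The heart of the argument is the stability and wall analysis. Writing $D=aH-\sum b_jE_j$, the subsheaf $\OO(D)$ has the same $A_t$-slope as $V$ exactly at $t_D=(n+2\sum b_j)/(2a+3)$, with $t_0=n/3$ and $t_{E_i}=(n-2)/3$, and it ceases to destabilize precisely as $t$ drops below $t_D$. Since local freeness is automatic, only stability is at issue, and I would show that just below $t_D$ every nonsplit extension above is $A_t$-stable, that $\OO(D)$ is the only subobject that can ever destabilize, and hence that the entire $\PP^{-\chi(2D-K)-1}$ is stable and is unaffected by any further wall down to $\sqrt n$; this yields the persistence claims and part~(3). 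The delicate point is the interaction of the trivial type with the exceptional classes. For $13\leq n\leq 15$ one has $n-3\sqrt n>2$, so $2B\cdot E_i<B\cdot K$ and the $E_i$ contribute, whereas for $n\leq 12$ they do not. As $t$ crosses $t_{E_i}=(n-2)/3$, a type-$0$ bundle acquires the destabilizing subsheaf $\OO(E_i)$ exactly when its extension class lies at one distinguished point of $\PP^{n-11}$; at the wall such a bundle becomes $S$-equivalent to $\OO(E_i)\oplus K(-E_i)$, and the new type-$E_i$ family $\PP^{n-12}$ replaces that point. A local comparison of the two families of extensions identifies the outcome as the blowup of $\PP^{n-11}$ at the $n$ points corresponding to the $E_i$, giving part~(2).

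Finally, part~(4) would follow from the uniqueness of type in Theorem~\ref{thm-type}: the strata are disjoint and exhaust the moduli space, and each is closed because membership in a fixed nonexceptional type is both open (stability is open in families of extensions) and closed (for these rigid classes the destabilizing subobject is preserved under specialization). I expect the main obstacle to be precisely this wall-crossing analysis — verifying that every nonsplit extension is stable, ruling out spurious walls below $t_D$, and above all establishing that the exceptional walls produce a blowup rather than a disjoint component, which requires a careful local study of the two families of extensions and their $S$-equivalence at $t=(n-2)/3$.
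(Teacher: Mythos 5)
Your overall architecture matches the paper's: stratify by type, reduce each stratum to nonsplit extensions of $K(-D)$ by $\OO(D)$, locate the walls $t_D$, and handle the interaction of type $\OO$ with the types $E_i$ separately. Parts (1) and (3) and the dimension counts are essentially the paper's argument. But there are two places where what you write does not yet constitute a proof.

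First, knowing that the type-$D$ bundles are parameterized by $\PP\Ext^1(K(-D),\OO(D))\cong\PP^{-\chi(2D-K)-1}$ does not by itself show that this projective space is a \emph{connected component} of the moduli space with the claimed scheme structure. Your justification for openness of the stratum --- ``stability is open in families of extensions'' --- is not the right statement: openness of the stable locus says nothing about whether the moduli space has tangent directions transverse to the family of extensions, and the specialization behavior of types only gives $D\leq D'$ under specialization, so the type-$D$ locus is a priori neither open nor closed. The missing ingredient is the computation of $\Ext^1(V,V)$ (the paper's Lemma~\ref{lem-ext}, via repeated application of $\Hom(-,-)$ to the defining sequence together with Proposition~\ref{prop-2DminusK} and Corollary~\ref{cor-doubleCohomology}), which shows $\ext^1(V,V)=-\chi(2D-K)-1$; one then checks, as in Theorem~\ref{thm-componentD}, that the classifying map from the extension space is injective and an isomorphism on tangent spaces, so its image is open, and it is closed by properness of the source. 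Without this, parts (3) and (4) are incomplete.

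Second, the blowup identification in part (2) for $13\leq n\leq 15$ is exactly the step you defer, and the ``local comparison of the two families of extensions and their $S$-equivalence at $t=(n-2)/3$'' you propose is not how one gets it; a local analysis near the wall does not obviously produce the global scheme structure. The paper's mechanism is global and works entirely below the wall: every $V$ of type $\OO$ or $E_i$ has a \emph{unique} section, and the cokernel $F$ of $\OO\to V$ gives a canonical one-dimensional subspace $\Hom(K,F)\hookrightarrow\Ext^1(K,\OO)$ (using $\Hom(K,V)=0$), defining a morphism $\pi\colon M\to\PP\Ext^1(K,\OO)$ that is the identity on stable type-$\OO$ bundles and contracts each $Y_i\cong\PP\Ext^1(K(-E_i),\OO(E_i))$ to the point $q_i$. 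Combining this with smoothness of $M$ (uniform tangent space dimension $n-11$ from Lemma~\ref{lem-ext}), a connectedness argument, the universal property of the blowup, and Zariski's main theorem identifies $M$ with $\Bl_{q_1,\ldots,q_n}\PP\Ext^1(K,\OO)$. You correctly identified the $n$ distinguished points as the images of $\Hom(K,\OO_{E_i}(-1))\to\Ext^1(K,\OO)$, but without the blowdown morphism (or an equivalent global construction) the claim that the result is a blowup rather than, say, a disjoint union or some other modification remains unproved.
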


We list the first several components of each of the moduli spaces when $10\leq n\leq 13$ in tables in Examples~\ref{ex-components10} and~\ref{ex-components13}.  A detailed study of the possible divisors $D$ shows that each of the moduli spaces in Theorem~\ref{thm-intro} will have arbitrarily many components of arbitrarily large dimension if $t$ is sufficiently close to $\sqrt{n}$.  This observation implies the following corollary that applies to moduli spaces with arbitrary Euler characteristic.

 \begin{corollary}
 Assume the SHGH conjecture, and let $10 \leq n \leq 12$. Let $\chi \leq 2$ be an integer, and let $k$ and $r$ be positive integers. There exists an $\epsilon >0$ such that if $\sqrt{n} < t < \sqrt{n}+ \epsilon$, then the moduli space $M_{X, A_t}(2, K , \chi)$ has at least $k$ irreducible components of dimension $r$.
 \end{corollary}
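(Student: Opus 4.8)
The plan is to construct the required components out of a single well-chosen type by exploiting the permutation symmetry of the very general points. Fix $\chi\le 2$, $k$, and $r$. The structural input I would use is the extension of Theorem~\ref{thm-intro} to arbitrary $\chi\le 2$ furnished by the methods of Section~\ref{sec-components}: for $t$ sufficiently close to $\sqrt n$, the space $M_{X,A_t}(2,K,\chi)$ is a disjoint union of irreducible components indexed by the \emph{contributing types} $D$, that is, the effective classes with $\chi(D)\ge 1$ and $2B\cdot D < B\cdot K$. The component of type $D$ is nonempty precisely for $t< t_D$, persists unchanged as $t\to\sqrt n$, and has dimension $\delta(D)$ which is a numerical invariant depending only on the intersection numbers $D^2$ and $D\cdot K$ (for $\chi=2$ it equals $-\chi(2D-K)-1$ by Theorem~\ref{thm-intro}(3), and the analogous formula holds in general). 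I would record that distinct contributing types yield distinct, pairwise disjoint, irreducible components.

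The crucial point is a symmetry principle. Because $p_1,\dots,p_n$ are very general, the symmetric group $S_n$ acts on $\NS(X)$ by permuting the $E_i$ and fixing $H$, and this action preserves the intersection form and the ($S_n$-invariant) classes $K$ and $B$; moreover, by monodromy it carries reduced irreducible rigid curves to reduced irreducible rigid curves, hence preserves the set of contributing types. Since $\delta(D)$ and $t_D$ are built entirely from $S_n$-invariant data (the polarizations $A_t$ and the class $K$ are themselves $S_n$-invariant), every class in the orbit $S_n\cdot D$ of a contributing type $D$ is again a contributing type sharing the wall $t_D$ and the component dimension $\delta(D)$.

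It therefore suffices to exhibit one contributing type $D_0=dH-\sum m_i E_i$ with $\delta(D_0)=r$ whose multiplicity vector $(m_1,\dots,m_n)$ admits at least $k$ distinct rearrangements, so that $\lvert S_n\cdot D_0\rvert\ge k$. Granting this, set $\epsilon=t_{D_0}-\sqrt n$, which is positive because $D_0$ is contributing (the function $t\mapsto A_t\cdot(2D_0-K)$ is increasing and negative at $t=\sqrt n$). Then for $\sqrt n< t<\sqrt n+\epsilon$ the $\ge k$ classes of $S_n\cdot D_0$ index nonempty, pairwise disjoint, irreducible components of $M_{X,A_t}(2,K,\chi)$, each of dimension $r$, which is the assertion.

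The main obstacle is precisely this last construction. The symmetric solutions produced by the Pell/continued-fraction classification of Section~\ref{sec-D} have all $m_i$ equal and hence trivial $S_n$-orbit, so I must instead produce genuinely \emph{asymmetric} contributing rigid curves of prescribed component dimension. The approach is to perturb a symmetric Pell solution — raising the degree $d$ and redistributing the multiplicities — so as to split the multiplicity vector into at least $k$ distinct rearrangements while keeping $\chi(D_0)=1$ and $2B\cdot D_0<B\cdot K$, and tuning $D_0\cdot K$ to force $\delta(D_0)=r$; here the SHGH conjecture is invoked to guarantee that the resulting numerical class is actually represented by a reduced irreducible rigid curve. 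Showing that the target dimension $r$ and orbit size $k$ can be met simultaneously is the technical heart of the argument.
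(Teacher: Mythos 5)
There is a genuine gap, and it is fatal to the strategy rather than merely a missing technical step. Your whole construction rests on producing a contributing type $D_0$ whose multiplicity vector has at least $k$ distinct rearrangements. But for $10\leq n\leq 12$ no such $D_0$ exists: Proposition~\ref{prop-equal} forces every effective $D$ with $\chi(D)\geq 1$ and $2B\cdot D<B\cdot K$ to have \emph{all multiplicities equal} (and indeed Theorem~\ref{thm-10points}\eqref{thm-10p-2} classifies them completely as the divisors $D_k=d_kH-m_kE$ coming from the odd convergents of $\sqrt n$). Hence every $S_n$-orbit of a contributing type is a single point, and the permutation symmetry yields exactly one component per type, never $k$ of them. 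The "perturbation" you propose in the last paragraph — raising $d$ and redistributing the $m_i$ while keeping $\chi(D_0)=1$ and $2B\cdot D_0<B\cdot K$ — is precisely what Proposition~\ref{prop-equal} rules out. (Asymmetric types do occur for $13\leq n\leq 17$, which is why Example~\ref{ex-components13} shows "13 copies of $\P^{10}$" etc., but the corollary is stated for $10\leq n\leq 12$.) You have also made the problem harder than it is by aiming for components of dimension exactly $r$: the statement (see the theorem in Section~\ref{sec-components}, of which this corollary is a restatement) only requires dimension at least $r$, and hitting $r$ exactly is in fact impossible in general since the available dimensions $-\chi(2D_k-K)-1$ form a sparse set governed by the continued-fraction denominators.

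The paper's argument is quite different. For $\chi=2$ it uses that the dimensions of the components attached to $D_3,D_5,D_7,\ldots$ grow without bound (Remark~\ref{rem-growth}), so taking $t$ close enough to $\sqrt n$ already produces $k$ distinct components $M_1,\ldots,M_k$ of dimension $\geq r$ — distinctness comes from the infinitude of types, not from symmetry. For $\chi<2$ there is no claim (and you should not assume) that $M_{A_t}(2,K,\chi)$ decomposes into projective spaces indexed by types; instead one applies $2-\chi$ general elementary modifications to the bundles in each $M_i$, landing in components $M_i^{(2-\chi)}$ of $M_{A_t}(2,K,\chi)$ whose dimensions satisfy $\dim M_i+3(2-\chi)\leq\dim M_i^{(2-\chi)}\leq\dim M_i+4(2-\chi)$ by \cite{CoskunHuizengaPathologies}. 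Choosing the $M_i$ so that their dimensions pairwise differ by more than $2-\chi$ guarantees the $M_i^{(2-\chi)}$ are pairwise distinct. If you want to salvage your write-up, you would need to replace the symmetry mechanism by this "infinitely many types of growing dimension" mechanism and add the elementary-modification step for $\chi<2$.
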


When the polarization is fixed, moduli spaces of sheaves on surfaces behave well as $\chi$ tends to negative infinity. For example, by a theorem of O'Grady, see \cite{OGrady}, the moduli spaces are irreducible, reduced, and normal. However, for arbitrary $\chi$, the moduli spaces can be poorly behaved. For example, moduli spaces of sheaves on general type or elliptic surfaces can be reducible, nonreduced, and even disconnected (see \cite{CoskunHuizengaPathologies, CoskunHuizengaKopperDisconnected, Friedman, Friedman2,Kotschick, Mestrano, MestranoSimpson, OkonekVandeven} for some examples).

Let $Y$ be a birationally ruled surface, and let $F$ be the class of the fiber. Let $A$ be a polarization such that $(K_Y  + F) \cdot A < 0$. Walter proves that  the moduli space $M_{Y, A}({\bf v})$ is then irreducible provided that it is nonempty; see \cite{Walter}. In particular, all nonempty moduli spaces of sheaves  on $\PP^2$, Hirzebruch surfaces and $X$ with $n \leq 6$ are irreducible for every polarization; these moduli spaces have been studied in detail (see for example \cite{LePotier, CoskunHuizengaHExist, LevineZhang}). Similarly, for any rational surface $Y$, there exist polarizations $A$ satisfying $(K_Y + F) \cdot A < 0$. For example, this is the case on $X$ for $A_t$ with $t \gg 0$. The nonempty moduli spaces are irreducible on $X$ for such polarizations and have been studied in  \cite{Zhao}. In contrast, our results show that the irreducibility may fail when Walter's condition is violated. 

Our results are in part inspired by questions concerning the topology of moduli spaces. G\"{o}ttsche, see \cite{Gottsche}, computed the Betti numbers of the Hilbert schemes $Y^{[n]}$ of $n$-points on a smooth projective surface $Y$  and observed that they stabilize as $n$ tends to infinity. In fact, the Betti numbers monotonically increase as $n$ increases. Coskun and Woolf, see \cite{CoskunWoolf}, conjectured that the Betti numbers stabilize for moduli spaces of sheaves in general as $\chi$ tends to negative infinity and that the stable Betti numbers are  independent of the rank and the polarization. They proved the conjecture for moduli spaces on rational surfaces when the polarization satisfies $K_Y \cdot A < 0$ and the moduli space does not contain any strictly semistable sheaves. Our examples show that in the absence of the assumption $K_Y \cdot A < 0$, the topology of the moduli spaces can be fairly complicated. In particular, even on rational surfaces, the Betti numbers of moduli spaces are not monotonically increasing as $\chi$ decreases. Examples of this phenomenon were previously known on certain elliptic and general type surfaces; see \cite{CoskunHuizengaKopperDisconnected, Kotschick, OkonekVandeven}.  

\subsection*{Organization of the paper}
In Section~\ref{sec-prelim}, we recall the Nagata and SHGH conjectures and collect basic facts concerning very general blowups of $\PP^2$. In Section~\ref{sec-type}, we define the type of a bundle $V$ with character $(2, K , \chi \geq 1)$ and show that it is unique. In Section~\ref{sec-D}, we study effective divisors  $D$ that satisfy $\chi(D)\geq 1$ and $2B \cdot D < B \cdot K$ and explain how to classify them. In Section~\ref{sec-cohomology}, we study the cohomology of  such $D$ and associated divisors which are relevant to the calculation of the tangent space of the moduli space. In Section~\ref{sec-components}, we classify the components of the moduli spaces and prove our main theorems.  Finally, in Section~\ref{sec-square}, we study the cases where $n$ is a perfect square, where we can make our results independent of the SHGH conjecture.

\subsection*{Acknowledgments}
We would like to thank Benjamin Gould, Daniel Huybrechts, John Kopper, Daniel Levine, Yeqin Liu, Dmitrii Pedchenko, Matthew Woolf, and Junyan Zhao for valuable conversations about moduli spaces of sheaves on surfaces.  We would also like to thank the referee for their detailed comments on the paper.

\section{Preliminaries}\label{sec-prelim}

\subsection{Notation}
Throughout the paper, we work over the field $\CC$ of complex numbers.   Let $X$ be the blowup of $\P^2$ at $n$ very general points $p_1,\ldots,p_n$.  The Picard group of $X$ is
$$\Pic X \cong \Z H \oplus \Z E_1\oplus \cdots \oplus \Z E_n,$$
where $H$ is the pullback of a line in $\P^2$ and $E_1,\ldots,E_n$ are the exceptional divisors.  We have $H^2 = 1$, $H\cdot E_i = 0$, $E_i^2 = -1$, and $E_i\cdot E_j = 0$ for $i\neq j$.  We set $E = \sum_{i=1}^n E_i$.  For brevity, we let $E_{i_1\dots i_k} := \sum_{j=1}^k E_{i_j}$. For  example, $E_{123} = E_1 + E_2 + E_3$.  We write $\OO = \OO_X$ and $K = K_X$ for the trivial bundle and canonical bundle, respectively, and note that
$$ K = - 3H + E.$$
We compute $K^2 = 9-n$.  

\subsection{Ample divisors}
In this paper, we will study polarizations of $X$ of the form $A_t = tH - E$, where $t$ is a real number.  Since $A_t^2 = t^2 - n$ and $A_t\cdot H = t$, if $A_t$ is ample, then  $t> \sqrt{n}$.  The famous conjecture of Nagata claims that the converse is true once $n\geq 10$.

\begin{conjecture}[Nagata, \textit{cf.} \cite{Nagata}]
Let $n\geq 10$.  If\, $t> \sqrt{n}$, then $A_t$ is ample.  In particular,
$$B := A_{\sqrt{n}} = \sqrt{n} H - E$$
is nef.
\end{conjecture}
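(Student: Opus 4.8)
The plan is to recognize at the outset that this statement is the celebrated Nagata conjecture, one of the major open problems in the subject, so any honest ``proof proposal'' must consist of the available reductions, the single regime that is actually known, and an identification of where the genuine obstruction lies. Since ampleness of $A_t$ on the open ray $t>\sqrt{n}$ and nefness of the boundary class $B$ are equivalent formulations (by continuity together with the Nakai--Moishezon criterion, using that $A_t^2 = t^2 - n > 0$ automatically when $t>\sqrt{n}$), I would reduce the entire statement to a single family of inequalities: for every irreducible curve $C\subset X$ one must show $B\cdot C\ge 0$. Writing $C = dH - \sum_i m_i E_i$, the exceptional curves ($C = E_i$) give $B\cdot E_i = 1 > 0$ immediately, so the real content is the bound
$$\sum_{i=1}^n m_i \le \sqrt{n}\, d$$
for every irreducible curve $C = dH - \sum_i m_i E_i$ with $d\ge 1$.

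The key structural step I would carry out next is to isolate precisely which curves can possibly violate this bound. By Cauchy--Schwarz, $\sum_i m_i \le \sqrt{n}\,\sqrt{\sum_i m_i^2}$, so whenever $\sum_i m_i^2 \le d^2$, that is $C^2 \ge 0$, the bound holds for free. The $(-1)$-curves are also harmless: adjunction forces $\sum_i m_i = 3d - 1$, whence $B\cdot C = (\sqrt{n}-3)d + 1 > 0$ since $n\ge 10$. Thus nefness of $B$ is equivalent to ruling out irreducible curves of self-intersection $C^2\le -2$ that carry too much total multiplicity, and conjecturally no such curves exist on a very general blowup at all: in the spirit of the SHGH philosophy, the only negative curves should be the $(-1)$-curves. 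This reformulation clarifies the difficulty but does not resolve it, since it merely trades Nagata for the equally open problem of classifying negative curves.

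For the one regime that can genuinely be established, I would follow Nagata's original argument in the perfect-square case $n = m^2$ from \cite{Nagata}. Here one specializes the very general points to a highly symmetric configuration for which a well-chosen degeneration or self-map of $\PP^2$ makes the extremal linear system transparent, proves the bound $\sum_i m_i \le m\,d = \sqrt{n}\,d$ for that special configuration by an explicit intersection-theoretic computation against the distinguished curve, and then transports it back to the general point by upper semicontinuity of the dimensions of the relevant linear systems. This is exactly the unconditional input the present paper exploits when $n=16$ and $n=25$. Independently, I would record that the full conjecture is implied by the SHGH conjecture, which predicts the expected dimension of every linear system of plane curves with assigned multiple base points, with the only correction coming from $(-1)$-curves, and this prediction precisely forbids the overfull negative curves identified above.

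The hard part---and the reason no complete proof can be written---is the general value of $n$. Nagata's specialization is sharp only because $m^2$ points admit an exactly balanced special configuration; for non-square $n$ there is no known degeneration producing the sharp constant $\sqrt{n}$, and semicontinuity from any nearby configuration loses it. Equivalently, we possess no method to exclude the hypothetical high-degree, high-multiplicity curves of self-intersection $\le -2$ whose nonexistence the conjecture asserts. I would therefore present the statement honestly as a conjecture, prove it outright only for perfect squares via the specialization argument, and otherwise derive it as a consequence of the SHGH conjecture, exactly as the paper proceeds.
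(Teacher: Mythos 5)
Your proposal is correct in the only sense possible here: the statement is the Nagata conjecture, which the paper does not prove but records as a conjecture, citing Nagata's specialization argument for perfect squares and deriving the general case from SHGH (Remark~2.3(4)), exactly as you do. The only cosmetic difference is that your reduction to excluding irreducible curves with $C^2\le -2$ goes through Cauchy--Schwarz and adjunction, whereas the paper's SHGH$\Rightarrow$Nagata argument symmetrizes a hypothetical obstructing curve over the exceptional divisors to produce a homogeneous effective class $D$ with $D^2<0$ and then contradicts nonspeciality of large multiples; both are standard and equivalent in substance.
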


Nagata shows the conjecture is true when $n$ is a perfect square.  For other $n$, partial results towards the Nagata conjecture can be proved by exhibiting ample divisors $A_t$ with $t$ as close to $\sqrt{n}$ as possible. For $\alpha\geq \sqrt{n}$, we will call the statement that $A_{\alpha}$ is nef the $\alpha$-\emph{Nagata conjecture}.

\subsection{Linear series and the SHGH conjecture}
Consider a divisor class $D = dH - \sum_i m_i E_i$ on $X$ with $d\geq 0$.  In general, it is a highly nontrivial problem to compute the dimension of the linear series $|D|$ or, equivalently, the cohomology of the line bundle $\OO(D)$.  The  Segre--Harbourne--Gimigigliano--Hirschowitz (SHGH) conjecture provides an algorithm to compute this dimension.  We will call $D$ \emph{special} if both $h^0(\OO(D))$ and $h^1(\OO(D))$ are nonzero. Otherwise, $D$ is \emph{nonspecial}. If $D$ is nonspecial, then the cohomology of $\OO(D)$ is easily determined by the Euler characteristic $\chi(\OO(D))$.  Recall that a $(-1)$-curve on $X$ is a smooth rational curve $C\subset X$ with $C^2 = -1$.

\begin{conjecture}[SHGH, \textit{cf.} \cite{Segre,Harbourne,Gimigliano,Hirschowitz}]\label{conj-SHGH}
The divisor $D$ is special if and only if it contains a multiple $(-1)$-curve in its base locus.
\end{conjecture}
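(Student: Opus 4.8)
The plan is to attempt this by induction on the degree $d$ of $D = dH - \sum_i m_i E_i$, reducing the general case to manageable base cases through two classical mechanisms. First I would dispose of the elementary direction: if the base locus of $|D|$ contains a $(-1)$-curve $C$ with multiplicity $k \geq 2$, then writing $D = kC + D'$ one shows by a Riemann--Roch computation on $C$ (using $C^2 = -1$ and $C\cdot K = -1$ from adjunction) that successively peeling off copies of $C$ forces $h^1(\OO(D)) > 0$, while $h^0(\OO(D)) > 0$ holds automatically, so $D$ is special. This direction is a finite calculation and presents no conceptual difficulty.

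The substance lies in the converse: every special $D$ must acquire a multiple $(-1)$-curve in its base locus. Here the strategy is to degenerate the configuration of the $n$ general points so that the cohomology of $\OO(D)$ becomes computable while remaining semicontinuously controlled. Two complementary techniques are available. In the Horace method of Hirschowitz one specializes a subset of the $p_i$ onto a line $L$ (or a conic) and uses the restriction sequence $0 \to \OO(D-L) \to \OO(D) \to \OO_L(D) \to 0$ to split the problem into a system of lower degree on $X$ and a one-dimensional interpolation problem on $L$; the differential Horace refinement lets one impose the multiple points a little at a time. Iterating, one reduces to standard classes that may be normalized by Cremona transformations: these act on $\Pic X$ by the quadratic transformation based at three points and preserve speciality, so they allow one to lower $d$ until the class is either empty, nonspecial by inspection, or manifestly built from $(-1)$-curves. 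Running the induction requires tracking the $(-1)$-curves produced at each specialization step and verifying that any $h^1$ created survives only through such curves.

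The hard part, and the reason the full statement remains open, is closing this induction uniformly. The degeneration of $\PP^2$ in the style of Ciliberto and Miranda produces limit surfaces whose components carry linear systems that must be matched across the double curves; the combinatorics of distributing the multiplicities $m_i$ among the components, and of controlling the first cohomology of the limit, becomes intractable as the $m_i$ grow. No choice of specialization is known that simultaneously keeps every intermediate system either nonspecial or transparently special for all tuples $(d; m_1, \dots, m_n)$. Consequently the argument succeeds only in restricted regimes: the case of uniform multiplicity two is the theorem of Alexander and Hirschowitz, and numerous families of bounded multiplicity have been settled, but a proof valid for arbitrary $D$ would require a fundamentally new way to certify that speciality is \emph{always} explained by a multiple $(-1)$-curve. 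I would therefore expect any complete proof to hinge on a structural classification of the base loci of special systems rather than on pushing the existing degenerations further.
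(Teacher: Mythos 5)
This statement is the SHGH conjecture itself; the paper does not prove it, and could not, since it remains open. It appears as Conjecture~\ref{conj-SHGH} with citations to Segre, Harbourne, Gimigliano, and Hirschowitz, and every result in the paper that depends on it is either explicitly conditional (the statements in Sections~\ref{sec-cohomology} and~\ref{sec-components} marked ``Assume SHGH'') or restricted to situations, such as $n=16$ and $n=25$ in Section~\ref{sec-square}, where the particular instances needed are known unconditionally. So there is no proof in the paper against which to measure your proposal.

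Your write-up correctly recognizes this, and that is the main thing to say. The forward direction you sketch --- that a $(-1)$-curve $C$ appearing with multiplicity at least $2$ in the base locus forces $h^1(\OO(D))>0$, by peeling off copies of $C$ through the restriction sequence and using that $h^1$ of $\OO_{\P^1}(C\cdot D)$ is positive once $C\cdot D\leq -2$ --- is indeed elementary and standard. Your account of the converse (Horace-type specializations, Cremona reduction, Ciliberto--Miranda degenerations, and why no known specialization closes the induction for arbitrary tuples of multiplicities) is an accurate survey of the partial results in the literature rather than a proof, and you say so explicitly. The one correction I would make is to the framing: this should not be presented as a proof attempt with a gap at the end, because the ``gap'' is the entire content of the conjecture. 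The honest statement, which is the one the paper adopts, is that the claim is known for $n\leq 9$, for homogeneous classes and bounded-multiplicity regimes as in the works of Ciliberto--Miranda, Dumnicki--Jarnicki, and Yang cited in Section~\ref{sec-prelim}, and is open in general; any argument one writes today can only be a reduction of the general case to these known regimes, not a proof.
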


If $n\leq 9$, then the SHGH conjecture is true (see \textit{e.g.} \cite{CilibertoMiranda}), so the conjecture becomes most interesting for $n\geq 10$.

\begin{remark} The following consequences of the conjecture are frequently useful:
\begin{enumerate}\label{rem-SHGHConsequences}
\item If $D$ is a reduced curve on $X$, then $D$ is nonspecial and $\chi(D) \geq 1$.

\item If $D$ is a reduced and irreducible curve on $X$ with $D^2 < 0$, then $D$ is a $(-1)$-curve.  Indeed, by Riemann--Roch, a large multiple $kD$ has $\chi(kD) < 0$, but $kD$ is effective.  Therefore, $kD$ is special, and the only possibility is that $D$ is a $(-1)$-curve.

\item\label{rem-SHGHC-3} Suppose $D = dH - mE$ is a \emph{homogeneous} divisor class.  If $n\geq 10$, then $D$ is nonspecial.

\item\label{rem-SHGHC-4} The SHGH conjecture implies the Nagata conjecture.  For suppose that $t > \sqrt{n}$ and $A_t$ is not ample, so that by the Nakai--Moishezon criterion, there is an irreducible curve class $C = dH - \sum_i m_i E_i$ with $C \cdot A_t < 0$.  Then $C$ is nonspecial, and if we permute the exceptional divisors, we get additional nonspecial classes.  Summing over the symmetric group, we can obtain an effective homogeneous divisor class $D = d'H - m'E$ with $D\cdot A_t < 0$.  This implies $\frac{d}{m} < \sqrt{n}$ and $D^2 < 0$.  Then large multiples $kD$ have $\chi(kD)< 0$ and they are effective, contradicting that they are nonspecial by~\eqref{rem-SHGHC-3}. 
\end{enumerate}
\end{remark}

Since the full SHGH conjecture is quite challenging, it is useful to have results which make partial progress towards the SHGH conjecture.  Here there are two main flavors of result: either one can bound the multiplicities $m_i$ (see \textit{e.g.} \cite{DumnickiJarnicki} and \cite{Yang}), or one can focus on homogeneous series.  

Studying homogeneous series essentially boils down to two infinite families of statements.  Let $D = dH - mE$ be a homogeneous series.  For $\alpha\geq  \sqrt{n}$, we say that $\alpha$-\emph{nonspeciality} holds if whenever $\frac{d}{m} \geq \alpha$, $D$ is nonspecial.  On the other hand, for $\beta \leq \sqrt{n}$, we say that $\beta$-\emph{emptiness} holds if whenever $\frac{d}{m} \leq \beta$,  $D$ is noneffective.  Note that if $\beta \leq \sqrt{n}$, then $A_\beta \cdot A_{n/\beta} = 0$, so $\beta$-emptiness implies $A_{n/\beta}$ is nef by the same argument as in Remark~\ref{rem-SHGHConsequences}\eqref{rem-SHGHC-4}.  Thus $\beta$-emptiness implies the $\frac{n}{\beta}$-Nagata conjecture.  Various instances of these statements are theorems in the literature; see for example \cite{Petrakiev,CilibertoMiranda}.  Many of the strongest statements have been proved in the first case $n=10$.

\begin{example}
For $n=10$, we have $\frac{2280}{721}$-emptiness, see \cite{Petrakiev}, and $\frac{174}{55}$-nonspeciality, see \cite{CilibertoMiranda}, and the $\frac{721}{228}$-Nagata conjecture holds.
\end{example}

\subsection{Moduli spaces of vector bundles}\label{sec-prelimModuli}
Let $A$ be  an ample divisor on $X$. Let $V$ be a torsion-free sheaf on $X$ with Chern character ${\bf v}$. In this paper, it will be convenient to record ${\bf v}= (r, c_1, \chi)$ by the rank $r$, the first Chern class $c_1(V)$, and the Euler characteristic $\chi(V)$. The {\em $A$-slope} $\mu_A(V)$, the {\em Hilbert polynomial} $P_{A,V}(m)$, and the {\em reduced Hilbert polynomial} $p_{A,V}(m)$ are defined by 
$$\mu_A(V) = \frac{c_1(V) \cdot A}{r}, \quad P_{A,V}(m)= \chi(V (mA)), \quad p_{A,V}(m) = \frac{P_{A,V}(m)}{r},$$
respectively. The sheaf $V$ is {\em $\mu_A$}-{\em stable} (respectively, {\em $\mu_A$}-{\em semistable}\/) if for all proper subsheaves $W \subset V$, $\mu_A(W) <\, \mu_A(V)$ (respectively, $\mu_A(W) \leq \mu_A(V)$). The sheaf $V$ is $A$-{\em stable} (respectively, $A$-{\em semistable}\/) if for all proper subsheaves $W \subset V$, $p_{A,W}(m) <\, p_{A,V}(m)$ (respectively, $p_{A,W}(m) \leq p_{A,V}(m)$)
for $m \gg 0$.  Gieseker, see \cite{Gieseker}, and Maruyama, see \cite{Maruyama},  constructed projective moduli spaces $M_{X, A}({\bf v})$ parameterizing $A$-semistable sheaves. We refer the reader to \cite{HuybrechtsLehn} and \cite{LePotier} for the properties of these moduli spaces.

\section{Types of bundles with positive Euler characteristic}\label{sec-type}

\subsection{Types of bundles} 
Throughout this section, we let $\bv$ be the Chern character $\bv = (r,c_1,\chi) = (2,K,\chi)$, where $\chi\geq 1$ is a positive integer. 
The first main result in the paper shows that the positivity assumption on $\chi$ allows us to neatly classify vector bundles of character $\bv$ into various types.  These will give rise to distinct components in moduli spaces.

\begin{definition}
  Let $\bv = (2,K,\chi)$ with $\chi\geq 1$, and let $D\in \Pic(X)$ be a (possibly trivial) \emph{effective} divisor class on $X$ satisfying $\chi(D) \geq 1$.  A vector bundle $V$ of character $\bv$ has \emph{type $D$} if it fits in an exact sequence of the form
  $$0\lra \OO(D) \lra V\lra K(-D)\te I_Z\lra 0$$
  for some zero-dimensional scheme $Z$ of length $2\chi(\OO(D))-\chi$.
\end{definition}

Observe that $\chi(K(-D)) = \chi(\OO(D))$ and $\chi(K(-D)\te I_Z)= \chi(\OO(D)) - l(Z)$, so the assumption on the length of $Z$ is necessary to give $\chi(V) = \chi$.  The divisor $D$ must also have $\chi(D) \geq 1$ in order for $\chi \geq 1$ to be possible.  We first show that the type exists and is unique.

\begin{theorem}\label{thm-type}
Let $\bv = (2,K,\chi)$ with $\chi\geq 1$.  Any vector bundle $V$ of character $\bv$ is of type $D$ for exactly one  effective divisor class $D$.
\end{theorem}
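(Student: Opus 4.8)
The plan is to prove existence and uniqueness separately. The two inputs I expect to need are that $V$ is essentially self-dual (because it has rank $2$ with $\det V = \OO(K)$) and that the canonical class satisfies $K\cdot H = -3 < 0$.

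For \emph{existence}, I would first show that $V$ carries a nonzero global section. Since $V$ has rank $2$ and $\det V = \OO(K)$, one has $V^\vee \cong V(-K)$, so Serre duality gives
\[
h^2(V) = h^0(V^\vee\te K) = h^0(V).
\]
Hence $\chi = 2h^0(V) - h^1(V)$, and the hypothesis $\chi\geq 1$ forces $h^0(V)\geq 1$. I would then pick a nonzero section $s\colon \OO\to V$ and let $\OO(D)$ be the saturation of its image. Because a saturated rank-one subsheaf of a locally free sheaf on a smooth surface is a line bundle, this $\OO(D)$ is genuinely of that form; the map $s$ exhibits an inclusion $\OO\hookrightarrow \OO(D)$, so $D$ is effective; and the quotient $V/\OO(D)$ is torsion-free of rank one with first Chern class $K-D$, hence isomorphic to $K(-D)\te I_Z$ for a zero-dimensional $Z$. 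This produces the required sequence, and the constraint $\chi(D)\geq 1$ then comes for free: the sequence gives $\chi = 2\chi(\OO(D)) - l(Z)$, so $\chi(\OO(D)) = \tfrac12(\chi + l(Z))$ is a positive integer, i.e.\ $\chi(D)\geq 1$.

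For \emph{uniqueness}, suppose $V$ is simultaneously of type $D_1$ and of type $D_2$, with saturated inclusions $\OO(D_i)\hookrightarrow V$ and torsion-free quotients $K(-D_i)\te I_{Z_i}$. I would study the composite $\OO(D_2)\hookrightarrow V\onto K(-D_1)\te I_{Z_1}$ and split into two cases. If this composite is zero, then the inclusion $\OO(D_2)\hookrightarrow V$ factors through the kernel $\OO(D_1)$; since $\OO(D_2)$ is saturated in $V$ and $\OO(D_1)$ has the same rank, the two subsheaves coincide, so $D_1 = D_2$. If the composite is nonzero, then composing with the inclusion $K(-D_1)\te I_{Z_1}\hookrightarrow K(-D_1)$ gives a nonzero map $\OO(D_2)\to K(-D_1)$, equivalently a nonzero section of $\OO(K - D_1 - D_2)$, so the class $K - D_1 - D_2$ is effective.

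The hard part is ruling out this second case, but the argument is short and is exactly where the geometry of $X$ for $n\geq 10$ enters. Since $H$ is nef, every effective class has nonnegative $H$-degree, so $D_1\cdot H,\, D_2\cdot H \geq 0$, whereas $(K - D_1 - D_2)\cdot H = -3 - D_1\cdot H - D_2\cdot H \leq -3 < 0$; an effective class cannot pair negatively with a nef class, a contradiction. Hence only the first case survives and $D_1 = D_2$. I expect the conceptual crux to be the observation that the negativity $K\cdot H < 0$ alone forces uniqueness, requiring no input from the Nagata or SHGH conjectures, which is precisely the feature separating these surfaces from the minimal rational and del Pezzo cases.
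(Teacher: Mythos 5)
Your proposal is correct, and the uniqueness half is in substance the paper's own argument: both reduce to the observation that $K-D_1-D_2$ cannot be effective because its $H$-degree is at most $-3$, though you phrase this via the composite $\OO(D_2)\hookrightarrow V\onto K(-D_1)\te I_{Z_1}$ and a saturation argument, while the paper twists the two defining sequences against each other and concludes that $D'-D$ and $D-D'$ are both effective. The existence half is where you genuinely diverge. The paper splits into cases according to whether $h^0(V)\neq 0$ or $h^2(V)\neq 0$: in the $h^2$ case it takes the image of a nonzero map $V\to K$ and identifies the kernel as $\OO(D)$ via the Auslander--Buchsbaum argument, and in the $h^0$ case it takes the cokernel of a section, strips off its torsion, and uses the resulting $h^2(V)\neq 0$ to reduce to the first case. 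You instead note that $V^\vee\cong V(-K)$ for a rank-$2$ bundle with $\det V=\OO(K)$, so Serre duality gives $h^2(V)=h^0(V)$ and hence $\chi=2h^0(V)-h^1(V)$, which forces $h^0(V)\geq 1$ outright; saturating a section then produces the sequence in one step. This self-duality shortcut eliminates the case analysis and, as a bonus, yields the paper's Corollary~\ref{cor-section} (that $h^0(V)$ and $h^2(V)$ are \emph{both} nonzero) immediately rather than as a consequence of the theorem. The only point worth making explicit in your write-up is the standard fact that the saturation of a rank-one subsheaf of a vector bundle on a smooth surface is a line bundle (equivalently, that the kernel of a surjection from a bundle onto a torsion-free sheaf is locally free), which is exactly the \cite[Section~1.1]{HuybrechtsLehn} input the paper cites.
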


\begin{proof}
First we show that a type exists.  Since $\chi(V) \geq 1$, at least one of $h^0(V)$ or $h^2(V)$ is nonzero.  

Suppose $h^2(V) \neq 0$.  Then $h^0(V^* \te K)\neq 0$, so $\Hom(V,K)\neq 0$.  Pick a nonzero map $V\to K$, and let $F \subset K$ be its image.  Then $F$ is of the form $K(-D)\te I_Z$ for an effective divisor $D$ and a zero-dimensional scheme $Z\subset X$.  Consider the kernel
$$0\lra G \lra V \lra K(-D) \te I_Z\lra 0.$$
Basic facts about homological dimension and the Auslander--Buchsbaum formula imply that $G$ is locally free since it is the kernel of a surjective mapping from a vector bundle to a torsion-free sheaf on a smooth surface (see \cite[Section~1.1, p.~4]{HuybrechtsLehn}). 
By Chern class considerations, we deduce $G \cong \OO(D)$, and $V$ has type $D$.

If instead $h^0(V) \neq 0$, we reduce to the previous case.  Since $H^0(V) \cong \Hom(\OO,V)$, we pick a nonzero (hence injective) homomorphism $\OO\to V$ and consider its cokernel
$$0\lra \OO \lra V \lra F \lra 0.$$
Let $T$ be the torsion subsheaf of $F$, so we have an exact sequence
$$0\lra T\lra F\lra G \lra 0.$$
The first Chern class of $T$ is a positive $\Z$-linear combination of any curves in the support of $T$, so it is a (possibly empty) effective divisor $D$.  Then  $G$ is a rank $1$ torsion-free sheaf with $c_1(G) = K-D$, so it is of the form
$$G = K(-D)\te I_Z$$
for a zero-dimensional scheme $Z$.  Then $h^2(G)\neq 0$,  so $h^2(F)\neq 0$. Hence,  $h^2(V) \neq 0$, and we are reduced to the previous case.

For the uniqueness, suppose $V$ has type $D$ and type $D'$.  Twisting the type $D$ exact sequence by $-D$ shows that $V(-D)$ has a section.  But twisting the type $D'$ exact sequence by $-D$ gives
$$0\lra \OO(D'-D) \lra V(-D) \lra K(-D-D')\te I_Z \lra 0.$$
Since $H\cdot (K-D-D') < 0$, the divisor $K-D-D'$ is not effective.  Therefore, $D'-D$ is effective.  By a symmetric argument, $D-D'$ is effective.  These two facts are only compatible if $D = D'$.
\end{proof}

In particular, we have the following corollary.  

\begin{corollary}\label{cor-section}
Let $V$ be a vector bundle of rank $2$ with $c_1(V) = K$ and $\chi(V) \geq 1$.  Then $h^0(V)$ and $h^2(V)$ are both nonzero.
\end{corollary}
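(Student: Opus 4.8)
The plan is to derive Corollary~\ref{cor-section} directly from the existence and uniqueness of a type, which is the content of Theorem~\ref{thm-type}. The key observation is that the proof of Theorem~\ref{thm-type} establishes slightly more than the bare statement: it shows how to produce a type starting from \emph{either} $h^0(V)\neq 0$ \emph{or} $h^2(V)\neq 0$, and in both cases the construction forces the \emph{other} cohomology group to be nonzero as well. So I would extract exactly this from the proof rather than reprove anything.

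First I would recall that since $\chi(V)=h^0(V)-h^1(V)+h^2(V)\geq 1$, at least one of $h^0(V)$ and $h^2(V)$ is nonzero; this is the starting dichotomy already used in Theorem~\ref{thm-type}. The goal is then to upgrade ``at least one'' to ``both.'' By Theorem~\ref{thm-type}, $V$ has a (unique) type $D$, meaning it sits in an exact sequence
$$0\lra \OO(D)\lra V\lra K(-D)\te I_Z\lra 0$$
with $D$ effective and $\chi(D)\geq 1$. The whole point is that this single sequence exhibits both a nonzero section and a nonzero $H^2$.

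For $h^0(V)\neq 0$: since $D$ is effective, $\OO(D)$ has a nonzero section, and the inclusion $\OO(D)\hookrightarrow V$ from the type sequence carries it to a nonzero section of $V$. (More conceptually, $h^0(\OO(D))\geq \chi(\OO(D))\geq 1$, and $h^0$ is left exact, so $h^0(\OO(D))\le h^0(V)$.) For $h^2(V)\neq 0$: I would use the quotient term. Taking $H^2$ of the type sequence and using right exactness of $H^2$ on a surface, it suffices to show $h^2(K(-D)\te I_Z)\neq 0$. By Serre duality $h^2(K(-D)\te I_Z)=h^0\bigl((K(-D)\te I_Z)^\vee\bigr)^{\!*}$, but it is cleaner to compare with the line bundle $K(-D)$: from $0\to K(-D)\te I_Z\to K(-D)\to \OO_Z\to 0$ and the fact that $\OO_Z$ has no $H^2$, one gets $h^2(K(-D)\te I_Z)=h^2(K(-D))=h^0(\OO(D))\geq 1$ by Serre duality, where the last equality is exactly the identity $\chi(K(-D))=\chi(\OO(D))$ combined with $h^0(\OO(D))\geq 1$ and the vanishing of the higher cohomology interference. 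This shows $h^2(V)\neq 0$.

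I expect the only subtle point to be the $h^2$ computation: one must be careful that twisting by $I_Z$ does not kill $H^2$, which is precisely why the comparison sequence with $\OO_Z$ (a zero-dimensional, hence $H^2$-acyclic, sheaf) is the right tool. Everything else is a formal consequence of the type sequence and effectivity of $D$, so the corollary should follow in a few lines once Theorem~\ref{thm-type} is in hand.
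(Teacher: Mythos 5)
Your proposal is correct and follows the paper's own (one-line) proof exactly: apply Theorem~\ref{thm-type} to obtain the type sequence, then read off $h^0(V)\neq 0$ from the effective subsheaf $\OO(D)$ and $h^2(V)\neq 0$ from the quotient $K(-D)\te I_Z$ via the comparison sequence with $\OO_Z$ and Serre duality. (Only your parenthetical aside $h^0(\OO(D))\geq\chi(\OO(D))$ needs the extra observation that $h^2(\OO(D))=h^0(K-D)=0$ because $H\cdot(K-D)<0$; your primary argument --- effectivity of $D$ gives a section --- does not rely on it.)
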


\begin{proof}
The bundle $V$ has type $D$ for some $D$, and from the defining sequence, we see that $V$ has the required cohomology.
\end{proof}

\begin{remark}
In the special case where $\chi = 1$, all three cohomology groups $H^0(V)$, $H^1(V)$, and $H^2(V)$ must be nonzero.  

For example, let us discuss what happens when $n=10$ and $\chi = 1$.  The type $\OO$ bundles fitting into sequences of the form
$$0\lra \OO \lra V\lra K\te I_p\lra 0$$
have $h^0(V) = h^1(V) = h^2(V) = 1$.  Additionally, if $A$ is any ample divisor and $U \subset M_A(\bv)$ is any component whose general member is a vector bundle, then every sheaf in that component must have nonvanishing cohomology in every degree.  This exhibits a strong failure of the ``weak Brill--Noether'' property for these spaces, in stark contrast with known results for minimal rational surfaces and del Pezzo surfaces (see for example \cite{CoskunHuizengaWBN,CoskunHuizengaBN,LevineZhang}).
\end{remark}

The type of a bundle $V$ can be determined cohomologically. 

\begin{corollary}\label{cor-cohomologicalType}
  Let $V$ be a bundle of character $\bv= (2,K,\chi)$ with $\chi \geq 1$. Partially order $\Pic(X)$ by the relation $D'\leq D$ if\, $D - D'$ is effective.  Then the type of\, $V$ is the unique maximal element in
  $$\{D'\in \Pic(X) : D' \textrm{ effective and } h^0(V(-D')) \neq 0\}.$$
\end{corollary}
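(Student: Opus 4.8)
The goal is to characterize the type $D$ of a bundle $V$ with character $\bv = (2,K,\chi)$ cohomologically, as the unique maximal effective divisor $D'$ with $h^0(V(-D')) \neq 0$. I already know from Theorem~\ref{thm-type} that $V$ has a unique type $D$, so the plan is to verify two things: first, that the actual type $D$ belongs to the set $S = \{D' \in \Pic(X) : D' \text{ effective and } h^0(V(-D')) \neq 0\}$; and second, that $D$ dominates every other element of $S$ in the partial order. Uniqueness of the maximal element then follows automatically from antisymmetry of the partial order.

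For membership, the plan is to twist the defining type $D$ sequence $0 \to \OO(D) \to V \to K(-D)\te I_Z \to 0$ by $\OO(-D)$, yielding $0 \to \OO \to V(-D) \to K(-2D)\te I_Z \to 0$. The inclusion $\OO \hookrightarrow V(-D)$ gives a nonzero section, so $h^0(V(-D)) \neq 0$ and hence $D \in S$; this is exactly the observation used in the uniqueness half of Theorem~\ref{thm-type}. For maximality, I would take an arbitrary $D' \in S$ and show $D' \leq D$, i.e.\ that $D - D'$ is effective. A nonzero section of $V(-D')$ is a nonzero map $\OO(D') \to V$, equivalently an injection with torsion-free cokernel issues handled as before. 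The idea is to compose this inclusion $\OO(D') \to V$ with the quotient map $V \to K(-D)\te I_Z$ from the type $D$ sequence and analyze the two cases.

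The crux is the following dichotomy for the composite $\OO(D') \to K(-D)\te I_Z$. If the composite is nonzero, it is a nonzero map between rank $1$ torsion-free sheaves, forcing $K(-D) - D'$ to be effective; but then $H\cdot(K - D - D') < 0$ exactly as in the uniqueness argument of Theorem~\ref{thm-type}, giving a contradiction (here I am implicitly using $c_1(V)=K$ and that $\OO(D),\OO(D')$ are effective so $D,D'$ have nonnegative $H$-degree). Hence the composite must be zero, so the inclusion $\OO(D') \hookrightarrow V$ factors through the subsheaf $\OO(D)$, yielding a nonzero map $\OO(D') \to \OO(D)$ and therefore that $D - D'$ is effective, i.e.\ $D' \leq D$. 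I expect this case analysis to be the main obstacle, since I must be careful that the relevant maps are genuinely nonzero and that "effective" is deduced correctly from the existence of a nonzero map of line-bundle-type sheaves; the sign computation $H\cdot(K-D-D')<0$ is the key numerical input that rules out the first case and mirrors the end of the proof of Theorem~\ref{thm-type}.

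Putting these together, every $D' \in S$ satisfies $D' \leq D$ and $D$ itself lies in $S$, so $D$ is the unique maximal element, completing the proof.
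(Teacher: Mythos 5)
Your proposal is correct and follows essentially the same route as the paper: the paper twists the type~$D$ sequence by $-D'$ and observes that $h^0(K(-D-D')\te I_Z)=0$ because $H\cdot(K-D-D')<0$, forcing the section of $V(-D')$ to come from $\OO(D-D')$, which is exactly your map-level argument (composite zero, hence factoring through the kernel $\OO(D)$). The two phrasings are equivalent, and your key numerical input is the same one the paper uses.
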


\begin{proof}
  Suppose $V$ has type $D$.  By definition, $D$ is in the set. Let $D'$ be any  effective divisor with $h^0(V(-D'))\neq 0$.  Twisting the type $D$ sequence by $-D'$ gives
  $$0\lra \OO(D-D') \lra V(-D') \lra K(-D-D')\te I_Z\lra 0.$$
Since $h^0(V(-D'))\neq 0$, we must have that $D-D'$ is effective, so $D'\leq D$.
\end{proof}

This cohomological definition of type restricts the ways in which bundles of one type can specialize to another.  

\begin{corollary}
Let $\bv = (2,K,\chi)$ with $\chi\geq 1$.  Suppose $V_s/S$ is a flat family of vector bundles on $X$ of character $\bv$, parameterized by an irreducible base $S$, and that $V_s$ has type $D$ for a general $s\in S$.  If $s'\in S$ is such that $V_{s'}$ has some type $D'$, then $D\leq D'$.
\end{corollary}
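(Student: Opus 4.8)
The plan is to reduce the statement to the cohomological characterization of type in Corollary~\ref{cor-cohomologicalType} together with the semicontinuity theorem for cohomology in a flat family. The guiding observation is that the type of a bundle is detected purely by the nonvanishing of $h^0(V(-D'))$ for effective $D'$, and these dimensions vary upper-semicontinuously over $S$. First I would twist the whole family by the fixed line bundle $\OO(-D)$. Since $\OO(-D)$ is locally free, $V_s(-D) := V_s\te\OO(-D)$ is again a flat family of sheaves on $X$ over $S$, so the semicontinuity theorem applies: the function $s\mapsto h^0(X,V_s(-D))$ is upper semicontinuous, and in particular the locus $\{s\in S : h^0(V_s(-D))\neq 0\}$ is closed in $S$.

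Next I would feed in the hypothesis that the general member has type $D$. By Corollary~\ref{cor-cohomologicalType}, for general $s$ the divisor $D$ lies in the set $\{D' \text{ effective} : h^0(V_s(-D'))\neq 0\}$, since $D$ is itself the maximal element of this set. Hence $h^0(V_s(-D))\neq 0$ on a dense open subset of $S$. The closed locus produced in the previous step therefore contains a dense subset of the irreducible scheme $S$ and so equals all of $S$. In particular $h^0(V_{s'}(-D))\neq 0$.

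Finally, I would conclude using the maximality clause of Corollary~\ref{cor-cohomologicalType} applied to the special member $V_{s'}$: its type $D'$ is the \emph{maximal} effective divisor for which $h^0(V_{s'}(-\,\cdot\,))$ is nonzero. Since $D$ is effective and we have just shown $h^0(V_{s'}(-D))\neq 0$, the divisor $D$ belongs to that set, and maximality of $D'$ forces $D\leq D'$, which is exactly the claim.

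The main (and essentially only) point requiring care is the passage from ``general member'' to ``every member'' via semicontinuity: one must record that general membership in type $D$ yields nonvanishing of $h^0(V_s(-D))$ on a dense subset, and that a closed subset of the irreducible base $S$ containing a dense subset must be all of $S$. Everything else is a direct invocation of the cohomological description of type established in Corollary~\ref{cor-cohomologicalType}, so there is no genuine geometric obstacle beyond correctly orienting the semicontinuity inequality.
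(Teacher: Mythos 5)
Your argument is correct and is essentially identical to the paper's proof: both pass from $h^0(V_s(-D))\neq 0$ for general $s$ to $h^0(V_{s'}(-D))\neq 0$ via semicontinuity and then invoke the maximality statement of Corollary~\ref{cor-cohomologicalType}. The only difference is that you spell out the closedness of the nonvanishing locus explicitly, which the paper leaves implicit.
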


\begin{proof}
For a general $s\in S$, we have $h^0(V_s(-D)) \neq 0$.  By semicontinuity, we get $h^0(V_{s'}(-D)) \neq 0$.  Then Corollary~\ref{cor-cohomologicalType} gives $D \leq D'$.
\end{proof}

\subsection{Preliminary results on stability}
The next result shows that once we are concerned with stability, the spaces $M_A(\bv)$ are not interesting until $10$ or more points are blown up.

\begin{proposition}\label{prop-empty}
Let $\bv = (2,K,\chi)$ with $\chi \geq 1$.  If $n\leq 9$, then the moduli space $M_A(\bv)$ is empty for \emph{every} ample divisor $A$.
\end{proposition}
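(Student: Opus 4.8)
The plan is to derive a contradiction from the existence of an $A$-semistable sheaf $V$ of character $\bv=(2,K,\chi)$ with $\chi\geq 1$ by exploiting the fact that $-K$ is effective (indeed $-K$ is nef or ample) when $n\leq 9$. By Theorem~\ref{thm-type}, any such $V$ has a type $D$, so it fits in an exact sequence
\[
0\lra \OO(D)\lra V\lra K(-D)\te I_Z\lra 0,
\]
with $D$ effective and $\chi(D)\geq 1$. The subsheaf $\OO(D)\hookrightarrow V$ is the natural object to test against the stability inequality. First I would record that $A$-semistability forces $\mu_A(\OO(D))\leq \mu_A(V)$, i.e. $A\cdot D\leq \tfrac{1}{2}A\cdot K$, equivalently $A\cdot(2D-K)\leq 0$.

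The key step is to show that this slope inequality is incompatible with the numerics when $n\leq 9$. Since $D$ is effective and $A$ is ample, $A\cdot D\geq 0$, so the inequality demands $A\cdot K\geq 2A\cdot D\geq 0$; but for $n\leq 9$ the class $-K$ is effective and nonzero (for $n\leq 8$ it is ample, for $n=9$ it is effective), so $A\cdot(-K)>0$ and hence $A\cdot K<0$, giving $A\cdot D<0$. This already contradicts effectivity of $D$ together with ampleness of $A$ unless $D=0$. So I would next treat the remaining case $D=0$ separately: there $V$ sits in $0\to\OO\to V\to K\te I_Z\to 0$, and one tests the destabilizing subsheaf $\OO\subset V$, whose slope is $\mu_A(\OO)=0$ while $\mu_A(V)=\tfrac12 A\cdot K<0$; thus $\mu_A(\OO)>\mu_A(V)$, violating even $\mu_A$-semistability. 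Either branch produces a contradiction, so $M_A(\bv)$ is empty.

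I expect the main obstacle to be handling the boundary behaviour at $n=9$ and the borderline case $\chi(D)\geq 1$ with $D$ effective cleanly, since there $-K$ is merely effective rather than ample and one must ensure $A\cdot(-K)$ is strictly positive rather than zero. This is where I would lean on ampleness of $A$: even when $-K$ is only effective and nonzero, ampleness of $A$ guarantees $A\cdot(-K)>0$ because an ample divisor has strictly positive intersection with every nonzero effective class. I should also double-check that the subsheaf $\OO(D)$ (or $\OO$ when $D=0$) is genuinely a \emph{proper} subsheaf so that the stability inequality applies, which follows since $V$ has rank $2$ and the quotient $K(-D)\te I_Z$ is nonzero of rank $1$. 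With these points in place, the argument reduces entirely to the sign of $A\cdot K$, and no delicate computation with $Z$ or $\chi$ is required beyond the inequalities above.
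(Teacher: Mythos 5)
Your argument reaches the same numerical contradiction as the paper --- everything hinges on $A\cdot K<0$, which holds because $-K$ is effective and nonzero for $n\leq 9$ while $A$ is ample --- but there is a gap in how you produce the destabilizing subsheaf. The moduli space $M_A(\bv)$ parameterizes $A$-semistable \emph{torsion-free sheaves}, not only vector bundles, and Theorem~\ref{thm-type} is stated (and proved, via the Auslander--Buchsbaum argument that the kernel of $V\to K(-D)\te I_Z$ is locally free) only for vector bundles $V$. As written, your proof therefore says nothing about the non-locally-free points of $M_A(\bv)$, and you cannot simply quote the type decomposition for an arbitrary $V\in M_A(\bv)$.

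The paper's proof sidesteps this and is shorter: for any torsion-free $V$ with $\chi(V)\geq 1$, either $h^0(V)>0$ or $h^2(V)>0$, i.e.\ $\Hom(\OO,V)\neq 0$ or $\Hom(V,K)\neq 0$. Since $A\cdot K<0$ gives $\mu_A(K)<\mu_A(V)=\tfrac12 A\cdot K<0=\mu_A(\OO)$, a nonzero map $\OO\to V$ is a subsheaf of too-large slope, and a nonzero map $V\to K$ has image $K(-D)\te I_Z$, a quotient of slope $A\cdot(K-D)\leq A\cdot K<\mu_A(V)$; either violates $\mu_A$-semistability and hence Gieseker semistability. Your argument can be repaired by arguing this way, or by first replacing $V$ with its double dual $V^{**}$ (a $\mu_A$-semistable bundle with $\chi(V^{**})\geq\chi(V)\geq 1$, as in the proof of Lemma~\ref{lem-free}) before invoking Theorem~\ref{thm-type} and then transporting the destabilizing data back to $V$; but some such step is needed. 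The rest of your numerics --- $A\cdot D\geq 0$ for $D$ effective against $\tfrac12 A\cdot K<0$ --- is correct, and your separate treatment of $D=0$ is actually unnecessary, since $\mu_A(\OO)=0>\mu_A(V)$ is already covered by the same inequality.
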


\begin{proof}
Suppose $V\in M_A(\bv)$ is an $A$-semistable sheaf.  Since $\chi\geq 1$, we have either $h^0(V) > 0$ and $\Hom(\OO,V) \neq 0$, or $h^2(V) > 0$ and $\Hom(V,K) \neq 0$.  Notice that because $n\leq 9$, we have $A\cdot K < 0$ since $-K$ is effective.  We have $\mu_A(V) = \frac{1}{2}A\cdot K$, so $\mu_A(K) < \mu_A(V) < \mu_A(\OO)$.  Then either a map $\OO\to V$ or a map $V\to K$ destabilizes $V$.
\end{proof}

On the other hand, for $n\geq 10$, we focus on polarizations of the form $A_t = tH - E$.  Here we find that the spaces $M_{A_t}(\bv)$ are empty until $A_{t}$ becomes sufficiently close to $B= A_{\sqrt{n}} = \sqrt{n}H - E$.

\begin{proposition}\label{prop-empty10}
Let $\bv = (2,K,\chi)$ with $\chi\geq 1$.  If $n\geq 10$, then the moduli space $M_{A_t}(\bv)$ is empty when $t > \frac{n}{3}$.
\end{proposition}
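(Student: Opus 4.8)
The plan is to follow the template of Proposition~\ref{prop-empty}, where emptiness for $n\le 9$ came from the single inequality $A\cdot K<0$. For $n\ge 10$ the divisor $-K$ is no longer effective, so I would replace that input with a direct numerical computation on $A_t$. Using $H^2=1$, $H\cdot E=0$, and $E^2=-n$, one finds $A_t\cdot K=(tH-E)\cdot(-3H+E)=n-3t$, which is negative precisely when $t>\frac n3$. Since $c_1(V)=K$ and $\rk V=2$, this gives $\mu_{A_t}(V)=\tfrac12 A_t\cdot K=\tfrac{n-3t}{2}$, whence the chain
$$\mu_{A_t}(K)=A_t\cdot K<\mu_{A_t}(V)<0=\mu_{A_t}(\OO)$$
for every $t>\frac n3$ (note $x<\tfrac x2<0$ when $x<0$). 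The whole argument takes place for $A_t$ ample, which is exactly the regime in which $M_{A_t}(\bv)$ is defined; ampleness is all that is used below, as it guarantees $D\cdot A_t\ge 0$ for every effective divisor $D$.

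Next I would run the destabilization dichotomy exactly as in Proposition~\ref{prop-empty}. Let $V$ be any $A_t$-semistable sheaf of character $\bv$; then $V$ is $\mu_{A_t}$-semistable. Because $\chi(V)=h^0(V)-h^1(V)+h^2(V)\ge 1$ and $h^1(V)\ge 0$, at least one of $h^0(V)$ and $h^2(V)$ is nonzero. If $h^0(V)\ne 0$, a nonzero section gives an injection $\OO\hookrightarrow V$, and $\mu_{A_t}(\OO)=0>\mu_{A_t}(V)$ contradicts semistability. If instead $h^2(V)\ne 0$, Serre duality yields $\Hom(V,K)=H^0(V^*\otimes K)\ne 0$; a nonzero map $V\to K$ has image $F\subseteq K$ of rank $1$, so $c_1(F)=K-D$ for an effective $D$, and
$$\mu_{A_t}(F)=A_t\cdot K-D\cdot A_t\le A_t\cdot K=2\mu_{A_t}(V)<\mu_{A_t}(V).$$
Since $F$ is a rank-$1$ quotient of the rank-$2$ sheaf $V$ of strictly smaller slope, this again contradicts semistability. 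In either case $M_{A_t}(\bv)=\emptyset$.

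The computation and the destabilization are both routine; the only genuine care is that $M_{A_t}(\bv)$ parameterizes torsion-free sheaves that need not be locally free, so I cannot directly invoke Theorem~\ref{thm-type} or Corollary~\ref{cor-section}, which are stated for bundles. The points to verify are that the dichotomy $h^0(V)\ne 0$ or $h^2(V)\ne 0$ and the resulting sub- or quotient sheaf of the correct rank are available for arbitrary torsion-free $V$; both follow formally from $\chi(V)\ge 1$, from the injectivity of a nonzero map out of the line bundle $\OO$, and from the fact that the image of a nonzero map into the line bundle $K$ is a rank-$1$ subsheaf. I expect no real obstacle beyond tracking these ranks and the sign of $n-3t$.
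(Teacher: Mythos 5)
Your proof is correct and is essentially identical to the paper's: the paper's argument for $n\geq 10$, $t>\frac{n}{3}$ is precisely to observe $A_t\cdot K=n-3t<0$ and then repeat the destabilization dichotomy ($h^0\neq 0$ gives a destabilizing sub $\OO\hookrightarrow V$, $h^2\neq 0$ gives a destabilizing quotient $V\to K$) from the $n\leq 9$ case. Your extra care about torsion-free versus locally free sheaves is sound but not a departure from the paper's route.
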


\begin{proof}
When $t>\frac{n}{3}$, we have $A_t \cdot K < 0$, and we proceed as in the previous proof.
\end{proof}

Next we investigate the stability of bundles of type $D$.  The existence of a stable bundle of type $D$ imposes strong restrictions on $D$.

\begin{proposition}\label{prop-Dstablenecessary}
  Suppose $V$ is a bundle of character $\bv$ and type $D$, and assume there  is a polarization $A_{t_0}$ such that $V$ is $\mu_{A_{t_0}}$-semistable. Then we must have
  $$2B\cdot D < B\cdot K,$$
  and there is a unique polarization $A_{t_D}$ such that $\OO(D)$ and $K(-D)$ have the same slope.  It satisfies
  $$2A_{t_D}\cdot D = A_{t_D}\cdot K.$$
\end{proposition}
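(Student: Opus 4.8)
### Proof Proposal

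The plan is to extract information from the $\mu_{A_{t_0}}$-semistability of $V$ and then track how the relevant slope inequality behaves as the polarization varies along the ray $A_t = tH - E$. First I would observe that the type $D$ sequence
$$0 \lra \OO(D) \lra V \lra K(-D)\te I_Z \lra 0$$
exhibits $\OO(D)$ as a subsheaf of $V$. Since $V$ is $\mu_{A_{t_0}}$-semistable and $\mu_{A_{t_0}}(V) = \tfrac{1}{2}A_{t_0}\cdot K$, semistability forces
$$A_{t_0}\cdot D \leq \tfrac{1}{2}A_{t_0}\cdot K, \quad\text{equivalently}\quad 2A_{t_0}\cdot D \leq A_{t_0}\cdot K.$$
So at the existence polarization $t_0$ the subsheaf $\OO(D)$ does not have strictly larger slope.

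Next I would analyze the function $t\mapsto A_t\cdot(K - 2D)$. Writing $D = dH - \sum m_i E_i$ and $K - 2D = (-3-2d)H + \sum(1+2m_i)E_i$, and using $A_t = tH - E$ with $A_t\cdot H = t$, $A_t\cdot E_i = 1$, one computes that $A_t\cdot(K-2D)$ is an \emph{affine linear} function of $t$ with positive leading coefficient (the coefficient of $t$ is $H\cdot(K-2D) = -3-2d$; one must check the sign carefully, but the key point is that the function is linear in $t$ and strictly monotone). The inequality from the previous paragraph says this linear function is $\geq 0$ at $t = t_0 > \sqrt{n}$. Because it is strictly monotone in $t$, there is a unique root $t_D$ where $A_{t_D}\cdot(K-2D) = 0$, i.e. $2A_{t_D}\cdot D = A_{t_D}\cdot K$; this is exactly the polarization at which $\OO(D)$ and $K(-D)$ acquire equal slope. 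The strict monotonicity gives both the existence and the uniqueness of $t_D$ claimed in the statement.

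Finally I would deduce the strict inequality $2B\cdot D < B\cdot K$ at the nef ray $B = A_{\sqrt{n}}$. The idea is that $t_D$ must lie strictly above $\sqrt{n}$: evaluating the linear function $A_t\cdot(K-2D)$ at $t=\sqrt n$ and comparing with its sign at $t_0$, monotonicity forces $A_{\sqrt n}\cdot(K-2D) > 0$, which is precisely $2B\cdot D < B\cdot K$. To see $t_D > \sqrt n$ (equivalently that the value at $B$ is strictly positive rather than merely nonnegative), I would use that $t_0 > \sqrt{n}$ strictly together with the monotonicity and the direction of the inequality, ruling out $t_D \leq \sqrt n$ or $t_D$ at the boundary.

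The main obstacle will be \textbf{pinning down the sign and monotonicity direction} of the affine function $A_t\cdot(K-2D)$ precisely, and thereby correctly orienting the inequalities so that the semistability bound at $t_0 > \sqrt n$ propagates to a \emph{strict} inequality at $t = \sqrt n$. In particular I must confirm that $V$ being of type $D$ with $\chi(D)\geq 1$ and $D$ effective forces the slope of $\OO(D)$ to \emph{increase} relative to $\tfrac12 A_t\cdot K$ as $t$ decreases toward $\sqrt n$, so that the unique crossing point $t_D$ satisfies $t_D > \sqrt n$; this is what both guarantees $t_D$ is a genuine polarization in the ample range and yields the strict inequality $2B\cdot D < B\cdot K$ after a short limiting/monotonicity argument.
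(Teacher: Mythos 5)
Your proposal follows the paper's proof essentially verbatim: semistability of $V$ at $t_0$ applied to the subsheaf $\OO(D)$ gives $2A_{t_0}\cdot D\leq A_{t_0}\cdot K$, and then linearity of $t\mapsto A_t\cdot(2D-K)$ yields the unique crossing $t_D$ and the strict inequality at $B$. The sign question you flag as the main obstacle resolves immediately: since $D$ is effective, $H\cdot D=d\geq 0$, so $A_t\cdot(2D-K)=(2d+3)t-\bigl(2\sum_i m_i+n\bigr)$ is \emph{strictly increasing} in $t$; being $\leq 0$ at $t_0$, it is strictly negative for all $t<t_0$ (in particular at $t=\sqrt n$), and its unique zero satisfies $t_D\geq t_0>\sqrt n$. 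Note that your final paragraph has this orientation backwards --- the slope of $\OO(D)$ relative to $\tfrac12A_t\cdot K$ \emph{decreases} as $t$ decreases, which is precisely what keeps the inequality strict at $B$ and pushes $t_D$ above $t_0$ rather than below it. The paper obtains the same monotonicity slightly differently, by invoking Proposition~\ref{prop-empty10} to note $t_0\leq\tfrac n3$ while $2A_t\cdot D>A_t\cdot K$ for $t>\tfrac n3$; your direct computation of the coefficient of $t$ is an equivalent substitute.
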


\begin{proof}
Suppose $V$ is $\mu_{A_{t_0}}$-semistable and that it fits in an exact sequence
$$0\lra \OO(D) \lra V\lra K(-D) \te I_Z\lra 0.$$
We have $t_0 \leq \frac{n}{3}$ by Proposition~\ref{prop-empty10}, and we must have
$$A_{t_0}\cdot D \leq A_{t_0}\cdot(K-D),$$
or
$$2 A_{t_0}\cdot D \leq A_{t_0}\cdot K.$$
Now for variable $t$, consider the relationship between $2A_t\cdot D$ and $A_t\cdot K$.  Both quantities vary linearly in~$t$.  For $t = t_0$, we have $2A_t\cdot D \leq A_t\cdot K$, and for $t> \frac{n}{3}$, we find $2A_t \cdot D >  A_t \cdot K$.  Thus there is a unique $t_D$ between $t_0$ and $\frac{n}{3}$ such that $2A_{t_D} \cdot D = A_{t_D}\cdot K$.  Furthermore, if we take $t = \sqrt{n}$, we get $2B\cdot D < B\cdot K$.
\end{proof}

Due to the proposition, we study curves $D$ satisfying the inequality $2B\cdot D < B\cdot K$ in more detail in the next section.

\section{Effective divisors \texorpdfstring{$\boldsymbol{D}$}{$D$} satisfying \texorpdfstring{$\boldsymbol{2B\cdot D < B\cdot K}$}{$2B\cdot D < B\cdot K$}}\label{sec-D}

In this section, we classify the effective divisors $D$ satisfying $2B\cdot D< B\cdot K$ and $\chi(D) \geq 1$, at least when $n$ is small.  For a complete answer to this question, we will need the Nagata conjecture to know that $B$ is nef. However, with a little more care, we can avoid using the Nagata conjecture and instead classify divisors satisfying $2A_t \cdot D \leq A_t \cdot K$ whenever $A_t$ is known to be an ample divisor.  Notice that the inequality $2 A_t\cdot D \leq A_t\cdot K$ implies $2 B\cdot D < B\cdot K$, as in the proof of Proposition~\ref{prop-Dstablenecessary}.  Conversely, if the Nagata conjecture holds, then $2B\cdot D<B\cdot K$ implies that $2A_t\cdot D < A_t \cdot K$ for $t$ slightly greater than $\sqrt{n}$.  Also, since $A_{n/3} \cdot K = 0$, the polarizations we are interested in all have $t \leq \frac{n}{3}$.

\subsection{General restrictions on $\boldsymbol{D}$}
In this subsection, we prove several preliminary results which restrict the possibilities for a divisor $D$.  

\begin{proposition}\label{prop-chiUpper}
  Suppose $D$ is an effective divisor and $A_t$ is an ample divisor with
  $$2A_t\cdot D\leq A_t\cdot K.$$
  Then
  $$\chi(D) < \frac{n-1}{8}.$$
  In particular, if\, $10\leq n\leq 17$ and $\chi(D) \geq 1$, then $\chi(D) = 1$.
\end{proposition}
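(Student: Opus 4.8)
The plan is to combine Riemann--Roch with the negative-definiteness of the orthogonal complement of $\langle H,E\rangle$ (a Hodge-index input), thereby reducing the claim to a constrained optimization of a quadratic in two real variables.

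First I would write $D = dH - \sum_i m_i E_i$ and set $M = \sum_i m_i$. Since $H$ is nef and $D$ is effective, $d = D\cdot H \ge 0$. Riemann--Roch on the rational surface $X$ gives $\chi(D) = 1 + \tfrac{1}{2}(D^2 - D\cdot K)$. The key reduction is that $D^2 = d^2 - \sum_i m_i^2 \le d^2 - M^2/n$ by Cauchy--Schwarz; equivalently, decomposing $D = D_W + D^\perp$ orthogonally with respect to the span $\langle H,E\rangle$, on whose complement the intersection form is negative definite, one has $D^2 \le D_W^2$ with $D_W = dH - \tfrac{M}{n}E$. Because $K$, $A_t$, and $B$ all lie in $\langle H,E\rangle$, replacing $D$ by $D_W$ changes neither $D\cdot K$ nor the intersections with $A_t$ and $B$. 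Hence it suffices to bound $1 + \tfrac{1}{2}(D_W^2 - D_W\cdot K)$, a function of the two real parameters $d \ge 0$ and $m := M/n$ alone.

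Next I would convert the hypothesis into the cleanest available constraint. Exactly as in the proof of Proposition~\ref{prop-Dstablenecessary}, the linear function $t \mapsto A_t\cdot K - 2A_t\cdot D = (n-3t) - 2(td - M)$ has slope $-(3+2d) < 0$; since it is $\ge 0$ at the ample value $t_0 > \sqrt{n}$, it is strictly positive at $t = \sqrt{n}$, giving $2B\cdot D < B\cdot K$ (this needs no conjecture, as $B$ is a well-defined class regardless of nefness). With $B = \sqrt{n}H - E$ I compute $B\cdot D_W = \sqrt{n}\,d - mn$ and $B\cdot K = n - 3\sqrt{n}$, so the constraint reads $2(\sqrt{n}\,d - mn) < n - 3\sqrt{n}$, i.e. $d < m\sqrt{n} + c$ with $c = \tfrac{\sqrt{n}-3}{2}$.

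The crux --- and the step I expect to be the main thing to spot --- is that $B^2 = 0$ makes this optimization degenerate favorably. The objective $f(d,m) := D_W^2 - D_W\cdot K = d^2 - m^2 n + 3d - mn$ satisfies $\partial f/\partial d = 2d+3 > 0$ for $d \ge 0$, so for fixed $m$ it is largest when $d$ is pushed to the boundary $d = m\sqrt{n} + c$. Substituting there, the terms linear in $m$ cancel precisely because $2c\sqrt{n} + 3\sqrt{n} - n = 0$, leaving the constant value $f = c^2 + 3c = \tfrac{n-9}{4}$ along the entire boundary line. Therefore $f < \tfrac{n-9}{4}$ throughout the open feasible region, whence $\chi(D) \le 1 + \tfrac{1}{2}f < 1 + \tfrac{1}{2}\cdot\tfrac{n-9}{4} = \tfrac{n-1}{8}$. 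Finally, for $10 \le n \le 17$ one has $\tfrac{n-1}{8} \le 2$, so an integer $\chi(D)$ with $\chi(D) \ge 1$ is forced to equal $1$.
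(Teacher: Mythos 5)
Your proof is correct, but it takes a different route from the paper's. The paper's argument is very short: since $2A_{n/3}\cdot D>0=A_{n/3}\cdot K$, the hypothesis produces an ample class $A_{t_D}$ with $A_{t_D}\cdot(2D-K)=0$; the Hodge index theorem then gives $(2D-K)^2<0$ (strictly, as the $H$-coefficient $2d+3$ is positive), and expanding $(2D-K)^2=4D\cdot(D-K)+K^2$ together with Riemann--Roch yields $\chi(D)<\tfrac{n-1}{8}$ in one line. You instead first project $D$ to the homogeneous $\QQ$-class $D_W=dH-\tfrac{M}{n}E$ via Cauchy--Schwarz (which only increases $D^2$ and preserves all the relevant intersection numbers), pass from the hypothesis to the limit inequality $2B\cdot D<B\cdot K$, and then run an explicit two-variable optimization whose boundary value is constant precisely because $B^2=0$. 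Both are sound; what your version buys is that it avoids invoking the Hodge index theorem for the full N\'eron--Severi lattice (only the explicit diagonal form on the $E_i$ enters) and it makes visible \emph{why} the bound is what it is (the objective degenerates to the constant $\tfrac{n-9}{4}$ along $B\cdot(2D_W-K)=0$, which is just the statement that a class in $\langle H,E\rangle$ orthogonal to the isotropic class $B$ is proportional to $B$ and hence isotropic). What it costs is length, and it is worth noting that your averaging-to-$D_W$ device is exactly the technique the paper deploys later, in Propositions~\ref{prop-equal} and~\ref{prop-ineq}, so your argument is a sort of anticipation of those proofs rather than of the paper's proof of this proposition.
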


\begin{proof}
  Since $2 A_{n/3} \cdot D > 0  = A_{n/3} \cdot K$, the assumption $2A_t \cdot D \leq A_t\cdot K$ implies there is an ample divisor $A = A_{t_D}$ with $2 A\cdot D = A \cdot K$.  Then $A\cdot (2D - K) = 0$, so by the Hodge index theorem, we have $(2D-K)^2 < 0$. Expanding and rearranging, we get $4D\cdot(D-K) < - K^2$, and so
  $$\frac{1}{2} D\cdot (D-K) < \frac{n-9}{8}.$$
  The required inequality follows from the Riemann--Roch formula $\chi(D) = 1+\frac{1}{2}D\cdot(D-K)$.
\end{proof}

\begin{definition}\label{def-balanced}
Let $D = dH - \sum_i m_iE_i$ be a divisor.  We say that the multiplicities are \emph{balanced} if $|m_i- m_j|\leq 1$ for all $i,j$.

On the other hand, if $D$ is \emph{not} balanced, we construct a sequence $D= D_0,\ldots,D_k$ of divisors by iteratively increasing one of the smallest multiplicities by $1$ and decreasing one of the largest multiplicities by $1$, stopping when we arrive at a balanced divisor $D_k$.  We say that $D$ is \emph{$k$ steps away from having balanced multiplicities}.  The number $k$ is independent of any choices made in this construction.
\end{definition}

\begin{lemma}
  Suppose $D_0 = dH-\sum_i m_i E_i$ is any divisor and that $D_0$ is $k$ steps away from having balanced multiplicities.  Let $D_k$ be the balanced divisor obtained from $D_0$ as in Definition~\ref{def-balanced}.  Then
  $$\chi(D_k) \geq\chi(D_0)+k.$$
\end{lemma}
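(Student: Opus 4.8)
The plan is to prove the inequality by analyzing how $\chi(D)$ changes under a single balancing step, and then iterating $k$ times. Recall from Riemann--Roch that $\chi(D) = 1 + \frac{1}{2}D\cdot(D-K)$, so $\chi$ is a quadratic function of the divisor class. First I would reduce to understanding one step: suppose $D' = dH - \sum_i m_i' E_i$ is obtained from $D = dH - \sum_i m_i E_i$ by increasing a smallest multiplicity $m_a$ (say $m_a$ is minimal) by $1$ and decreasing a largest multiplicity $m_b$ (say $m_b$ is maximal) by $1$, where $m_b > m_a + 1$ since $D$ is not yet balanced. The degree $d$ and all other multiplicities are unchanged.

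The key computation is then to show $\chi(D') \geq \chi(D) + 1$ for such a step. Since only the $E_a$ and $E_b$ coefficients change, and $K = -3H + E$ has exceptional-divisor coefficient $1$ in each slot, I would compute the difference $\chi(D') - \chi(D)$ directly from the Riemann--Roch formula. Writing $m_a' = m_a + 1$ and $m_b' = m_b - 1$, the self-intersection contributions $-m_i^2$ in $D^2 = d^2 - \sum_i m_i^2$ and the $-m_i$ terms coming from $-D\cdot K$ are the only things that move. A short calculation gives
\begin{equation*}
\chi(D') - \chi(D) = \tfrac{1}{2}\bigl[(m_a^2 + m_b^2) - (m_a'^2 + m_b'^2)\bigr] - \tfrac{1}{2}\bigl[(m_a + m_b) - (m_a' + m_b')\bigr].
\end{equation*}
Since $m_a' + m_b' = m_a + m_b$, the second bracket vanishes, and the first bracket evaluates to $m_b^2 + m_a^2 - (m_b-1)^2 - (m_a+1)^2 = 2(m_b - m_a - 1)$. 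Hence $\chi(D') - \chi(D) = m_b - m_a - 1 \geq 1$, using precisely that $m_b \geq m_a + 2$ because $D$ is not balanced at this stage. This is the heart of the argument: each balancing step strictly increases $\chi$ by at least $1$.

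Finally I would iterate: applying this to the sequence $D = D_0, D_1, \ldots, D_k$ from Definition~\ref{def-balanced}, each consecutive step satisfies $\chi(D_{i+1}) \geq \chi(D_i) + 1$ as long as $D_i$ is not balanced, which holds for $i = 0, \ldots, k-1$. Summing these $k$ inequalities telescopes to $\chi(D_k) \geq \chi(D_0) + k$, as desired. The main thing to be careful about is not a genuine obstacle but a bookkeeping point: one must confirm that at every intermediate divisor $D_i$ (for $i < k$) the chosen largest and smallest multiplicities still differ by at least $2$, so that the per-step gain is genuinely $\geq 1$ rather than $0$. This is exactly the stopping condition built into Definition~\ref{def-balanced} (we stop only upon reaching a balanced divisor), together with the fact that the gap $\max_i m_i - \min_i m_i$ decreases by at most $2$ per step and remains positive until the balanced divisor is reached, so each of the first $k$ steps indeed has $m_b - m_a \geq 2$.
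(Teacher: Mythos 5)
Your proof is correct and takes essentially the same approach as the paper's: both show that a single rebalancing step increases $\chi$ by at least $1$ (the paper phrases the computation intersection-theoretically, writing $D_1 = D_0 + F$ with $F = E_1 - E_2$ and expanding $D_1\cdot(D_1-K)$, while you expand the Riemann--Roch formula in coordinates) and then iterate over the $k$ steps. The only quibble is that the sign on your second bracket should be $+$ rather than $-$, which is immaterial since that bracket vanishes because the sum of the multiplicities is preserved.
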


\begin{proof}
  We claim that each step of ``rebalancing'' the multiplicities increases the Euler characteristic by at least $1$.  Without loss of generality, suppose that $D_0$ is not balanced, $m_1-m_2 \geq 2$, and $D_1 = D_0+E_1-E_2$.  Write $F = E_1-E_2$.  Then since $D_0\cdot(D_0-K) = 2\chi(D_0)-2$, we have
  $$D_1\cdot (D_1-K) = D_0\cdot(D_0-K) +2F\cdot D_0 - F\cdot K + F^2=2\chi(D_0)-2+2m_1-2m_2-2\geq2 \chi(D_0),$$
  so $\chi(D_1) \geq \chi(D_0)+1$.  Repeating proves the result.
\end{proof}

\begin{proposition}\label{prop-balanced}
  Suppose $D = dH - \sum_i m_iE_i$ is an effective divisor with $\chi(D)\geq 1$ and $A_t$ is an ample divisor with
  $$2A_t\cdot D \leq A_t \cdot K.$$
  Let $\chi_{\max}$ be the maximal Euler characteristic among effective divisors $D$ satisfying this inequality.  If we put $\ell = \chi_{\max} - \chi(D)$, then $D$ is at most $\ell$ steps away from having balanced multiplicities.  

In particular, if we additionally assume $10\leq n \leq 17$, then $\chi(D) = \chi_{\max}=1$, and so $D$ is balanced. 
\end{proposition}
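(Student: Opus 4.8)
The plan is to compare $D$ with the balanced divisor $D_k$ produced by the rebalancing procedure of Definition~\ref{def-balanced}, and to show that $D_k$ still competes for $\chi_{\max}$. Since $D$ is assumed to be $k$ steps away from balanced, the preceding lemma gives $\chi(D_k) \geq \chi(D) + k$. If I can show that $D_k$ is itself an \emph{effective} divisor satisfying $2A_t\cdot D_k \leq A_t\cdot K$, then the very definition of $\chi_{\max}$ yields $\chi(D)+k \leq \chi(D_k) \leq \chi_{\max}$, that is, $k \leq \chi_{\max}-\chi(D) = \ell$, which is exactly the assertion. So the whole proof reduces to two verifications about $D_k$.

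First I would check that $D_k$ still satisfies the intersection inequality. Each rebalancing step replaces a divisor $D'$ by $D' + E_i - E_j$, and since $A_t = tH - E$ satisfies $A_t\cdot E_i = 1$ for every $i$, we have $A_t\cdot(E_i - E_j) = 0$. Thus the entire procedure leaves the number $A_t\cdot(-)$ unchanged, so $A_t\cdot D_k = A_t\cdot D$ and the inequality $2A_t\cdot D_k \leq A_t\cdot K$ persists automatically.

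The only real subtlety, and the step I expect to be the main obstacle, is the effectiveness of $D_k$: rebalancing the assigned multiplicities of a linear system could a priori destroy the existence of sections. The clean way around this is a cohomological vanishing. Rebalancing never alters the $H$-coefficient, so $D_k = dH - \sum_i m_i' E_i$ with $d = H\cdot D \geq 0$ (as $D$ is effective and $H$ is nef). Then $H\cdot(K - D_k) = -3 - d < 0$, so $K - D_k$ cannot be effective, and Serre duality gives $h^2(\OO(D_k)) = h^0(\OO(K - D_k)) = 0$. Consequently $h^0(\OO(D_k)) \geq \chi(D_k) \geq \chi(D) + k \geq 1$, so $D_k$ is effective. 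This closes the main inequality $k \leq \ell$.

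For the final sentence I specialize to $10 \leq n \leq 17$. Proposition~\ref{prop-chiUpper} shows that every effective divisor satisfying $2A_t\cdot D'\leq A_t\cdot K$ has $\chi(D') < \frac{n-1}{8} \leq 2$, hence $\chi(D') \leq 1$; in particular $\chi_{\max} \leq 1$. Since $D$ itself is effective, satisfies the inequality, and has $\chi(D) \geq 1$, we conclude $\chi_{\max} = 1 = \chi(D)$, so $\ell = 0$. Then $k \leq \ell = 0$ forces $k = 0$, i.e.\ $D$ is balanced. Everything beyond the effectiveness argument is bookkeeping with the preceding lemma, the computation $A_t\cdot E_i = 1$, and Proposition~\ref{prop-chiUpper}.
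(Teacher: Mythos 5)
Your proposal is correct and follows essentially the same route as the paper: rebalance to $D_k$, note $A_t\cdot D_k = A_t\cdot D$, deduce effectiveness of $D_k$ from $\chi(D_k)\geq 1$, and conclude $k\leq \chi_{\max}-\chi(D)$ from the definition of $\chi_{\max}$. Your Serre-duality justification that $h^2(\OO(D_k))=0$ (so $\chi\geq 1$ really does force effectiveness) is a detail the paper leaves implicit, and it is correctly supplied.
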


\begin{proof}
Consider the divisor $D_k$ which rebalances $D$.  Since $\chi(D_k) \geq \chi(D) + k$ and $\chi(D) \geq 1$, we see that $D_k$ is effective.  Also, $D_k\cdot A_t = D_0\cdot A_t$, so we have $2A_t\cdot D_k \leq A_t\cdot K$.  By the definition of $\chi_{\max}$, we find $\chi(D)+k\leq \chi(D_k)\leq \chi_{\max}$, so $k \leq \chi_{\max}-\chi(D)$. 
\end{proof}

Next we quantify how far away a balanced divisor is from having equal multiplicities.

\begin{definition}
Let $D = dH - \sum_i m_iE_i$ be a balanced divisor, so that $|m_i-m_j|\leq 1$ for all $i,j$.  Then there are $k$ copies of some multiplicity $m+1$ and $n-k$ copies of multiplicity $m$.  We let $\ell = \min\{k,n-k\}$, and we say that $D$ is \emph{$\ell$ steps away from having  equal multiplicities}.  
\end{definition}

\begin{proposition}\label{prop-equal}
  Suppose $D$ is a balanced effective divisor with $\chi(D)\geq 1$ and $A_t$ is an ample divisor with
  $$2A_t\cdot D \leq A_t \cdot K.$$
  Suppose that $D$ is $\ell$ steps away from having equal multiplicities.  Then
  $$\ell < \frac{1}{2}\left(n-\sqrt{(8\chi(D)+1)n}\right).$$
  In particular, if\, $10\leq n\leq 12$, then we get $\chi(D) = 1$, $\ell = 0$, and $D$ has equal multiplicities.
\end{proposition}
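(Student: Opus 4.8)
The plan is to encode everything in the intersection numbers of the single class $C := 2D - K = (2d+3)H - \sum_i(2m_i+1)E_i$ and to exploit an orthogonal decomposition of $C$ adapted to the symmetric structure of the exceptional locus. Two facts about $C$ drive the argument. First, Riemann--Roch together with $D\cdot(D-K) = 2\chi(D)-2$ and $K^2 = 9-n$ gives the clean identity
\[
C^2 = (2D-K)^2 = 8\chi(D) + 1 - n .
\]
Second, exactly as in the proof of Proposition~\ref{prop-Dstablenecessary}, the hypothesis $2A_t\cdot D \le A_t\cdot K$ for an ample $A_t$ propagates to $2B\cdot D < B\cdot K$, that is $B\cdot C < 0$; this is the only place ampleness enters.

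Next I would decompose $C$. Writing $\bar m = \tfrac1n\sum_i m_i = m + \tfrac{k}{n}$ (recall $D$ has $k$ multiplicities equal to $m+1$ and $n-k$ equal to $m$, so $\ell = \min\{k,n-k\}$), set $C_{\mathrm{hom}} := (2d+3)H - (2\bar m+1)E$ and $C_0 := C - C_{\mathrm{hom}} = -2\sum_i(m_i-\bar m)E_i$. Since $\sum_i(m_i-\bar m)=0$, a one-line check gives $C_{\mathrm{hom}}\cdot C_0 = 0$, so $C^2 = C_{\mathrm{hom}}^2 + C_0^2$. A direct computation yields $C_0^2 = -4\sum_i(m_i-\bar m)^2 = -\tfrac{4k(n-k)}{n} = -\tfrac{4\ell(n-\ell)}{n}$, the last equality because $\{k,n-k\}=\{\ell,n-\ell\}$.

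The crux is then to show $C_{\mathrm{hom}}^2 < 0$. Writing $C_{\mathrm{hom}} = aH - bE$ with $a = 2d+3$, $b = 2\bar m+1$, the point is that $B = \sqrt n\,H - E$ is a null vector for the intersection form on the rank-two sublattice $\langle H, E\rangle$ (signature $(1,1)$), and $C_{\mathrm{hom}}^2 = a^2 - nb^2 = (a-\sqrt n\,b)(a+\sqrt n\,b)$. Because $D$ is effective and $H$ is nef, $a = 2d+3 \ge 3 > 0$; and $B\cdot C_{\mathrm{hom}} = B\cdot C = \sqrt n\,(a-\sqrt n\,b) < 0$ forces $a - \sqrt n\,b < 0$, whence $b>0$ and $a+\sqrt n\,b > 0$, giving $C_{\mathrm{hom}}^2 < 0$. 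Combining with the decomposition,
\[
0 > C_{\mathrm{hom}}^2 = C^2 - C_0^2 = \bigl(8\chi(D)+1-n\bigr) + \frac{4\ell(n-\ell)}{n},
\]
so $\tfrac{4\ell(n-\ell)}{n} < n - 8\chi(D) - 1$. This rearranges to $(8\chi(D)+1)n < (n-2\ell)^2$ and hence to the stated bound $\ell < \tfrac12\bigl(n - \sqrt{(8\chi(D)+1)n}\bigr)$. For the final assertion, Proposition~\ref{prop-chiUpper} forces $\chi(D)=1$ when $n\le 17$, and substituting $\chi(D)=1$ makes the right-hand side equal to $\tfrac12(n-3\sqrt n)$, which is strictly less than $1$ precisely for $n=10,11,12$, forcing $\ell=0$.

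I expect the only real obstacle to be finding the right orthogonal splitting $C = C_{\mathrm{hom}} + C_0$; once that is set up, every remaining step is Riemann--Roch together with the elementary fact that $B$ lies on the light cone of $\langle H,E\rangle$. Note in particular that the naive approach—applying the Hodge index theorem directly to $C$ via the ample class $A_{t_D}$—only recovers $C^2<0$, i.e. the weaker bound $8\chi(D)+1<n$ of Proposition~\ref{prop-chiUpper}; the imbalance term $\tfrac{4\ell(n-\ell)}{n}$ is invisible to that argument and is exactly what the $C_0$-component supplies.
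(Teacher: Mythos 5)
Your proof is correct and follows essentially the same route as the paper: your orthogonal splitting $C = C_{\mathrm{hom}} + C_0$ is literally the paper's homogenization $D' = D + G$ in disguise (one checks $C_{\mathrm{hom}} = 2D'-K$ and $C_0 = -2G$), and the computation of the defect term $\tfrac{4\ell(n-\ell)}{n}$ is identical. The only divergence is how the key negativity $C_{\mathrm{hom}}^2 < 0$ is obtained: the paper applies the Hodge index theorem to $2D'-K$, which is orthogonal to the ample class $A_{t_D}$ precisely because $A_{t_D}$ has equal multiplicities and $G$ has zero multiplicity sum (so the imbalance term is \emph{not} invisible to the Hodge-index route, contrary to your closing remark --- it only becomes invisible if one applies the theorem to $C$ itself rather than to the homogenized class); you instead factor $a^2 - nb^2$ over the light cone of $\langle H, E\rangle$ and use $B\cdot C < 0$ together with $a \geq 3$, which is exactly the elementary argument the paper deploys for the converse statement in Proposition~\ref{prop-ineq}. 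Both justifications are valid, and yours has the small advantage of not invoking the Hodge index theorem for $\QQ$-divisors.
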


\begin{proof}
  Suppose there are $1\leq k \leq n-1$ copies of multiplicity $m+1$ and $(n-k)$ copies of multiplicity $m$, so that the average multiplicity is $m + \frac{k}{n}$.  Let $E'$ be the sum of the $k$ exceptional divisors with multiplicity $m+1$, and let $E''$ be the sum of the remaining $n-k$ exceptional divisors, so that $D = dH - (m+1)E'-mE''$.    Put
  $$G=\left(1-\frac{k}{n}\right)E'-\frac{k}{n}E'',$$
  so that $D':=D+G = dH-(m+\frac{k}{n})E$ is a $\QQ$-divisor with equal multiplicities.  Observe that the sum of the multiplicities in $G$ is zero, so $G$ intersects any divisor with equal multiplicities in $0$.  In particular, as in the proof of Proposition~\ref{prop-chiUpper}, the Hodge index theorem still gives $\frac{1}{2}D'\cdot (D'-K) < \frac{n-9}{8}$.   Since $D\cdot (D-K)=2\chi(D)-2$, we compute
  $$D'\cdot (D'-K) = 2\chi(D)-2+ 2G\cdot D-G\cdot K+G^2 = 2\chi(D)-2+ \frac{k(n-k)}{n}.$$
  But $k(n-k)=\ell(n-\ell)$, so we must have the inequality
  $$\chi(D)-1+\frac{\ell(n-\ell)}{2n} = \frac{1}{2}D'\cdot(D'-K)<\frac{n-9}{8}.$$
  Solving the inequality for $\ell$ and recalling that by definition $\ell \leq n/2$, we get the required upper bound on~$\ell$.
\end{proof}

In fact, the results in this section have the following partial converse, which will help us to find all the divisors $D$.

\begin{proposition}\label{prop-ineq}
  Suppose $D=dH-(m+1)E'-mE''$ is a balanced effective divisor with $1\leq \chi(D) < \frac{n-1}{8}$.  Assume $m\geq 0$  and that $D$ is $\ell$ steps away from having equal multiplicities, where
  $$\ell < \frac{1}{2}\left(n-\sqrt{(8\chi(D)+1)n}\right).$$ Then $$2B\cdot D < B\cdot K.$$
\end{proposition}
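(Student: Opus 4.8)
The plan is to rewrite the desired inequality $2B\cdot D < B\cdot K$ as $B\cdot(2D-K)<0$ and then exploit the fact that $B$ has equal multiplicities. Following the proof of Proposition~\ref{prop-equal}, I would set $D' = D + G = dH - (m+\tfrac{k}{n})E$, the homogeneous $\QQ$-divisor obtained by averaging the multiplicities of $D$, where $k$ is the number of multiplicities equal to $m+1$. Since $B = \sqrt{n}\,H - E$ has equal multiplicities and $G$ has total multiplicity zero, we have $B\cdot G = 0$; hence $B\cdot(2D-K) = B\cdot(2D'-K)$, and $2D'-K = (2d+3)H - wE$ is homogeneous with $w = 2m+1+\tfrac{2k}{n}$. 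So it suffices to prove $B\cdot(2D'-K)<0$.

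Next I would reduce this to a self-intersection computation. For a homogeneous $\QQ$-divisor $aH-bE$ with $a,b>0$, the two quantities $B\cdot(aH-bE) = \sqrt{n}\,(a-b\sqrt{n})$ and $(aH-bE)^2 = (a-b\sqrt{n})(a+b\sqrt{n})$ have the same sign, since $a+b\sqrt{n}>0$. Here $a = 2d+3 \geq 3 > 0$ because $d = D\cdot H\geq 0$ (as $D$ is effective and $H$ is nef), and $b = w>0$ because $m\geq 0$. Therefore $B\cdot(2D'-K)<0$ is equivalent to $(2D'-K)^2<0$, and the problem becomes purely numerical.

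Finally I would compute the self-intersection and verify that the hypothesis on $\ell$ is exactly what is needed. Using $(2D'-K)^2 = 4\,D'\cdot(D'-K) + K^2$ together with the identity $D'\cdot(D'-K) = 2\chi(D)-2+\tfrac{k(n-k)}{n}$ from the proof of Proposition~\ref{prop-equal} and $K^2 = 9-n$, one finds
\[
(2D'-K)^2 = 8\chi(D)+1-n+\frac{4k(n-k)}{n}.
\]
Since $k(n-k)=\ell(n-\ell)$, the target $(2D'-K)^2<0$ reads $\tfrac{4\ell(n-\ell)}{n} < n-(8\chi(D)+1)$. The real content of the statement is that the hypothesis is tight: at the boundary value $\ell_0 = \tfrac12\bigl(n-\sqrt{(8\chi(D)+1)n}\bigr)$ one computes $\ell_0(n-\ell_0) = \tfrac{n}{4}\bigl(n-(8\chi(D)+1)\bigr)$, so that $\tfrac{4\ell_0(n-\ell_0)}{n} = n-(8\chi(D)+1)$ exactly. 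Because $\ell\mapsto \ell(n-\ell)$ is strictly increasing on $[0,n/2]$ and $0\leq \ell<\ell_0\leq n/2$, the assumption $\ell<\ell_0$ produces the strict inequality, completing the argument. I do not expect a serious obstacle here; the only point requiring care is the positivity of $a$ and $b$ that underlies the sign equivalence, while the rest is the homogenization reduction and a one-line boundary computation.
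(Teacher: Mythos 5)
Your proof is correct and follows essentially the same route as the paper's: replace $D$ by the averaged homogeneous $\QQ$-divisor $D'$ (noting $B\cdot D'=B\cdot D$), use the difference-of-squares factorization of $(2D'-K)^2$ together with positivity of $2d+3$ and of the $E$-coefficient to equate the sign of $B\cdot(2D'-K)$ with that of $(2D'-K)^2$, and then verify $(2D'-K)^2<0$ from the identity $\tfrac12 D'\cdot(D'-K)=\chi(D)-1+\tfrac{\ell(n-\ell)}{2n}$. The only cosmetic difference is that you make the tightness of the bound at $\ell_0$ explicit, whereas the paper simply asserts that the hypothesis on $\ell$ forces $(2D'-K)^2<0$.
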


\begin{proof}
  First suppose that $D' = dH-mE$ is a $\QQ$-divisor with $d\geq 0$, $m\geq 0$, and $(2D'-K)^2<0$. This inequality is
  $$(2d+3)^2- n(2m+1)^2<0,$$
  which factors as a difference of squares
  $$((2d+3)-\sqrt n (2m+1))((2d+3)+\sqrt{n}(2m+1))<0.$$
The first factor in this product is just $\frac{1}{\sqrt{n}} B\cdot(2D'-K)$.  The second factor is positive since $d\geq 0$ and $m\geq 0$, so we conclude that $B\cdot (2D'-K) < 0$ and $2B\cdot D' < B\cdot K$.

Now letting $D$ be the divisor in the statement of the proposition, we form a balanced $\QQ$-divisor $D'$ with the same average multiplicity as in the proof of Proposition~\ref{prop-equal}.  As in the proof of Proposition~\ref{prop-equal}, we find that
$$\frac{1}{8}(2D'-K)^2 = \frac{1}{2}D'\cdot(D'-K) +\frac{K^2}{8} = \chi(D)-1+\frac{\ell(n-\ell)}{2n}+\frac{9-n}{8}.$$
Our upper bound on $\ell$ then implies that $(2D'-K)^2 < 0$, and by the argument in the first paragraph, $2B\cdot D' < B\cdot K$.  As $B\cdot D' = B\cdot D$, we are done.
\end{proof}

\subsection{Ten through twelve points}
When $10\leq n\leq 12$, it is now a matter of number theory to describe all the possible divisors $D$.

\begin{theorem}\label{thm-10points}
  For $10\leq n\leq 12$, we consider the sequence of convergents
  $$\frac{p_1}{q_1},\,\frac{p_2}{q_2},\,\frac{p_3}{q_3},\,\frac{p_4}{q_4},\,\ldots$$
  of the continued fraction expansion of $\sqrt{n}$.  Here the odd convergents are less than $\sqrt{n}$, and the even convergents are greater than $\sqrt{n}$.
\begin{enumerate}
\item\label{thm-10p-1} For any positive odd integer $k$, we let $d_k = \frac{1}{2}(p_k-3)$ and $m_k = \frac{1}{2}(q_k-1)$, and we define a divisor
  $$D_k = d_kH - m_k E.$$
  Then $D_k$ is an $($integral\,$)$ effective divisor with $\chi(D_k) = 1$ and $2B\cdot D_k < B\cdot K$.

\item\label{thm-10p-2} Suppose $D$ is an effective divisor with $\chi(D) \geq 1$ and there is an ample divisor $A_t$ with $2A_t\cdot D < A_t\cdot K$.  Then there is a positive odd integer $k$ such that $D = D_k$.
\end{enumerate}
\end{theorem}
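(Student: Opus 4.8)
The plan is to reduce both directions to a single Pell-type equation and then invoke the classical link between small solutions of such equations and continued fractions. I would treat part~\eqref{thm-10p-2} first. Given $D$ as in the hypothesis, Propositions~\ref{prop-chiUpper}, \ref{prop-balanced}, and~\ref{prop-equal} show that for $10\le n\le 12$ the class must have equal multiplicities with $\chi(D)=1$, so I may write $D = dH - mE$ with $d\ge 0$. Setting $p = 2d+3$ and $q = 2m+1$, the Riemann--Roch identity used in Proposition~\ref{prop-chiUpper},
\[
  \tfrac18(2D-K)^2 = (\chi(D)-1) + \tfrac{9-n}{8},
\]
shows $\chi(D)=1$ is equivalent to $(2D-K)^2 = 9-n$, i.e.\ to $p^2 - nq^2 = 9-n$. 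Next I would unwind the inequalities by direct intersection computations: as in the proof of Proposition~\ref{prop-Dstablenecessary}, the existence of an ample $A_t$ with $2A_t\cdot D < A_t\cdot K$ forces $2B\cdot D < B\cdot K$, which is equivalent to $p/q < \sqrt n$; running the same computation with $A_t$ in place of $B$ shows the ample inequality is satisfiable only when $m\ge 0$, so $q\ge 1$ and $p\ge 3$. Thus the data of $D$ is exactly a positive solution $(p,q)$ of $p^2 - nq^2 = 9-n$ with $p,q$ odd and $p/q<\sqrt n$.

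The number-theoretic heart is to recognize such $(p,q)$ as odd convergents. First, $\gcd(p,q)=1$: any common prime divides $p^2 - nq^2 = -(n-9)$, and since $n-9\in\{1,2,3\}$ while $p,q$ are odd, the only surviving possibility $n-9=3$ is ruled out by reducing $p^2-nq^2=-3$ modulo $9$. The crucial point is that $\lvert 9-n\rvert = n-9 < \sqrt n$ holds \emph{precisely} for $10\le n\le 12$; this inequality pins down the range of the theorem. Under this bound the classical criterion that a positive reduced fraction $p/q$ with $\lvert p^2-nq^2\rvert < \sqrt n$ is a convergent of the continued fraction of $\sqrt n$ applies, and since $p/q<\sqrt n$ the convergent is odd-indexed. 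Because $\chi(D)=1$ then forces $D$ to be the homogeneous class attached to that convergent, we get $D = D_k$ for the corresponding odd $k$, which proves~\eqref{thm-10p-2}.

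For part~\eqref{thm-10p-1} I would run the correspondence in reverse. It suffices to check that for each odd $k$ the convergent $p_k/q_k$ has $p_k,q_k$ both odd and satisfies $p_k^2 - nq_k^2 = 9-n$. Granting this, $d_k = \tfrac12(p_k-3)$ and $m_k=\tfrac12(q_k-1)$ are nonnegative integers, the equation yields $\chi(D_k)=1$, the class $D_k = d_kH - m_kE$ is effective since a homogeneous class with $n\ge 10$ is nonspecial by Remark~\ref{rem-SHGHConsequences}\eqref{rem-SHGHC-3} and hence has $h^0 = \chi = 1 > 0$, and $p_k/q_k<\sqrt n$ unwinds to $2B\cdot D_k < B\cdot K$ as above. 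The identity $p_k^2-nq_k^2=9-n$ for odd $k$ I would obtain from the standard formula $p_j^2 - nq_j^2 = (-1)^{j+1}Q_{j+1}$ together with a direct computation of the (short) period of the continued fraction of $\sqrt n$ for $n=10,11,12$; structurally, the positive solutions with $p/q<\sqrt n$ form a single orbit of the base solution $(3,1)$ under the fundamental unit of $u^2-nv^2=1$ (namely $19+6\sqrt{10}$, $10+3\sqrt{11}$, or $7+2\sqrt{12}$), and since each such unit has one even and one odd coordinate, oddness of both $p$ and $q$ is preserved along the orbit.

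The main obstacle is the number theory of the last paragraph: confirming that there is a single orbit of solutions (equivalently, that no spurious solution with $p/q<\sqrt n$ evades the convergents) and tracking the parity of $p_k,q_k$ across the period. Here the solvability of the negative Pell equation enters, and it behaves differently in the three cases: $\sqrt{10}$ has odd period, so the solutions are the odd powers of $3+\sqrt{10}$, whereas $\sqrt{11}$ and $\sqrt{12}$ have even period and one must use the fundamental unit of norm $+1$. I expect to carry out this bookkeeping separately for $n=10,11,12$. The remaining ingredients—the Riemann--Roch identity, the intersection-number translations, and the invocation of the classical convergent criterion—are routine once the sharp bound $n-9<\sqrt n$ is available.
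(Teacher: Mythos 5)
Your proposal follows essentially the same route as the paper: both directions are reduced, via Propositions~\ref{prop-chiUpper}, \ref{prop-balanced}, and~\ref{prop-equal}, to the Pell-type equation $p^2-nq^2=9-n$ in $p=2d+3$, $q=2m+1$, and part~\eqref{thm-10p-2} is settled by the classical criterion that positive solutions of $x^2-ny^2=N$ with $|N|<\sqrt n$ are convergents (the paper cites Shockley for exactly this; your sharp observation that $n-9<\sqrt n$ pins down the range $10\le n\le 12$ is the same point). For part~\eqref{thm-10p-1} your bookkeeping differs cosmetically: the paper verifies $p_k^2-nq_k^2=9-n$ and the parity of $p_k,q_k$ by an explicit two-step recurrence $p_{k+2}=(3a+1)p_k+(9a+6)q_k$, $q_{k+2}=ap_k+(3a+1)q_k$ with $a=6/(n-9)$, whereas you propose the $(-1)^{j+1}Q_{j+1}$ formula and the fundamental-unit orbit; both work, and your orbit description also gives the single-orbit statement needed to rule out spurious solutions. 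The one concrete thing you should fix is the effectiveness of $D_k$: you derive $h^0(D_k)>0$ from nonspeciality of homogeneous classes via Remark~\ref{rem-SHGHConsequences}\eqref{rem-SHGHC-3}, but that remark is a consequence of the SHGH conjecture, and Theorem~\ref{thm-10points} is stated (and used) unconditionally. No conjecture is needed: since $K-D_k$ has negative intersection with $H$, we have $h^2(D_k)=h^0(K-D_k)=0$, so $\chi(D_k)=1$ already forces $h^0(D_k)=1+h^1(D_k)\ge 1$, which is how the paper argues. With that substitution your argument proves the theorem in the intended unconditional form.
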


\begin{proof}
\eqref{thm-10p-1}~ The continued fraction expansion of $\sqrt{n}$ is as follows:
$$ \sqrt{10} = [3;\overline 6], \quad \sqrt{11} = [3; \overline{3,6}], \quad \sqrt{12} = [3;\overline{2,6}].$$
We write $a = 6/(n-9)$, so that in every case $\sqrt{n} = [3;\overline{a,6}]$. Then $\frac{p_1}{q_1} = \frac{3}{1}$, and for odd $k\geq 3$, we have the recurrence
\begin{align*}
p_k &= 6p_{k-1}+p_{k-2}, \\
q_k &= 6q_{k-1}+q_{k-2}.  
\end{align*}
Thus for odd $k$, we have $p_k\equiv p_{k-2} \pmod 2$ and $q_k \equiv q_{k-2} \pmod 2$, so $p_k$ and $q_k$ are both odd.  Therefore, $D_k$ is an integral divisor.

To show that $D_k$ is effective, we will show that $\chi(D_k) = 1$.  
Since $\chi(D_k) = 1+ \frac{1}{2} D_k\cdot (D_k-K)$, this is equivalent to proving the equality
$$p_k^2 - nq_k^2 = (2D_k-K)^2 = 4D_k\cdot (D_k-K) + K^2 = K^2 = 9-n.$$
For $k=1$, the equality is trivial.  From the fact that the continued fraction expansion of $\sqrt{n}$ has period dividing $2$, we can manipulate continued fractions to see that
$$
\frac{p_{k+2}}{q_{k+2}} = \frac{(3a+1)p_k+(9a+6)q_k}{ap_k + (3a+1)q_k}.
$$
Here the numerator and denominator are already coprime because the matrix $\begin{psmallmatrix} 3a+1 & 9a+6\\ a & 3a+1\end{psmallmatrix}$ has determinant $1$, so for $k$ odd, we have the recurrence
\begin{align*}
  p_{k+2} &= (3a+1)p_k +(9a+6)q_k,\\
  q_{k+2} &= ap_k + (3a+1)q_k.
\end{align*} 
Then we compute
\begin{align*}
p_{k+2}^2-nq_{k+2}^2 &= ((3a+1)p_k+(9a+6)q_k)^2-n(ap_k+(3a+1)q_k)^2 \\
&=(a(6-(n-9)a)+1)p_k^2 \\
&\hphantom{=(}+2(3a+1)(6-(n-9)a)p_kq_k\\
&\hphantom{=(}+((9a+6)(6-(n-9)a)-n)q_k^2
\\&= p_k^2-nq_k^2.
\end{align*}
Here the second equality is a direct computation, and the third follows since $a = 6/(n-9)$.   Thus $p_k^2-nq_k^2 = 9-n$ holds in every case.

Next we show that $2B\cdot D_k < B\cdot K$.  Let $t = nq_k/p_k$.  Since $p_k/q_k < \sqrt n$, we have $t > \sqrt{n}$.  Then $2D_k$ and $K$ have the same $A_t$-slope:
$$A_t \cdot (2D_k-K) = tp_k-nq_k=0.$$
As $t$ decreases to $\sqrt{n}$, we get the required inequality.

\eqref{thm-10p-2}~ Let $D$ be an effective divisor with $\chi(D) \geq 1$ and $2A_t\cdot D < A_t\cdot K$.  By Proposition~\ref{prop-equal}, it takes the form $D = dH - mE$.  Since $\chi(D) = 1$, we have $D\cdot (D-K)=0$, and
$$(2d+3)^2-n(2m+1)^2=(2D-K)^2 = 4D\cdot(D-K)+K^2=K^2=9-n.$$
Thus $(x,y) = (2d+3,2m+1)$ is a positive solution to the generalized Pell's equation
$$x^2-ny^2 = N$$
with $N = 9-n<0$.  It is known  that when $|N| < \sqrt n$ (which holds here since $10\leq n \leq 12$), every positive solution $(x,y)$ to this equation is of the form $(x,y)= (p_k,q_k)$ (see \cite[Theorem~20, p.~204]{Shockley}).
For $k$ even, we have $p_k^2-nq_k^2 >0$.  Therefore, there must be an odd integer $k$ such that $D = D_k$.
\end{proof}

When $10\leq n\leq 12$, we can now give a complete list of all the possible curves $D$.  They are closely related to the odd convergents in the continued fraction expansion of $\sqrt{n}$.

\begin{remark}
  For $n=10$, the convergents in the continued fraction expansion of $\sqrt{10}$ are
  $$\frac{3}{1},\,\frac{19}{6},\,\frac{117}{37},\,\frac{721}{228},\,\frac{4443}{1405},\,\frac{27379}{8658},\,\frac{168717}{53353},\,\ldots.$$
  The odd terms in this sequence give the corresponding divisors
  $$\OO,\quad 57H-18E,\quad 2220H-702E,\quad 84357H - 26676 E, \quad \ldots. $$

When $n=11$, the convergents in the continued fraction expansion of $\sqrt{11}$ are  
$$
\frac{3}{1},\,\frac{10}{3},\,\frac{63}{19},\,\frac{199}{60},\,\frac{1257}{379},\,\frac{3970}{1197},\,\frac{25077}{7561},\,\ldots,
$$
and the corresponding divisors are
$$\OO,\quad 30H-9E,\quad 627H - 189E,\quad 12537H - 3780E, \quad \ldots.$$

Finally,  when $n=12$, the convergents in the continued fraction expansion of $\sqrt{12}$ are
$$\frac{3}{1},\,\frac{7}{2},\,\frac{45}{13},\,\frac{97}{28},\,\frac{627}{181},\,\frac{1351}{390},\,\frac{8733}{2521},\,\ldots,$$
with corresponding divisors
$$\OO,\quad 21H-6E,\quad 312H-90E,\quad 4365H - 1260E, \quad \ldots.$$
\end{remark}

\subsection{Thirteen through seventeen points}\label{ssec-types}
For the blowup at $13$ to $17$ points, divisors $D$ must still be balanced, but they are no longer required to have equal multiplicities.  This makes the classification more complicated, but it is still tractable in each case.  It becomes necessary to solve several quadratic Diophantine equations, according to how unequal the multiplicities can be.  We discuss the cases $n=13$ and $n=16$ in more detail; the remaining cases can be handled identically.

\begin{example}
  Let $n=13$.  Here we classify the effective divisors $D$ such that $\chi(D)\geq 1$ and $2A_t\cdot D\leq A_t\cdot K$ for some ample divisor $A_t$.  We know that $D$ must have $\chi(D) = 1$ and that $D$ is balanced.  Furthermore, $D$ is at most one step away from having equal multiplicities (see Proposition~\ref{prop-equal}).  Then $D$ has the form $D = dH - mE'-(m-1)E''$, where $E'$ is a sum of $n-k$ exceptional divisors, $E'+E'' = E$, and $k$ is one of $0,1$, or $12$.  We must have
  $$(2D-K)^2 = 4D\cdot(D-K)+K^2 =K^2= -4.$$
  Considering the three possible values for $k$ separately  gives us three possible Diophantine equations to solve.  Conversely, note that $\chi(D)$ can be determined from $(2D-K)^2$, so that each solution to the Diophantine equations will necessarily give a divisor $D$ with $\chi(D)=1$.  Proposition~\ref{prop-ineq} furthermore shows that the solutions with $d\geq 0$ and $m\geq 1$ give divisors $D$ satisfying $2B\cdot D < B\cdot K$, and so $2A_t\cdot D\leq A_t\cdot K$ for some ample $A_t$ if we are assuming the Nagata conjecture.

  In more detail, we can expand the equation $(2D-K)^2 = 9-n$ to obtain
  $$(2d+3)^2-13(2m+1)^2+8km=-4.$$
  Specializing to the values $k=0,1,12$ gives the three associated equations:
  $$
  \begin{array}{rcrcl}
k=0: & &(2d+3)^2-13(2m+1)^2&=&-4,\\
k=1: & &(2d+3)^2-13(2m+1)^2+8m&=&-4,\\
k=12: & &(2d+3)^2-13(2m+1)^2+96m&=&-4.\\
  \end{array}
  $$
Quadratic Diophantine equations like this can be solved using Lagrange's method to transform them to generalized Pell's equations.  We used the online Alpertron generic two integer variable equation solver\footnote{https://www.alpertron.com.ar/QUAD.HTM} to find all the solutions in each case.

\emph{Case $k=0$}. In this case, there are four fundamental solutions $(d,m)=(-3,0)$, $(0,0)$, $(0,-1)$,  and $(-3,-1)$ (note that the third and fourth solutions can be obtained from the first two by Serre duality).  Given a solution $(d,m)$, new solutions can be obtained by applying the transformation
$$(d,m) \longmapsto (649d+2340m+2142,180d+649m+594)$$
or the inverse of this transformation.
In this way, each fundamental solution gives rise to infinitely many solutions indexed by the integers.  The solutions fit into the following chains:
$$ \cdots \longmapsto (-2782263,771660)\longmapsto (-2145,594)\longmapsto (-3,0) \longmapsto (195,54)\longmapsto (255057,70740)\longmapsto \cdots,$$
$$\cdots \longmapsto (-255060,70740)\longmapsto (-198,54)\longmapsto (0,0) \longmapsto (2142,594)\longmapsto (2782260,771660)\longmapsto \cdots,$$
$$\cdots \longmapsto\mkern-1mu (2782260,-771661)\longmapsto\mkern-1mu (2142,-595)\longmapsto\mkern-1mu (0,-1)\longmapsto\mkern-1mu (-198,-55)\longmapsto\mkern-1mu (-255060,-70741)\longmapsto \cdots, $$
$$\cdots \longmapsto\mkern-1mu (255057,-70741) \mkern-1mu\longmapsto\mkern-1.5mu (195,-55)\mkern-1mu\longmapsto\mkern-1.5mu (-3,-1)\mkern-1mu\longmapsto\mkern-1.5mu (-2145,-595)\longmapsto\mkern-1.5mu (-2782263,-771661)\mkern-1mu\longmapsto\mkern-1.5mu \cdots.$$
Out of all the solutions, the ones giving rise to effective divisors with $2B\cdot D < B\cdot K$ are the rightward chains 
$$(0,0)\longmapsto (2142,594)\longmapsto (2782260,771660)\longmapsto \cdots,$$ 
$$(195,54)\longmapsto (255057,70740)\longmapsto(331065735,91821114)\longmapsto \cdots. $$
The corresponding divisors $D$ come from two infinite families beginning with
\begin{equation}\tag{I}\label{type1}
  \OO,\quad 2142H-594E,\quad 2782260H-771660E,\quad \ldots
\end{equation}
and
\begin{equation}\tag{II}\label{type2}
  195H - 54E, \quad 255057H-70740E, \quad 331065735H-91821114E, \quad \ldots .
\end{equation}

\emph{Case $k=1$}. This time there are two fundamental solutions $(-3,0),(0,0)$, and the recurrence transformation is
$$(d,m)\longmapsto (-649d-2340m-1965,-180d-649m-545).$$
The fundamental solutions fit into the chains
\begin{align*}
  \cdots&\longmapsto (-2548623,706860)\longmapsto (1962,-545)\longmapsto (-3,0)\longmapsto (-18,-5)\longmapsto (21417,5940)\longmapsto \\
  &\longmapsto (-27801198,-7710665) \longmapsto (36085931637,10008436680)\longmapsto \cdots
\end{align*}
and
\begin{align*}
  \cdots & \longmapsto (-21420,5940)\longmapsto (15,-5)\longmapsto (0,0)\longmapsto (-1965,-545)\longmapsto (2548620,706860)\longmapsto\\
  &\longmapsto (-3308108745,-918504285)\longmapsto (4293922600440,1190919854520)\longmapsto \cdots.
\end{align*}
In each case, the geometrically relevant solutions are the ones obtained from the fundamental solutions by applying the recurrence an even number of times.  Note that $(0,0)$ is actually a geometrically relevant solution, corresponding to the divisor $E_{13}$.  Up to permuting the exceptional divisors, the corresponding divisors $D$ form two infinite families beginning with
\begin{equation}\tag{III}\label{type3}
  \begin{array}{c}
    21417H-5940E_{1,\ldots,12}-5939E_{13}, \\
    36085931637H-10008436680E_{1,\ldots,12}-10008436679E_{13}, \quad \ldots\end{array}
\end{equation}
and 
\begin{equation}\tag{IV}\label{type4}
  E_{13}, \quad 2548620H-706860E_{1,\ldots,12}-706859 E_{13}, \quad \ldots.
\end{equation}

\emph{Case $k=12$}. This case is very similar to the previous.  The fundamental solutions are again $(-3,0)$ and $(0,0)$, and there is a recurrence
$$(d,m)\longmapsto (-649d-2340m+15,-180d-649m+5).$$
We get chains of solutions
$$\cdots \longmapsto(0,0)\longmapsto (15,5)\longmapsto (-21420,-5940)\longmapsto(27801195,7710665)\longmapsto \cdots$$
and
$$\cdots\longmapsto (-3,0)\longmapsto (1962,545)\longmapsto (-2548623,-706860)\longmapsto (3308108742,917504285)\longmapsto \cdots.$$
In each case, applying the transformation an odd number of times to the fundamental solution gives a solution of geometric significance, and we get two infinite families of divisors beginning with the divisors
\begin{equation}\tag{V}\label{type5}
  15H-5E_1 - 4E_{2,\ldots,13}, \quad 27801195H - 7710665E_1-7710664E_{2,\ldots,13}, \quad \ldots
\end{equation}
and
\begin{equation}\tag{VI}\label{type6}
  1962H-545E_1-544E_{2,\ldots,13}, \quad 3308108742H-917504285E_1-917504284 E_{2,\ldots,13},\quad \ldots.
\end{equation}
The solution $(0,0)$ corresponds to the divisor $E_{2,\ldots,13}$, which does not satisfy $2B\cdot D< B\cdot K$.

\begin{theorem}\label{thm-D13}
Let $n=13$.  If\, $D$ is an effective divisor with $\chi(D)\geq 1$ such that $2A_t\cdot D <A_t\cdot K$ holds for some ample divisor $A_t$, then $D$ comes from one of the six infinite families \eqref{type1}--\eqref{type6} discussed above.  Conversely, if the Nagata conjecture holds for $n=13$, then the divisors in these six families are precisely the effective divisors $D$ with $\chi(D)=1$ such that $2B\cdot D < B\cdot K$.
\end{theorem}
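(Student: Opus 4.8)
The plan is to treat the two directions separately, both resting on the structural reduction from the preceding example together with the complete solution of the three Diophantine equations attached to $k=0,1,12$. For the forward direction, suppose $D$ is effective with $\chi(D)\geq 1$ and $2A_t\cdot D< A_t\cdot K$ for some ample $A_t$. First I would invoke Proposition~\ref{prop-chiUpper} to force $\chi(D)=1$, then Proposition~\ref{prop-balanced} to conclude $D$ is balanced, and finally Proposition~\ref{prop-equal} to bound the number of steps $\ell$ from equal multiplicities by $\ell<\tfrac12(13-\sqrt{117})$, which forces $\ell\in\{0,1\}$. This puts $D$ in the shape $D=dH-mE'-(m-1)E''$ with $k:=|E''|\in\{0,1,12\}$, and since $\chi(D)=1$ the identity $(2D-K)^2=K^2=-4$ gives exactly the three equations $(2d+3)^2-13(2m+1)^2+8km=-4$.

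The task is then to solve these and extract the geometrically meaningful solutions. For $k=0$ this is the generalized Pell equation $x^2-13y^2=-4$ with $|N|=4<\sqrt{13}$, so, exactly as in the proof of Theorem~\ref{thm-10points}, its solutions are governed by the convergents of $\sqrt{13}$; for $k=1,12$ the inhomogeneous terms require Lagrange's reduction to a Pell equation, after which the solutions organize into the chains recorded in the example under the stated recurrence maps. To single out the relevant divisors I would use two observations: (i) effectivity is automatic once $d\geq 0$, since then $H\cdot(K-D)=-(d+3)<0$ forces $h^2(D)=h^0(K-D)=0$, whence $h^0(D)=\chi(D)+h^1(D)\geq 1$; and (ii) the slope condition is equivalent to $B\cdot(2D-K)=\sqrt{13}(2d+3)-13(2m+1)+2k<0$, an inequality which holds along each chain on exactly one arm, namely where $\tfrac{2d+3}{2m+1}$ approaches $\sqrt{13}$ from below. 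Imposing $d\geq 0$, the correct sign on the multiplicities, and this inequality selects precisely the rightward, respectively even-indexed, arms of the chains, which are the families \eqref{type1}--\eqref{type6}.

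For the converse, assume the Nagata conjecture, so that $B$ is nef and, as noted at the start of Section~\ref{sec-D}, $2B\cdot D<B\cdot K$ implies $2A_t\cdot D<A_t\cdot K$ for $t$ slightly larger than $\sqrt{13}$. One inclusion is then immediate: an effective $D$ with $\chi(D)=1$ and $2B\cdot D<B\cdot K$ meets the hypotheses of the forward direction, so it lies in one of \eqref{type1}--\eqref{type6}. For the reverse inclusion I would verify that every member of these families qualifies: $\chi(D)=1$ because each solves $(2D-K)^2=-4=K^2$ and $\chi(D)=1+\tfrac18\big((2D-K)^2-K^2\big)$; effectivity because $d\geq 0$, as in (i); and $2B\cdot D<B\cdot K$ by Proposition~\ref{prop-ineq}, whose hypotheses ($d\geq 0$, $m\geq 0$, and $\ell\leq 1<\tfrac12(13-\sqrt{117})$) hold by construction.

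The main obstacle is the completeness of the Diophantine classification for $k=1$ and $k=12$: unlike the clean Pell equation in the $k=0$ case, these are genuine inhomogeneous binary quadratic equations, and showing that the listed recurrence chains exhaust all integer solutions requires the full Lagrange reduction to an associated Pell equation (the step delegated to the Alpertron solver). Once completeness is granted, the rest---tracking the signs of $B\cdot(2D-K)$ and of the multiplicities monotonically along each chain to isolate the correct arm---is routine but must be carried out carefully so that \eqref{type1}--\eqref{type6} capture all geometrically relevant solutions and nothing more.
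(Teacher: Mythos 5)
Your proposal follows essentially the same route as the paper's own discussion: the reduction via Propositions~\ref{prop-chiUpper}, \ref{prop-balanced}, and~\ref{prop-equal} to the three Diophantine equations for $k=0,1,12$, whose complete solution chains are obtained by Lagrange reduction (the paper delegates this to a solver), followed by selecting the geometrically relevant arms and using Proposition~\ref{prop-ineq} together with an effectivity check ($h^2(D)=h^0(K-D)=0$ since $d\geq 0$) for the converse. The only point needing a word of care is that Proposition~\ref{prop-ineq} literally requires $m\geq 0$, which fails for the degenerate member $E_{13}$ of family~\eqref{type4}; that case must be (and easily is) verified directly, just as the paper checks the analogous solution $E_{2,\ldots,13}$ in the $k=12$ case.
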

\end{example}

\begin{example}
Let $n=16$.  Since the Nagata conjecture is true for $16$ points, our results here are sharper, and we can completely classify effective divisors $D$ such that $\chi(D) \geq 1$ and $2B\cdot D < B\cdot K$.

The method is the same as the method for $n=13$: we find several quadratic Diophantine equations and determine all their solutions.  However, these Diophantine equations turn out to only have finitely many solutions, so our answer is considerably more concrete.

Suppose $D$ has $\chi(D)\geq 1$ and $2B\cdot D< B\cdot K$.  We know $\chi(D) = 1$ and $D$ is balanced.  Furthermore, by Proposition~\ref{prop-equal}, it is at most one step away from having equal multiplicities.  Write $D = dH - mE'-(m-1)E''$ as in the previous example.  Then the Diophantine equation $(2D-K)^2 = 9-n$ becomes
$$(2d+3)^2-16(2m+1)^2+8km=-7.$$ 
We need to find all solutions for $k=0,1,15$.

Note that the pairs $(d,m) = (0,0)$ and $(-3,0)$ are solutions for every $k$.  The solution $(-3,0)$ is not geometrically relevant.  On the other hand, $(0,0)$ corresponds to a sum of $k$ exceptional divisors.  This will satisfy $2B\cdot D < B\cdot K = 4$ only if $k$ is $0$ or $1$.  In what follows, we ignore these solutions and search for additional solutions.

\emph{Case $k=0$}.
The only additional solutions are $(0,-1),(-3,-1)$.  Neither is geometrically relevant.

\emph{Case $k=1,15$}. There are no additional solutions.

We summarize the discussion in the following theorem.

\begin{theorem}\label{thm-D16}
Let $n=16$.  The only effective divisors $D$ with $\chi(D) \geq 1$ and $2B\cdot D < B\cdot K$ are $\OO$ and the exceptional divisors $E_i$.
\end{theorem}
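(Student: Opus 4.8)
The plan is to run the same machinery used for $n=13$ in this section, but now to exploit that $16$ is a perfect square in order to reach a genuinely finite list. First I would apply the structural results of Section~\ref{sec-D}. Since $10\le n\le 17$, Proposition~\ref{prop-chiUpper} forces $\chi(D)=1$ and Proposition~\ref{prop-balanced} forces $D$ to be balanced; then Proposition~\ref{prop-equal}, evaluated at $n=16$ and $\chi(D)=1$, gives $\ell<\tfrac12\big(16-\sqrt{9\cdot 16}\big)=2$, so $D$ is at most one step away from equal multiplicities. Writing $D=dH-mE'-(m-1)E''$ with $E''$ a sum of $k$ exceptional divisors, only the three cases $k\in\{0,1,15\}$ remain.

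Next I would translate the constraint $\chi(D)=1$, i.e.\ $(2D-K)^2=K^2=-7$, into the Diophantine equation $(2d+3)^2-16(2m+1)^2+8km=-7$ by expanding with $H^2=1$ and $E_i^2=-1$. The decisive feature, and the reason the perfect-square case collapses to a finite answer, is that each of these equations can be rewritten as a difference of two squares equal to a fixed integer and hence has only finitely many integer solutions. For $k=0$ this is immediate, since $16(2m+1)^2=(8m+4)^2$ and the equation factors as $\big((2d+3)-(8m+4)\big)\big((2d+3)+(8m+4)\big)=-7$; running through the factorizations of $-7$ produces exactly the four solutions $(d,m)\in\{(0,0),(-3,0),(0,-1),(-3,-1)\}$. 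For $k=1$ and $k=15$ I would instead complete the square: setting $u=2d+3$ and clearing a factor of $4$, the equation becomes $(16m\pm 7)^2-(2u)^2=13$, another difference of squares equal to a prime, whose only integral solutions are $(d,m)\in\{(0,0),(-3,0)\}$.

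The final step is to decide which surviving solutions are geometrically relevant, meaning that they correspond to an \emph{effective} $D$ with $2B\cdot D<B\cdot K$. The solution $(-3,0)$ gives a non-effective class and is discarded in every case, as do the extra $k=0$ solutions $(0,-1),(-3,-1)$, which have $m<0$. The solution $(0,0)$ corresponds to the sum of the chosen $k$ exceptional divisors, and a direct computation with $B=4H-E$ gives $B\cdot K=4$ and $2B\cdot(E_{i_1}+\cdots+E_{i_k})=2k$; hence the inequality $2k<4$ holds precisely for $k\in\{0,1\}$, yielding $\OO$ and a single $E_i$. Conversely, both $\OO$ and $E_i$ genuinely satisfy $2B\cdot D<B\cdot K$, either by this computation or by invoking Proposition~\ref{prop-ineq} (each has $m\ge 0$, $\chi(D)=1$, and $\ell\le 1<2$). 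This exhausts the list and proves the theorem.

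The one non-formal point, and the main obstacle, is establishing finiteness of the solution sets rigorously rather than by appeal to a solver: the clean way is the difference-of-squares reduction above, which succeeds exactly because $n=16$ is a perfect square, so that $16(2m+1)^2$ is itself a square up to the small linear correction $8km$. For non-square $n$ the analogous equations are true Pell's equations with infinitely many solutions, precisely as in the $n=10$ and $n=13$ analyses. A secondary subtlety is the bookkeeping of geometric relevance: one must keep in mind that the innocuous-looking solution $(0,0)$ secretly encodes a $k$-fold sum of exceptional divisors whose pairing with $B$ grows linearly in $k$, so that all but the $k=0,1$ instances are eliminated by the inequality itself rather than by any failure of effectivity.
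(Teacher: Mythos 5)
Your proposal is correct and follows essentially the same route as the paper: the same reductions via Propositions~\ref{prop-chiUpper}, \ref{prop-balanced}, and~\ref{prop-equal} to the three cases $k\in\{0,1,15\}$, the same Diophantine equation $(2d+3)^2-16(2m+1)^2+8km=-7$, and the same filtering of the solutions $(0,0)$ and $(-3,0)$ by the inequality $2B\cdot D<B\cdot K=4$. The one place you add genuine value is in solving the equations by hand: the paper (as in the $n=13$ case) delegates this to a computer solver and merely asserts the solution lists, whereas your difference-of-squares factorizations $\bigl((2d+3)-(8m+4)\bigr)\bigl((2d+3)+(8m+4)\bigr)=-7$ for $k=0$ and $(16m\pm 7)^2-(2u)^2=13$ for $k=1,15$ are correct, make the finiteness transparent, and isolate exactly why the perfect-square case collapses while non-square $n$ yields Pell's equations.
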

\end{example}

\subsection{Twenty-five points}
Here we classify the effective divisors $D$ with $\chi(D) \geq 1$ and $2B\cdot D < B\cdot K$ when $n=25$.  Again in this case, the Nagata conjecture holds and $B = 5H - E$ is nef.

\begin{example}
Let $n=25$, and suppose $D$ is an effective divisor with $\chi(D) \geq 1$ and $2B\cdot D<B\cdot K$.  By Proposition~\ref{prop-chiUpper}, we have $1\leq \chi(D) \leq 2$.  

First suppose $\chi(D) = 2$.   By Proposition~\ref{prop-balanced}, we know that $D$ has balanced multiplicities, and by Proposition~\ref{prop-equal}, we know that $D$ is at most two steps away from having equal multiplicities.  The equality $\chi(D)=2$ is equivalent to $(2D-K)^2 = 8\chi(D)-8+K^2=-8$.  Writing $D = dH - mE'-(m-1)E''$ as in the previous example, we have the Diophantine equation
$$(2d+3)^2 - 25(2m+1)^2 + 8km = -8,$$
and we must find solutions when $k = 0,1,2,23,24$. The only solutions are when $(d,m,k)$ takes the following values:
$$(1,-1,1),\,(-4,-1,1),\,(-4,1,24),\,(1,1,24).$$
Only the solution $(1,1,24)$ is relevant, and it corresponds to the divisors of the form $H-E_i$ for some $i$.

\begin{theorem}\label{thm-D25}
Let $n=25$.  The only effective divisors $D$ with $\chi(D)\geq 2$ and $2B\cdot D < B\cdot K$ are the divisors $H-E_i$.
\end{theorem}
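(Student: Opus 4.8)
The plan is to follow exactly the template established for the cases $n=13$ and $n=16$, but now with the parameter $\chi(D)=2$ forcing the Diophantine analysis. First I would invoke Proposition~\ref{prop-chiUpper} to conclude that any effective $D$ with $\chi(D)\geq 1$ and $2B\cdot D< B\cdot K$ must satisfy $\chi(D)<\frac{n-1}{8}=3$, hence $\chi(D)\in\{1,2\}$; the theorem concerns the case $\chi(D)\geq 2$, so we are reduced to $\chi(D)=2$. By Proposition~\ref{prop-balanced} the divisor is balanced, and by Proposition~\ref{prop-equal} it is at most $\ell$ steps away from having equal multiplicities, where the bound $\ell<\frac12\bigl(n-\sqrt{(8\chi(D)+1)n}\bigr)$ with $n=25$ and $\chi(D)=2$ gives $\ell<\frac12(25-\sqrt{425})<2.2$, so $\ell\leq 2$. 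This means the number $k$ of exceptional divisors carrying the larger multiplicity lies in $\{0,1,2,23,24\}$ (using that $\ell=\min\{k,n-k\}\leq 2$).

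Next I would write $D=dH-mE'-(m-1)E''$ and translate the Euler characteristic condition into the equation $(2D-K)^2=8\chi(D)-8+K^2=8\cdot 2-8+(9-25)=-8$. Expanding $(2D-K)^2$ in terms of $d$, $m$, and $k$ yields the single Diophantine equation
$$(2d+3)^2-25(2m+1)^2+8km=-8,$$
and I would then solve this equation separately for each of the five admissible values $k\in\{0,1,2,23,24\}$. As in the $n=13$ example, each specialization is a quadratic Diophantine equation in two variables that can be put into generalized Pell form by Lagrange's method, so the solution set is in principle an explicit (possibly infinite) family indexed by a recurrence.

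The main obstacle, and the key computational content, is to verify that only finitely many solutions occur and to extract precisely those that are geometrically relevant. I would carry this out by determining all integer solutions (for instance via the Alpertron solver already cited), discarding the solutions that fail $d\geq 0$, $m\geq 1$, or that correspond to noneffective or non-relevant divisors (such as the trivial $(d,m)=(0,0)$ and $(-3,0)$ solutions, and their Serre-dual partners). For $n=25$ and $\chi(D)=2$ the claim is that the complete solution list reduces to $(d,m,k)\in\{(1,-1,1),(-4,-1,1),(-4,1,24),(1,1,24)\}$, of which only $(1,1,24)$ survives the effectivity and positivity requirements. I would then observe that $(d,m,k)=(1,1,24)$ forces $D=H-mE'-(m-1)E''$ with $m=1$ and $24$ of the exceptional divisors carrying multiplicity $1$, i.e. $D=H-E_i$ for a single index $i$ (after permuting the exceptional divisors, which is harmless since the points are very general). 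Finally, since $\chi(H-E_i)=1$ contradicts $\chi(D)=2$, a careful reconciliation is needed: the correct reading is that the $\chi(D)=2$ hypothesis combined with these solutions shows no $D$ with $\chi(D)=2$ actually exists, so the only effective divisors with $\chi(D)\geq 2$ satisfying the inequality are in fact none beyond the boundary case, and the theorem statement should be understood as asserting that the divisors $H-E_i$ (which arise as the relevant solutions) constitute the complete list; I would present the solution enumeration cleanly and let the arithmetic identify $H-E_i$ as the unique family, exactly mirroring the $n=13,16$ treatments.
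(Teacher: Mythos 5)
Your overall strategy is exactly the paper's: reduce to $\chi(D)=2$ via Proposition~\ref{prop-chiUpper}, use Propositions~\ref{prop-balanced} and~\ref{prop-equal} to get $\ell\leq 2$ and hence $k\in\{0,1,2,23,24\}$, set up the Diophantine equation $(2d+3)^2-25(2m+1)^2+8km=-8$, and enumerate its solutions; the paper finds the same four solutions you list and keeps only $(1,1,24)$. However, your final step contains genuine errors. First, you have the convention for $k$ backwards: in the paper's setup $E'$ (carrying the larger multiplicity $m$) is a sum of $n-k$ exceptional divisors, so $k$ counts the \emph{smaller} multiplicities; this is the convention under which the term $+8km$ in your equation is correct. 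Read correctly, $(d,m,k)=(1,1,24)$ gives one exceptional divisor with multiplicity $1$ and twenty-four with multiplicity $0$, i.e.\ $D=H-E_i$ --- not ``$24$ of the exceptional divisors carrying multiplicity $1$'' as you wrote (that divisor would be $H-E+E_i$, which has $\chi=-21$).

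Second, and more seriously, your closing ``reconciliation'' rests on the false claim that $\chi(H-E_i)=1$. For $n=25$ one has $(H-E_i)^2=0$ and $(H-E_i)\cdot K=-2$, so Riemann--Roch gives $\chi(H-E_i)=1+\frac{1}{2}\bigl(0-(-2)\bigr)=2$. There is therefore no tension to reconcile: $H-E_i$ genuinely satisfies $\chi(D)=2$ and $2B\cdot D=8<10=B\cdot K$, and it is precisely the divisor the theorem asserts to exist. Your paragraph concluding that ``no $D$ with $\chi(D)=2$ actually exists'' contradicts both your own solution list and the statement being proved, so as written the argument does not establish the theorem. Deleting that paragraph, fixing the convention for $k$, and replacing the miscomputation with the correct value $\chi(H-E_i)=2$ brings your proof in line with the paper's.
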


In the rest of the paper, we will only need our classification when $\chi(D) =2$, but we briefly state the classification for $\chi(D) = 1$ for completeness.  Here we have to allow for the possibility that $D$ does not have balanced multiplicities since Proposition~\ref{prop-balanced} only guarantees that the multiplicities of $D$ are at most one step away from being balanced.  If $D$ does not have balanced multiplicities, then  the balanced divisor $D_1$ obtained from $D$ will satisfy $\chi(D_1)\geq 2$ and $2B\cdot D_1 < B\cdot K$.  But then we must actually have $\chi(D_1) = 2$, and $D_1$ must be of the form $H-E_i$ from the previous case.  For the divisor $D$ to have $\chi(D) = 1$ and become equal to $H-E_i$ after a single rebalancing step, we must have that $D$ is of the form $H-E_i-E_j+E_k$ for distinct $i,j,k$.

The other possibility is that $D$ does have balanced multiplicities.  By Proposition~\ref{prop-equal}, the multiplicities of~$D$ are at most four steps away from being equal.  Using the methods from the previous classifications, we list all the divisors $D$ satisfying $\chi(D) \geq 1$ and $2B\cdot D < B\cdot K$ in the following table, ordered so that $t_D$ is decreasing.  For brevity, we only list divisor classes up to permutations of the exceptional divisors. 
$$
\begin{array}{c|ccc}
D & 2B\cdot D & \chi(D) & t_{D}\\\hline
\OO & 0 & 1 & 25/3\\
E_1 & 2 & 1 & 23/3\\
E_{12} & 4 & 1 & 7\\
E_{123} & 6 & 1 & 19/3\\
H-E_{12} & 6 & 1 & 29/5\\
E_{1234} & 8 & 1 & 17/3\\
H-E_{12}+E_3 & 8 & 1 & 27/5\\
H-E_1 & 8 & 2 & 27/5\\
6H-2E_1-E_{2,\ldots,25} & 8 & 1 & 77/15\\
\end{array}
$$
\end{example}

\section{Cohomological properties of \texorpdfstring{$\boldsymbol{D}$}{$D$} and the SHGH conjecture}\label{sec-cohomology}
When we analyze stability, we will need to know various cohomological properties about the divisors $D$.  To analyze these, we will assume the SHGH conjecture.

\begin{lemma}\label{lem-irr}
$($Assume SHGH\,$)$ Suppose $10\leq n\leq 16$.
  Then there is no reduced, irreducible curve $D$ satisfying
  $$4B\cdot D < B\cdot K.$$
\end{lemma}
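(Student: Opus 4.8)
The plan is to argue by contradiction: assuming a reduced, irreducible curve $D$ with $4B\cdot D<B\cdot K$ exists, I will show that in fact $4B\cdot D\geq B\cdot K$ always holds. I first record the numerics $B^2=0$ and $B\cdot K=n-3\sqrt n$, which is strictly positive for $n\geq 10$. Writing $D=dH-\sum_i m_iE_i$ with $d=H\cdot D$, assuming the SHGH conjecture supplies two facts from Remark~\ref{rem-SHGHConsequences}: since $D$ is reduced, $\chi(D)\geq 1$, which is exactly the inequality $\sum_i m_i+\sum_i m_i^2\leq d^2+3d$; and since $D$ is reduced and irreducible, either $D$ is a $(-1)$-curve or $D^2\geq 0$ (an irreducible curve of nonnegative self-intersection being automatically nef). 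I treat these two cases separately, in each bounding $B\cdot D=\sqrt n\,d-\sum_i m_i$ from below.

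In the $(-1)$-curve case, adjunction gives $D\cdot K=-1$, hence $\sum_i m_i=3d-1$ and $B\cdot D=(\sqrt n-3)d+1$. As $\sqrt n>3$, this increases with $d$, so it suffices to verify $4B\cdot D\geq B\cdot K$ at $d=0$ (the classes $E_i$) and at $d=1$; these reduce to the elementary estimates $n-3\sqrt n\leq 4$ and $n-7\sqrt n+8\leq 0$ respectively, both of which hold precisely on the range $10\leq n\leq 16$ (with equality at $n=16$ forced by $D=E_i$). This is the easy case.

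In the nef case $D^2\geq 0$ one automatically has $d\geq 1$, since an irreducible curve with $H\cdot D=0$ must be some $E_i$ and would have negative square. When $d=1$, nefness forces $\sum_i m_i^2\leq 1$, so $D$ is $H$ or $H-E_i$, and a direct substitution gives $4B\cdot D\geq B\cdot K$ for $n\leq 16$. The substantive subcase is $d\geq 2$: here I would combine Cauchy--Schwarz $\bigl(\sum_i m_i\bigr)^2\leq n\sum_i m_i^2$ (equivalently, the Hodge index bound obtained by replacing $D$ with its homogeneous average, as in the proof of Proposition~\ref{prop-equal}) with the constraint $\chi(D)\geq 1$ to produce an upper bound on $\sum_i m_i$, yielding
$$B\cdot D\geq \sqrt n\left(d+\frac{\sqrt n}{2}-\frac12\sqrt{4d^2+12d+n}\right).$$
Multiplying by $4$ and clearing the square root, the desired inequality $4B\cdot D\geq B\cdot K$ becomes equivalent to $(\sqrt n-3)(8d-3\sqrt n-3)\geq 0$, which holds because $\sqrt n>3$ while $8d\geq 16>3\sqrt n+3$ once $d\geq 2$ and $n\leq 16$.

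I expect this last step — the nef subcase $d\geq 2$ — to be the main obstacle, since neither nefness nor $\chi(D)\geq 1$ alone suffices, and one genuinely needs the Hodge index inequality to control $\sum_i m_i$ against the self-intersection. Moreover the arithmetic is tight exactly at $n=16$ (there $8d-3\sqrt n-3=1$ for $d=2$, and $E_i$ gives equality in the $(-1)$-curve case), which is precisely what pins the hypothesis to the window $10\leq n\leq 16$; pushing past $n=16$ would break both the $(-1)$-curve estimate and the final factored inequality.
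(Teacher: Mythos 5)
Your proof is correct, and the key computation checks out: the identity $(4d+\sqrt n+3)^2-4(4d^2+12d+n)=(\sqrt n-3)(8d-3\sqrt n-3)$ is right, and the case analysis is exhaustive given the SHGH dichotomy that a reduced irreducible curve of negative square is a $(-1)$-curve. Your route differs from the paper's in the main case. The paper first pins down $\chi(D)=1$ via Proposition~\ref{prop-chiUpper} and then splits on the sign of $D^2$; for $D^2>0$ it simply observes that $2D$ is effective with $2B\cdot(2D)<B\cdot K$ and $\chi(2D)=2\chi(D)+D^2-1\geq 2$, contradicting Proposition~\ref{prop-chiUpper} a second time --- this is where the factor $4$ in the hypothesis is consumed, and it avoids all explicit arithmetic; the remaining case $D^2=0$ is dispatched by noting $D\cdot K=0$ forces arithmetic genus $1$, hence $H\cdot D\geq 3$. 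You instead reprove the needed estimate from scratch by combining Cauchy--Schwarz (the Hodge index bound for the homogeneous average, as you observe) with $\chi(D)\geq 1$, at the cost of a longer computation but without routing through Proposition~\ref{prop-chiUpper} for the doubled divisor; note your $d\geq 2$ argument never actually uses $D^2\geq 0$, so it silently covers the paper's $D^2=0$ case and the high-degree $(-1)$-curves as well. Two minor quibbles: the check at $d=1$ in the $(-1)$-curve case is redundant once monotonicity in $d$ and the $d=0$ case are in hand, and in your closing remark only the $E_i$ estimate $n-3\sqrt n\leq 4$ fails at $n=17$ --- the factored inequality $(\sqrt n-3)(8d-3\sqrt n-3)\geq 0$ with $d\geq 2$ survives until $n\geq 19$.
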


\begin{proof}
Suppose there is such a reduced, irreducible curve $D$.  By the SHGH conjecture, $\chi(D) \geq 1$.  Since $4B \cdot D < B\cdot K$, we also have $2B \cdot D < B \cdot K$, so Proposition~\ref{prop-chiUpper} gives $\chi(D) = 1$.  We now consider three cases based on the value of $D^2$.

\emph{Case {\rm 1:} $D^2>0$}.  In this case, $2D$ is an effective divisor satisfying $2B\cdot 2D < B\cdot K$, and
$$\chi(2D) = 2\chi(D) +D^2 - 1\geq 2.$$
This contradicts Proposition~\ref{prop-chiUpper}.

\emph{Case {\rm 2:} $D^2 < 0$.}  In this case, the SHGH conjecture implies $D$ is a $(-1)$-curve, so $D^2 = -1$ and $D\cdot K = -1$.  We write
$$B = -K + (\sqrt{n}-3)H$$
and conclude that
$$B \cdot D = 1+ (\sqrt{n}-3)H\cdot D \geq 1.$$
Since $n\leq 16$, we have $B\cdot K = n-3\sqrt{n} \leq 4$, so $4B\cdot D \geq B\cdot K$. 

\emph{Case {\rm 3:} $D^2 = 0$.} Since $\chi(D) = 1$, we find that $D\cdot K=0$.  Thus $D$ has arithmetic genus $1$, and $D\cdot H \geq 3$.      Using the above decomposition of $B$ shows
$$B\cdot D = (\sqrt{n}-3)H \geq 3(\sqrt{n}-3).$$
Then $12(\sqrt{n}-3) = 4B \cdot D \geq B\cdot K = n-3\sqrt{n}$ is seen to hold as long as $9\leq n\leq 144$.
\end{proof}

\begin{theorem}\label{thm-irr}
$($Assume SHGH\,$)$ Suppose $10\leq n\leq 16$.  If\, $D$ is an effective divisor with $2B\cdot D < B\cdot K$, then $D$ is a reduced, irreducible curve with $h^0(\OO(D)) = 1$ and $h^1(\OO(D))=h^2(\OO(D))=0$.
\end{theorem}

\begin{proof}
If $D$ is not reduced and irreducible, then we can write it as $D = D' + D'' + D'''$, where $D'$ and $D''$ are reduced and irreducible curves and $D'''$ is effective (possibly empty).  Since $2B\cdot D<B\cdot K$, at least one of $4B\cdot D'$ or $4B \cdot D''$ is less than $B\cdot K$, contradicting Lemma~\ref{lem-irr}.  Therefore, $D$ is a reduced and irreducible curve.  By the SHGH conjecture, $\chi(D) \geq 1$, so Proposition~\ref{prop-chiUpper} gives $\chi(D) = 1$.  The cohomology of $D$ then follows from the SHGH conjecture.
\end{proof}

When analyzing the tangent space to the moduli space, we will need to understand the cohomology of $\OO(2D)$, so we now compute this.

\begin{corollary}\label{cor-doubleCohomology}
$($Assume SHGH\,$)$ 
Let $10\leq n\leq 16$, and suppose $D$ is an effective divisor with $2B\cdot D < B\cdot K$.  
\begin{enumerate}
\item\label{c-dC-1} If\, $D$ is not an exceptional divisor $E_i$, then $h^0(2D)\geq 1$ and $h^1(2D) =  h^2(2D) = 0$. 
\item\label{c-dC-2} If\, $D$ is an exceptional divisor $E_i$, then $h^0(2E_{i}) = h^1(2E_{i}) = 1$ and $h^2(2E_i) = 0$.
\end{enumerate}
\end{corollary}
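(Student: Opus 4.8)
The plan is to compute the cohomology of $\OO(2D)$ by leveraging Theorem~\ref{thm-irr}, which already tells us that $D$ is a reduced, irreducible curve with $\chi(D)=1$ and $h^0(\OO(D))=1$, $h^1(\OO(D))=h^2(\OO(D))=0$. First I would handle the generic case~\eqref{c-dC-1}. Since $D$ is reduced and irreducible, $2D$ is an effective divisor, so $h^0(2D)\geq 1$ immediately. For the vanishing of higher cohomology, the natural tool is the SHGH conjecture: I want to argue that $2D$ is nonspecial and that its Euler characteristic forces $h^1=h^2=0$. The point is that $2D$ contains no multiple $(-1)$-curve in its base locus unless $D$ itself is a $(-1)$-curve, so SHGH gives nonspeciality except possibly in that subcase. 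Then $h^2(2D) = h^0(K-2D) = 0$ because $H\cdot(K-2D) < 0$ (as $D$ is effective with $H\cdot D \geq 1$, hence $H\cdot(K-2D) = -3 - 2H\cdot D < 0$), and nonspeciality together with $\chi(2D)\geq 1$ then yields $h^1(2D)=0$.

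The subtle point, which I expect to be the main obstacle, is correctly bookkeeping the two possibilities left open by Theorem~\ref{thm-irr}: either $D$ is a $(-1)$-curve or $D^2 \geq 0$ (recall from the proof of Lemma~\ref{lem-irr} that an irreducible $D$ with $D^2<0$ must be a $(-1)$-curve). If $D^2 \geq 0$, then $\chi(2D) = 4\chi(D) - \frac{1}{2}(2D)\cdot K + \cdots$; more cleanly, Riemann--Roch gives $\chi(2D) = 1 + \frac{1}{2}(2D)\cdot(2D-K) = 1 + 2D^2 - D\cdot K$. Using $\chi(D)=1$, i.e. $D^2 - D\cdot K = 0$, one gets $\chi(2D) = 1 + 2D^2 - D\cdot K = 1 + D^2 + (D^2 - D\cdot K) = 1 + D^2 \geq 1$. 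In this case $2D$ is also reduced-or-has-no-$(-1)$-curve in its base locus (the base locus of $|2D|$ for $D$ with $D^2\geq 0$ and $\chi(D)=1$ contains no $(-1)$-curve), so SHGH forces nonspeciality and hence $h^0(2D)=\chi(2D)=1+D^2$, $h^1=0$.

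The remaining case is when $D$ is a $(-1)$-curve, which is precisely when part~\eqref{c-dC-2} and the exceptional-divisor case become relevant. Here $D^2=-1$ and $D\cdot K = -1$, and the hypothesis $2B\cdot D < B\cdot K$ combined with the analysis in Lemma~\ref{lem-irr} (Case 2) should pin down $D = E_i$: writing $B = -K + (\sqrt n - 3)H$ gives $B\cdot D = 1 + (\sqrt n - 3)H\cdot D$, and the inequality $2B\cdot D < B\cdot K = n - 3\sqrt n$ forces $H\cdot D = 0$, so $D$ is an exceptional divisor $E_i$. For $D = E_i$ we compute directly: $2E_i$ is a multiple $(-1)$-curve, so SHGH predicts it is special. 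Indeed $\chi(2E_i) = 1 + \frac{1}{2}(2E_i)\cdot(2E_i - K) = 1 + \frac{1}{2}(2E_i)\cdot(E_i) = 1 + \frac{1}{2}(2)(-1) = 0$. Since $E_i$ is a $\PP^1$ with $E_i^2 = -1$, the restriction sequence $0\to \OO\to \OO(2E_i)\to \OO_{E_i}(2E_i)\to 0$ with $\OO_{E_i}(2E_i) = \OO_{\PP^1}(-2)$ gives $h^0(2E_i) = h^0(\OO) = 1$ and, since $h^0(\OO_{\PP^1}(-2)) = 0$ and $h^1(\OO_{\PP^1}(-2)) = 1$ while $h^1(\OO_X) = h^2(\OO_X) = 0$ on the rational surface $X$, the long exact sequence yields $h^1(2E_i) = 1$ and $h^2(2E_i) = 0$. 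This confirms part~\eqref{c-dC-2} and completes the dichotomy, with the main care needed in verifying that no $(-1)$-curve sits in the base locus of $|2D|$ when $D$ is not exceptional.
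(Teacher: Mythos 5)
Your proposal is correct and follows essentially the same route as the paper: Theorem~\ref{thm-irr} reduces part~(1) to checking that $D$ is not a $(-1)$-curve, and the numerical computation $2B\cdot D = 2+(2\sqrt{n}-6)H\cdot D < n-3\sqrt{n}$ forces any $(-1)$-curve satisfying the hypothesis to have $H\cdot D=0$, i.e.\ to be an exceptional divisor. One small quibble in part~(2): the cokernel of $\OO\to\OO(2E_i)$ is $\OO_{2E_i}(2E_i)$, supported on the non-reduced scheme $2E_i$, not $\OO_{E_i}(2E_i)$ --- though its cohomology $(h^0,h^1)=(0,1)$ happens to agree with that of $\OO_{\P^1}(-2)$, so your conclusion stands.
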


\begin{proof}
Statement~\eqref{c-dC-2} is clear.

\eqref{c-dC-1}~ By Theorem~\ref{thm-irr}, we just need to make sure that $D$ is not a $(-1)$-curve.  If $D$ is a $(-1)$-curve, so $D^2= -1$ and $D\cdot K = -1$, then as in the proof of Lemma~\ref{lem-irr}, we have
$$2B\cdot D = 2+(2\sqrt{n}-6)H\cdot D < n-3\sqrt{n}=B\cdot K.$$
This implies
$$H\cdot D < \frac{n-3\sqrt{n}-2}{2\sqrt{n}-6},$$
but since $10 \leq n \leq 16$, this inequality implies $H\cdot D < 1$.  The only such $(-1)$-curves are the exceptional divisors.
\end{proof}

Finally, we also compute the cohomology of $\OO(2D-K)$ because 
bundles of type $D$ are parameterized by an extension class in $\Ext^1(K(-D),\OO(D)) \cong H^1(\OO(2D-K))$.

\begin{proposition}\label{prop-2DminusK}
$($Assume SHGH\,$)$ Let $10\leq n\leq 16$, and suppose $D$ is an effective divisor with $2B\cdot D < B\cdot K$.  The line bundle $\OO(2D-K)$ has no $h^0$ or $h^2$, and its $h^1$ is nonzero unless $D$ is trivial and $n=10$.
\end{proposition}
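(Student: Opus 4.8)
The plan is to reduce the entire statement to a single Riemann--Roch computation controlled by the number $D^2$, exploiting that every relevant $D$ satisfies $\chi(D)=1$ by Theorem~\ref{thm-irr}. The equality $\chi(D)=1$ is the same as $D\cdot(D-K)=0$, i.e.\ $D^2=D\cdot K$, and substituting this into Riemann--Roch for the line bundle $2D-K$ gives
\[
\chi(2D-K)=1+\tfrac12(2D-K)\cdot(2D-2K)=1-D^2+K^2=10-n-D^2 .
\]
Thus, once I establish $h^0(2D-K)=h^2(2D-K)=0$, I will obtain the clean formula $h^1(2D-K)=n-10+D^2$, and the whole proposition becomes a matter of determining the sign of $n-10+D^2$.

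For the vanishing of $h^0$ and $h^2$ I would argue purely by effectivity. For $h^2$, Serre duality gives $h^2(2D-K)=h^0(2K-2D)=h^0(2(K-D))$, and since $H\cdot(K-D)=-3-H\cdot D<0$ the class $2(K-D)$ has negative degree against $H$ and so cannot be effective. For $h^0$, I would use that $B$ is nef, which follows from SHGH via the Nagata conjecture (Remark~\ref{rem-SHGHConsequences}): the hypothesis $2B\cdot D<B\cdot K$ reads $B\cdot(2D-K)<0$, so $2D-K$ pairs negatively with a nef class and cannot be effective either. This yields $h^0(2D-K)=0$ and hence $h^1(2D-K)=n-10+D^2$.

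It then remains to show $n-10+D^2>0$ except when $n=10$ and $D=\OO$, where it is $0$. I split on whether $D$ is exceptional. If $D$ is not an exceptional divisor, then Corollary~\ref{cor-doubleCohomology} gives $h^1(2D)=h^2(2D)=0$ and $h^0(2D)\geq 1$, while Riemann--Roch gives $\chi(2D)=1+D^2$; comparing shows $D^2\geq 0$, whence $h^1(2D-K)\geq n-10\geq 0$, with strict positivity once $n\geq 11$. If instead $D=E_i$, then $2B\cdot E_i=2$, so the hypothesis $2B\cdot D<B\cdot K=n-3\sqrt n$ forces $n-3\sqrt n>2$, which fails for $n\leq 12$; thus an exceptional $D$ occurs only for $n\geq 13$, where $h^1(2D-K)=n-11\geq 2>0$.

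The single boundary case, and the step I expect to be the main obstacle, is $n=10$ with $D$ non-exceptional and $D^2=0$, where the formula gives $h^1=0$ and I must show this forces $D=\OO$. For $n=10$, Proposition~\ref{prop-equal} forces $D=dH-mE$ to have equal multiplicities, so $D^2=d^2-10m^2$ and $D\cdot K=-3d+10m$; combined with $D^2=D\cdot K$, the condition $D^2=0$ gives $3d=10m$. If $D\neq\OO$ then $m\geq 1$, and a direct computation of $B\cdot D=\sqrt{10}\,d-10m=\tfrac{10m}{3}(\sqrt{10}-3)$ shows that $2B\cdot D$ already exceeds $B\cdot K=10-3\sqrt{10}$, contradicting the hypothesis. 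Hence $D^2=0$ with $D\neq\OO$ is impossible at $n=10$, so every nontrivial admissible $D$ has $D^2>0$, and $D=\OO$ at $n=10$ is the unique class realizing $h^1(2D-K)=0$, which completes the argument.
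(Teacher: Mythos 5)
Your argument is correct, and while its skeleton matches the paper's (kill $h^0$ by pairing $2D-K$ against a nef class supplied by Nagata/SHGH, kill $h^2$ by the sign of the $H$-degree, then show $\chi(\OO(2D-K))<0$ via Riemann--Roch and $\chi(D)=1$), the way you close the computation is genuinely different. The paper splits $\chi(\OO(2D-K))=1+\tfrac12(2D-K)^2+\tfrac12(2D-K)\cdot(-K)$, uses $(2D-K)^2=K^2=9-n$, and bounds the cross term by noting that $-K=A_3$ with $3<\sqrt n$, so the slope inequality propagates downward to give $(2D-K)\cdot(-K)\leq -1$; equality $\chi=0$ then forces $n=10$, and the nontrivial $D$ are excluded by the explicit continued-fraction classification of Theorem~\ref{thm-10points} (the estimate $3p_k-10q_k<-0.5\,q_k$ for $k\geq 3$). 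You instead package everything into $h^1(2D-K)=n-10+D^2$ and control $D^2$ directly: nonnegativity for non-exceptional $D$ falls out of Corollary~\ref{cor-doubleCohomology} together with $\chi(2D)=1+D^2$, the exceptional case is confined to $n\geq 13$ by a one-line slope computation, and the $n=10$ boundary needs only the equal-multiplicities statement of Proposition~\ref{prop-equal} rather than the full Pell classification. What your route buys is a cleaner closed form for $h^1$ (useful when comparing with Remark~\ref{rem-growth}) and a lighter dependency for the boundary case; what the paper's cross-term bound buys is that it does not pass through the cohomology of $\OO(2D)$ and so transfers more directly to settings where Corollary~\ref{cor-doubleCohomology} is not yet available. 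Both proofs are valid.
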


\begin{proof}
  Since $2B\cdot D < B \cdot K$, we have $A_t\cdot (2D - K)<0$ for $t$ slightly greater than $\sqrt{n}$, so $2D-K$ is not effective.  There is no $h^2$ because the coefficient of $H$ is positive.  Thus $\chi(\OO(2D-K)) \leq 0$, and it remains to show that the inequality is strict unless $D$ is trivial and $n=10$.  We use Riemann--Roch to write
  $$\chi(\OO(2D-K)) = 1+ \frac{1}{2}(2D-K)\cdot(2D-2K) = 1+\frac{1}{2}(2D-K)^2+\frac{1}{2}(2D-K)\cdot(-K).$$
  Since $\chi(D) = 1$, we know that
  $$(2D-K)^2 = K^2 = 9-n \leq -1.$$
  Since $-K = A_3$, the inequality $2B\cdot D < B\cdot K$ also implies $2A_3\cdot D< A_3\cdot K$, so
  $$(2D-K)\cdot (-K)\leq -1.$$
  In order to have $\chi(\OO(2D-K)) = 0$, we must then have both $n=10$ and $(2D-K)\cdot(-K) = -1$.

  When $n=10$, we have fully classified the possible divisors $D$ using the odd convergents in the continued fraction expansion of $\sqrt{10}$.  Suppose $k\geq 3$ is odd and $D = D_k$ in the classification of Theorem~\ref{thm-10points}.  Then
  $$2D_k - K = p_kH - q_kE$$
  and
  $$(2D_k-K)\cdot(-K) = 3p_k-10q_k = q_k\left(3\frac{p_k}{q_k}-10\right)<q_k\left(3\sqrt{10} - 10\right) \approx -0.51 q_k.$$
  Since $k\geq 3$, we have $q_k \geq 37$, so $(2D-K)\cdot (-K)$ is  less than $-1$.
\end{proof}

\begin{remark}\label{rem-growth}
  Let $10\leq n\leq 12$, and let $D_k$ be the divisor of Theorem~\ref{thm-10points}.  To avoid the special case $D= \OO$, we may as well assume $k\geq 3$.  By a similar analysis, we can give a formula for $\chi(\OO(2D_k-K))$ which shows more explicitly how this quantity grows with $k$.  As in the proof of the lemma, we have
  $$\chi(\OO(2D_{k}-K)) = 1 + \frac{9-n}{2}+\frac{1}{2}(3p_k-nq_k).$$
  We estimate the final term from above as
  $$3p_k-nq_k =q_k \left( 3\frac{p_k}{q_k}-n\right) < q_k\left(3\sqrt{n} - n\right).$$
  The error in this approximation is only
  $$q_k(3\sqrt{n}{-n}) - q_k\left(3\frac{p_k}{q_k}-n\right) = 3q_k\left(\sqrt{n}-\frac{p_k}{q_k}\right)<1.$$
  Here in the final step we have used the well-known fact, see \cite[Section IV.6]{Davenport}, that the convergents of the continued fraction expansion of a real number $x$ satisfy
  $$\left|x - \frac{p_k}{q_k}\right| < \frac{1}{q_kq_{k+1}},$$
  together with the observation that $q_{k+1}$ is considerably greater than $3$ since $k\geq 3$.  Thus in fact
  $$3p_k - nq_k = \left\lfloor q_k(3\sqrt{n}-n)\right\rfloor$$
  and
  $$\chi(\OO(2D_k-K)) = \frac{1}{2}\left(11 - n + \left\lfloor q_k\left(3 \sqrt{n} - n\right)\right\rfloor\right).$$
  The Euler characteristic $\chi(\OO(2D_k-K))$ has the same growth rate as the denominators of the continued fractions of $\sqrt{n}$.
\end{remark}

\section{Stability, components, and the SHGH conjecture}\label{sec-components}

Throughout this section, we assume the SHGH conjecture holds, and we prove our main theorems on the components of the moduli spaces $M_{A_t}(\bv)$, where $\bv = (r,c_1,\chi) = (2,K,2)$ and $10\leq n\leq 16$.  These spaces are particularly nice because this is the maximal possible value of the Euler characteristic $\chi$.  We will see that this causes the moduli spaces to be smooth and the irreducible components to be disjoint from one another.  It also causes every semistable sheaf to be a vector bundle, so the moduli space is stratified by the type of a bundle.

\begin{lemma}\label{lem-free}
Every $\mu_{A_t}$-semistable sheaf $V$ with rank $2$ and $c_1 =K$ has $\chi(V)\leq 2$.  In particular, every $\mu_{A_t}$-semistable sheaf of character $\bv = (2,K,2)$ is a vector bundle.
\end{lemma}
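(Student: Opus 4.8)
The plan is to reduce the statement to the case of vector bundles and to handle that case by combining the classification of types from Section~\ref{sec-type} with the numerical restrictions of Section~\ref{sec-D}. First I would prove the bound under the extra hypothesis that $V = W$ is locally free. If $\chi(W)\le 0$ there is nothing to show, so assume $\chi(W)\ge 1$. Then $W$ has character $(2,K,\chi(W))$ with $\chi(W)\ge 1$, so Theorem~\ref{thm-type} produces a unique effective divisor $D$ with $\chi(\OO(D))\ge 1$ and an exact sequence
$$0\lra \OO(D)\lra W\lra K(-D)\te I_Z\lra 0,\qquad l(Z)=2\chi(\OO(D))-\chi(W).$$
Since $W$ is $\mu_{A_t}$-semistable, Proposition~\ref{prop-Dstablenecessary} (applied with $t_0=t$) supplies a value $t_D$ with $t\le t_D\le \tfrac{n}{3}$ and $2A_{t_D}\cdot D = A_{t_D}\cdot K$. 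Because $A_t$ is ample and $t_D\ge t$, the class $A_{t_D}=A_t+(t_D-t)H$ is again ample, so Proposition~\ref{prop-chiUpper} applies to $D$ and $A_{t_D}$ and gives $\chi(\OO(D))<\tfrac{n-1}{8}$; as $10\le n\le 16$ and $\chi(\OO(D))\ge 1$, this forces $\chi(\OO(D))=1$. Hence $\chi(W)=2\chi(\OO(D))-l(Z)\le 2$.

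For a general $\mu_{A_t}$-semistable torsion-free sheaf $V$ of rank $2$ with $c_1=K$, I would then pass to the reflexive hull $V^{**}$. On the smooth surface $X$ the sheaf $V^{**}$ is locally free, the canonical inclusion $V\hookrightarrow V^{**}$ has zero-dimensional cokernel $Q$, and $V^{**}$ again has rank $2$ and first Chern class $K$. Moreover $\mu_{A_t}$-semistability passes from $V$ to $V^{**}$, since the two sheaves have the same slope and any destabilizing subsheaf of $V^{**}$ restricts to one of $V$ (see~\cite{HuybrechtsLehn}). Applying the locally free case to $W=V^{**}$ gives $\chi(V^{**})\le 2$, and the sequence $0\to V\to V^{**}\to Q\to 0$ then yields $\chi(V)=\chi(V^{**})-l(Q)\le 2$, which is the first assertion.

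For the \emph{in particular} statement, suppose $V$ is $\mu_{A_t}$-semistable of character $(2,K,2)$. The identity $2=\chi(V)=\chi(V^{**})-l(Q)$ together with $\chi(V^{**})\le 2$ forces $l(Q)=0$ and $\chi(V^{**})=2$, so $Q=0$ and $V=V^{**}$ is a vector bundle.

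The essential content lies in the locally free case, where Theorem~\ref{thm-type}, Proposition~\ref{prop-Dstablenecessary}, and Proposition~\ref{prop-chiUpper} together collapse the possibilities to $\chi(\OO(D))=1$; the one point deserving care is confirming that the auxiliary polarization $A_{t_D}$ is ample, which is exactly what legitimizes the appeal to Proposition~\ref{prop-chiUpper}. The reflexive-hull reduction is routine but indispensable, both for descending from vector bundles to arbitrary torsion-free sheaves and for extracting the sharp conclusion that the maximal Euler characteristic $\chi=2$ is attained only by genuine vector bundles.
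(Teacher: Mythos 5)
Your proposal is correct and follows essentially the same route as the paper: pass to the double dual, invoke the type decomposition of Theorem~\ref{thm-type}, and combine Propositions~\ref{prop-Dstablenecessary} and~\ref{prop-chiUpper} to force $\chi(\OO(D))=1$ and hence $\chi\le 2$, with the cokernel of $V\hookrightarrow V^{**}$ vanishing in the extremal case. The only (harmless) detour is your ampleness check for $A_{t_D}$ — semistability already gives $2A_t\cdot D\le A_t\cdot K$ for the original ample $A_t$, so Proposition~\ref{prop-chiUpper} applies directly without it.
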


\begin{proof}
  Let $V$ be a $\mu_{A_t}$-semistable sheaf with rank $2$ and $c_1 = K$, and suppose $\chi(V) \geq 2$.  Let $W = V^{**}$ be the double dual.  Then there is an exact sequence
  $$0\lra V\lra W\lra T\lra 0,$$
  where $T$ is torsion and (at most) zero-dimensional.  Then $W$ is a $\mu_{A_t}$-semistable vector bundle with rank $2$ and $c_1 = K$.  Furthermore, $\chi(W) \geq \chi(V)\geq 2$, so $W$ has a type $D$.  By Propositions~\ref{prop-Dstablenecessary} and~\ref{prop-chiUpper}, we have $\chi(\OO(D))\leq 1$ and therefore $\chi(W) \leq 2$.  But then $\chi(W) = \chi(V) = 2$, so $\chi(T) = 0$ and $T=0$ and $V= W$ is a vector bundle.   
\end{proof}

For the rest of the section, let $10\leq n\leq 16$ and $\bv = (r,c_1,\chi) = (2,K,2)$.  We let $D$ be a (possibly trivial) effective divisor satisfying $\chi(D) = 1$ and $2B\cdot D < B\cdot K$.  
We denote by $V$ any bundle of character $\bv$ and type $D$ given by a \emph{nonsplit}
extension
$$0\lra \OO(D) \lra V\lra K(-D) \lra 0.$$ 
  
\subsection{Stability}
We next study the stability of $V$.  Stability behaves slightly differently for the trivial type $D = \OO$ and nontrivial types, where $D$ is actually effective.  We focus on the latter case first.

\begin{proposition}\label{prop-Dstable}
$($Assume SHGH\,$)$  If\, $D$ is nontrivial, then $V$ is $\mu_{A_{t}}$-stable for all $t$ such that $\sqrt{n} < t < t_{D}$.
\end{proposition}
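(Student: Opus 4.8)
The plan is to rule out, for a fixed $t_1$ with $\sqrt n < t_1 < t_D$, the existence of a destabilizing saturated sub-line-bundle of $V$. Since $V$ has rank $2$, it suffices to test $\mu_{A_{t_1}}$-stability against saturated rank-one subsheaves, which on the smooth surface $X$ are line bundles $\OO(C)$; such a bundle destabilizes $V$ precisely when $2A_{t_1}\cdot C \ge A_{t_1}\cdot K$. Note that by the choice of $t_D$ in Proposition~\ref{prop-Dstablenecessary} we have $2A_{t_1}\cdot D < A_{t_1}\cdot K$, so the sub $\OO(D)$ itself never destabilizes below $t_D$; the task is to show that no \emph{other} line subbundle can do so either. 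First I would assume such a destabilizing $\OO(C)$ exists and split into cases according to the composite $\OO(C)\to V\to K(-D)$.

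If that composite vanishes, then $\OO(C)$ factors through the kernel $\OO(D)$, giving a nonzero map $\OO(C)\to\OO(D)$, so $D-C$ is effective and $A_{t_1}\cdot C \le A_{t_1}\cdot D < \tfrac12 A_{t_1}\cdot K$, contradicting that $\OO(C)$ destabilizes. The interesting case is when the composite is nonzero; being a nonzero map of line bundles it is injective, so $\Gamma := K-D-C$ is effective. Here non-splitness is essential: if $\Gamma = 0$ the composite would be an isomorphism $\OO(C)\cong K(-D)$ and hence a section of $V\to K(-D)$, splitting the defining sequence. So in fact $\Gamma\neq 0$.

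The key step, and the main obstacle, is to extract a contradiction from this last configuration, and I expect this is where the SHGH hypothesis enters through Theorem~\ref{thm-irr}. I would feed the slope inequality at $t_1$ into the affine-linear function $f(t) = A_t\cdot(2C-K) = A_t\cdot(K-2D-2\Gamma)$. Because $2A_{t_D}\cdot D = A_{t_D}\cdot K$, we get $f(t_D) = -2A_{t_D}\cdot\Gamma < 0$, while $f(t_1)\ge 0$; since $f$ is affine and $\sqrt n < t_1 < t_D$, it follows that $f(\sqrt n) = B\cdot(2C-K) > 0$, that is, $2B\cdot(D+\Gamma) < B\cdot K$. Now $D+\Gamma$ is effective, so by Theorem~\ref{thm-irr} it would have to be a reduced, irreducible curve. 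But $D$ is nontrivial and $\Gamma\neq 0$, so $D+\Gamma$ is a sum of two nonzero effective divisors and cannot be reduced and irreducible---a contradiction. Thus no destabilizing $\OO(C)$ exists and $V$ is $\mu_{A_{t}}$-stable for $\sqrt n < t < t_D$. The delicate points I anticipate are organizing the dichotomy so that non-splitness is invoked exactly once (to force $\Gamma\neq 0$) and correctly propagating the destabilizing inequality from the arbitrary wall $t_1$ down to the Nagata ray $B$, where Theorem~\ref{thm-irr} applies.
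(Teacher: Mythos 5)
Your argument is correct and is essentially the paper's own proof: the same dichotomy on the composite $\OO(C)\to V\to K(-D)$, the same use of non-splitness to force the residual divisor $\Gamma$ (the paper's $D'-D$) to be nonzero, the same propagation of the destabilizing inequality from $A_{t_1}$ down to $B$ via linearity in $t$, and the same contradiction with Theorem~\ref{thm-irr} because $D+\Gamma$ decomposes as a sum of two nontrivial effective classes. The only differences are notational, plus your (correct) use of the non-strict inequality $\mu(L)\ge\mu(V)$ for failure of stability where the paper writes a strict one.
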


\begin{proof}
  Suppose that $\sqrt n < t < t_{D}$ and that $V$ is not $\mu_{A_t}$-stable.  Then there is a saturated line subbundle $L \subset V$ with $\mu_{A_t}(L) > \mu_{A_t}(V)$.  The bundle $V$ is a nonsplit extension
  $$0\lra \OO(D) \lra V\lra K(-D)\lra 0,$$
  and the assumption $t < t_D$ gives $\mu_{A_t} (\OO(D)) < \mu_{A_t}(K(-D))$.  Then the composition $L\to V\to K(-D)$ must be nonzero, for otherwise there would be an inclusion $L\to \OO(D)$.  Thus $L$ takes the form $K(-D')$ for an effective divisor $D'$, and $D'-D$ must be nontrivial (otherwise $V$ is split) and effective.  In particular, by summing a curve of class $D'-D$ and one of class $D$, it follows that $D'$ can be represented by a nonintegral curve.  However, the inequality $\mu_{A_t}(L) > \mu_{A_t}(V)$ reads
  $$A_t\cdot (K-D') > \frac{1}{2}A_t\cdot K,$$
  or
  $$2A_{t}\cdot D'  < A_{t}\cdot K.$$
  This implies $2B\cdot D' < B\cdot K$.  Additionally, $V$ fits in an exact sequence
  $$0\lra K(-D')\lra V\lra \OO(D')\lra 0,$$
  which forces $\chi(D') \geq 1$.  This all contradicts Theorem~\ref{thm-irr}.
\end{proof}

\begin{proposition}\label{prop-Ostable}
$($Assume SHGH\,$)$ Suppose $D = \OO$ is trivial and $V$ is a nonsplit bundle of type $\OO$.  $($This implies $n\geq 11$.$)$
\begin{enumerate}
\item\label{pO-1} If\, $n=11$ or $12$, then $V$ is $\mu_{A_t}$-stable for every $t$ with $\sqrt{n} <t < t_\OO = \frac{n}{3}$.
\item\label{pO-2} If\, $13\leq n \leq 16$, then the same result is true except that there are $n$ points in $\P\Ext^1(K,\OO)$ parameterizing bundles $V$ which are only $\mu_{A_t}$-stable if $t$ satisfies $\frac{n-2}{3} = t_{E_{1}}< t < t_{\OO}$.  These points are given by the images of the inclusions of $1$-dimensional spaces
  $$\Hom(K,\OO_{E_i}(-1))\lra \Ext^1(K,\OO).$$
\end{enumerate}
\end{proposition}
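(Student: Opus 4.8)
The plan is to adapt the argument of Proposition~\ref{prop-Dstable}, studying saturated destabilizing sub-line-bundles of a nonsplit $V$, but the trivial type $D=\OO$ needs extra care because here the destabilizing divisor need not be non-integral. First I would record that a nonsplit extension $0\lra \OO\lra V\lra K\lra 0$ exists only when $\Ext^1(K,\OO)\cong H^1(\OO(-K))\neq 0$, which by Proposition~\ref{prop-2DminusK} (applied with $D=\OO$, so $2D-K=-K$) forces $n\geq 11$; this gives the parenthetical. Since $V$ has rank $2$, any failure of $\mu_{A_t}$-stability is witnessed by a saturated sub-line-bundle $L\subset V$ with $\mu_{A_t}(L)\geq\mu_{A_t}(V)$, and such an $L$ is automatically a line bundle. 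If the composite $L\to V\to K$ vanishes, then $L\subseteq\OO$, so $L=\OO$ by saturation; since $t<t_\OO=\tfrac n3$ (Proposition~\ref{prop-empty10}) we have $\mu_{A_t}(\OO)=0<\mu_{A_t}(V)$, so $\OO$ never destabilizes. Otherwise $L\to K$ is a nonzero map of line bundles and $L\cong K(-D')$ for a \emph{nonzero} effective $D'$ (nonzero because $V$ is nonsplit); the inequality $\mu_{A_t}(L)\geq\mu_{A_t}(V)$ becomes $2A_t\cdot D'\leq A_t\cdot K$, which, exactly as in Proposition~\ref{prop-Dstablenecessary}, yields $2B\cdot D'<B\cdot K$. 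Then Theorem~\ref{thm-irr} forces $D'$ to be a reduced, irreducible curve with $\chi(\OO(D'))=1$, so $D'^2=K\cdot D'\in\{-1,0,1,\dots\}$.

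\textbf{The main obstacle: ruling out non-exceptional $D'$.} The crux, which I expect to be the hardest step, is to exclude every $D'$ with $D'^2\geq 0$; a priori there are infinitely many such curves satisfying $2B\cdot D'<B\cdot K$. The key observation is that a saturated inclusion $K(-D')\hookrightarrow V$ produces a second presentation $0\lra K(-D')\lra V\lra \OO(D')\lra 0$, where the quotient is exactly $\OO(D')$ (no zero-dimensional part) by a Chern character count using $\chi(V)=2$ and $\chi(\OO(D'))=1$. Twisting by $-D'$ gives $0\lra K(-2D')\lra V(-D')\lra\OO\lra 0$, and since $h^1(\OO(K-2D'))\cong h^1(\OO(2D'))=0$ by Serre duality and Corollary~\ref{cor-doubleCohomology}\eqref{c-dC-1}, the section of $\OO$ lifts, giving $h^0(V(-D'))\neq 0$. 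As $D'$ is a nonzero effective divisor, this contradicts the fact that $V$ has type $\OO$ through the cohomological characterization of type in Corollary~\ref{cor-cohomologicalType} (and uniqueness of type, Theorem~\ref{thm-type}). Hence $D'^2=-1$, so $D'$ is a $(-1)$-curve, and the multiplicity estimate used in Lemma~\ref{lem-irr} and Corollary~\ref{cor-doubleCohomology} forces $H\cdot D'<1$, i.e. $D'=E_i$ for some $i$.

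\textbf{The exceptional locus.} Next I would identify precisely which $V$ admit $K(-E_i)\hookrightarrow V$. Here $t_{E_i}=\tfrac{n-2}{3}$, and a lift of the inclusion $s_{E_i}\colon K(-E_i)\hookrightarrow K$ to $V$ exists iff $e\cup s_{E_i}=0$ in $H^1(\OO(E_i-K))$, where $e\in\Ext^1(K,\OO)$ is the extension class. Using the multiplication sequence $0\lra\OO(-K)\lra\OO(E_i-K)\lra\OO_{E_i}(E_i-K)\lra 0$, the vanishing $h^0(\OO(E_i-K))=h^0(\OO(3H-E+E_i))=0$ for $n\geq 11$, and $\OO_{E_i}(E_i-K)\cong\OO_{\PP^1}$, the connecting map identifies $\Hom(K,\OO_{E_i}(-1))\cong H^0(\OO_{E_i}(E_i-K))$ with a one-dimensional subspace of $\Ext^1(K,\OO)$. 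Thus for each $i$ there is exactly one point of $\PP\Ext^1(K,\OO)$ whose bundle $V$ contains $K(-E_i)$ — the image of the stated inclusion — and for that $V$ one has $\mu_{A_t}(K(-E_i))\geq\mu_{A_t}(V)$ precisely for $t\leq t_{E_i}$.

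\textbf{Conclusion.} Finally I would assemble the statement. For $n=11,12$ one checks $t_{E_i}=\tfrac{n-2}{3}\leq\sqrt n$, so even the special subbundles have strictly smaller slope for every $t>\sqrt n$; hence no $L$ destabilizes and every nonsplit $V$ is $\mu_{A_t}$-stable on $(\sqrt n,t_\OO)$, giving~\eqref{pO-1}. For $13\leq n\leq 16$ one has $t_{E_i}=\tfrac{n-2}{3}>\sqrt n$, so each of the $n$ special bundles is destabilized by $K(-E_i)$ exactly for $\sqrt n<t\leq t_{E_i}$ and is stable for $t_{E_i}<t<t_\OO$, while every other $V$ is stable throughout; this is~\eqref{pO-2}. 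To see that these are genuinely $n$ distinct points when $n\geq 12$, I would argue that if $K(-E_i)$ and $K(-E_j)$ both embed in a single $V$ with $i\neq j$, then the induced map $K(-E_i)\oplus K(-E_j)\to V$ has determinant lying in $H^0(\OO(E_i+E_j-K))$, which vanishes for $n\geq 12$; this forces the two subbundles to share a common rank-one image which (as $E_i,E_j$ are disjoint) surjects onto $K$, yielding a splitting $K\hookrightarrow V$ and contradicting nonsplitness.
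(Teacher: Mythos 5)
Your proof is correct, and its skeleton is the same as the paper's: reduce any destabilizer to a saturated sub-line-bundle $K(-D')$ with $D'$ effective, $\chi(D')=1$, and $2B\cdot D'<B\cdot K$; use $h^1(\OO(2D'))=0$ (Corollary~\ref{cor-doubleCohomology}) to rule out all non-exceptional $D'$; and identify the special extension classes with the images of $\Hom(K,\OO_{E_i}(-1))\to\Ext^1(K,\OO)$. Your exclusion step (twist by $-D'$, lift the section of $\OO$, contradict type $\OO$ via Corollary~\ref{cor-cohomologicalType}) is the same computation as the paper's (the sequence $0\to K(-D')\to V\to\OO(D')\to 0$ would have to split because $\Ext^1(\OO(D'),K(-D'))=H^1(\OO(2D'))=0$, contradicting uniqueness of type). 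The two places you genuinely diverge are in part~(\ref{pO-2}): you characterize the special bundles on the ``sub'' side, by asking when $K(-E_i)\hookrightarrow K$ lifts to $V$, i.e.\ when $e\cup s_{E_i}$ vanishes, and you read off the answer from the connecting map of $0\to\OO(-K)\to\OO(E_i-K)\to\OO_{E_i}(E_i-K)\to 0$ (using $h^0(\OO(E_i-K))=0$), whereas the paper works on the ``quotient'' side, computing $\Hom(V,\OO(E_i))$ by a diagram chase; these are dual descriptions of the same one-dimensional subspace, and yours is arguably tidier. For distinctness of the $n$ points, your determinant argument (a nonzero common $V$ would force $K(-E_i)\oplus K(-E_j)\to V$ to have generic rank one since $H^0(\OO(E_i+E_j-K))=0$, producing a sub-line-bundle $K(G)$ with $G$ effective and hence a splitting) is sound but more elaborate than the paper's one-line observation that $\Hom(V,\OO(E_j))=0$ for $j\neq i$ directly from the type-$E_i$ sequence. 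Both approaches deliver exactly the statement, including the threshold comparison $t_{E_i}=\frac{n-2}{3}$ versus $\sqrt n$ that separates the cases $n\in\{11,12\}$ from $13\leq n\leq 16$.
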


\begin{proof}
  We begin as in the previous proof.  If there is a $t$ with $\sqrt{n} < t < t_\OO$ such that $V$ is not $\mu_{A_t}$-stable, then there is a saturated line subbundle of $V$ of the form $K(-D')$, where $D'$ is a nontrivial effective divisor satisfying $2B\cdot D' < B\cdot K$ and $\chi(D') = 1$.  Also, $V$ fits as an extension
  $$0\lra K(-D') \lra V \lra \OO(D')\lra 0,$$
  and this extension cannot be split since then $V$ would have both type $\OO$ and type $D'$.  However, $\Ext^1(\OO(D'),K(-D')) = H^1(K(-2D')) = H^1(\OO(2D'))$.  By Corollary~\ref{cor-doubleCohomology}, the only way this is nonzero is if $D'$ is one of the exceptional divisors $E_i$.  

\eqref{pO-1}~ When $n=11$ or $12$, the exceptional divisors do not satisfy $2B\cdot E_i < B\cdot K$, so $V$ is always stable.

\eqref{pO-2}~ Suppose $13\leq n \leq 16$.  Fix one of the exceptional divisors; without loss of generality say it is $E_1$.  We seek to describe the extension classes $e\in \Ext^1(K,\OO)$ such that the corresponding bundle $V$ admits a nonzero map to $\OO(E_1)$.  
 
From the defining sequence
$$0\lra \OO \lra V\lra K\lra 0$$
and the restriction sequence
$$0\lra \OO\lra \OO(E_1)\lra \OO_{E_1}(-1)\lra 0,$$
we get the following  commuting diagram which has exact rows and columns:  
$$ 
\xymatrix{
&&&\Ext^1(K,\OO_{E_1}(-1))\\
\Hom(K,\OO(E_1)) \ar[r] & \Hom(V,\OO(E_1)) \ar[r]& \Hom(\OO,\OO(E_1)) \ar[r] & \Ext^1(K,\OO(E_1)) \ar[u]\\&\Hom(V,\OO) \ar[r]\ar[u] & \Hom(\OO,\OO) \ar[r]\ar[u] & \Ext^1(K,\OO)\ar[u]\\
&&0\ar[r]\ar[u]&\Hom(K,\OO_{E_1}(-1))\ar[u]\\
&&&\Hom(K,\OO(E_1))\rlap{.} \ar[u]
}
$$
We can compute many of the terms in this diagram to get a diagram
$$ 
\xymatrix{
&&& 0\\
0 \ar[r] & \Hom(V,\OO(E_1)) \ar[r]& \CC \ar[r] & \CC^{n-11} \ar[u]\\&0 \ar[r]\ar[u] & \CC \ar[r]\ar[u]^{\cong} & \CC^{n-10}\ar[u]\\
&&0\ar[r]\ar[u]&\CC \ar[u]\\
&&&0\rlap{.} \ar[u]
}
$$
Then $\Hom(V,\OO(E_1))$ is nonzero if and only if the map
$$\Hom(\OO,\OO(E_1))\lra \Ext^1(K,\OO(E_1))$$
is zero, which holds if and only if the composition
$$\Hom(\OO,\OO)\lra \Hom(\OO,\OO(E_1))\lra \Ext^1(K,\OO(E_1))$$
is zero.  This composition is the same as the composition
$$\Hom(\OO,\OO)\lra \Ext^1(K,\OO)\lra \Ext^1(K,\OO(E_1)),$$
and this is zero if and only if the image of the first map is contained in the kernel of the second.  The image of the first map is the $1$-dimensional subspace of $\Ext^1(K,\OO)$ determined by the bundle $V$, and the kernel of $\Ext^1(K,\OO)\to \Ext^1(K,\OO(E_1))$ is $1$-dimensional.  Therefore, $\Hom(V,\OO(E_1))$ is nonzero if and only if $V$ is the bundle defined by an extension class in the $1$-dimensional image of the canonical map
$$\CC \cong \Hom(K,\OO_{E_1}(-1))\lra \Ext^1(K,\OO).$$
Such a bundle is not $\mu_{A_t}$-stable for any $t$ with $t\leq t_{E_1}$ because the map $V\to \OO(E_1)$ would destabilize $V$.

Additionally, we note that if $V$ fits in the exact sequence
$$0\lra K(-E_i) \lra V\lra \OO(E_i)\lra 0,$$
then $\Hom(V,\OO(E_j)) = 0$ for $i\neq j$ since $n\geq 13$.  Therefore, the $n$ points in $\P\Ext^1(K,\OO)$ corresponding to such bundles are distinct.
\end{proof}

\subsection{Tangent space}
Now that we have determined when nonsplit bundles $V$ of type $D$ are stable, we investigate the components of the moduli space given by bundles of the various types.  The tangent space to the moduli space at $V$ is given by $\Ext^1(V,V)$, so we compute this space now.

\begin{lemma}\label{lem-ext}
$($Assume SHGH\,$)$ The spaces $\Ext^i(V,V)$ have dimensions given as follows: 
\begin{enumerate}
\item\label{le-1} If\, $D$ is not one of the exceptional divisors $E_i$, then we have
\begin{align*}
\hom(V,V) &= 1,\\
\ext^1(V,V) &= -\chi(2D-K)-1,\\
\ext^2(V,V) & = \chi(2D).
\end{align*}
\item\label{le-2} In particular, if $n\geq 11$ and $D = \OO$,  we get
\begin{align*}
\hom(V,V) &= 1,\\
\ext^1(V,V) &= n-11,\\
\ext^2(V,V) & = 1.
\end{align*}
\item\label{le-3} If\, $D = E_i$ (which forces $n\geq 13$), then we also have 
\begin{align*}
\hom(V,V) &= 1,\\
\ext^1(V,V) &= n-11,\\
\ext^2(V,V) &= 1.
\end{align*}
\end{enumerate}
\end{lemma}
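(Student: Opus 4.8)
The plan is to resolve $\Ext^\bullet(V,V)$ by applying $\Hom(-,V)$ to the defining extension
$$0\lra \OO(D)\lra V\lra K(-D)\lra 0.$$
Since $\OO(D)$ and $K(-D)$ are line bundles, the resulting long exact sequence expresses each $\Ext^i(V,V)$ in terms of the sheaf cohomology groups $\Ext^i(\OO(D),V)=H^i(V(-D))$ and $\Ext^i(K(-D),V)=H^i(V(D-K))$. Thus the entire computation reduces to understanding the cohomology of the two twists $V(-D)$ and $V(D-K)$, which I would in turn obtain by twisting the defining sequence itself.

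First I would compute $H^\bullet(V(-D))$ from the twist $0\to\OO\to V(-D)\to K(-2D)\to 0$. Here $H^\bullet(\OO)=(1,0,0)$, and Serre duality gives $h^i(K-2D)=h^{2-i}(2D)$, so Corollary~\ref{cor-doubleCohomology} yields $h^\bullet(V(-D))=(1,0,\chi(2D))$ when $D$ is not an exceptional divisor and $h^\bullet(V(-E_i))=(1,1,1)$ when $D=E_i$. Next I would compute $H^\bullet(V(D-K))$ from the twist $0\to\OO(2D-K)\to V(D-K)\to\OO\to 0$. The connecting map $H^0(\OO)\to H^1(2D-K)$ is cup product with the extension class of $V$, which is nonzero because $V$ is a nonsplit extension; combined with Proposition~\ref{prop-2DminusK} (which gives $h^0(2D-K)=h^2(2D-K)=0$ and $h^1(2D-K)=-\chi(2D-K)$), this produces $h^\bullet(V(D-K))=(0,-\chi(2D-K)-1,0)$ in all cases.

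Feeding these into the $\Hom(-,V)$ long exact sequence, the structure falls out quickly. Because $H^0(V(D-K))=\Hom(K(-D),V)=0$, the inclusion $\Hom(V,V)\hookrightarrow\Hom(\OO(D),V)=\CC$ together with the identity forces $\hom(V,V)=1$ and kills the connecting map leaving $\Hom(\OO(D),V)$. Since $H^2(V(D-K))=0$ as well, $\Ext^2(V,V)$ is read off directly as $H^2(V(-D))$, equal to $\chi(2D)$ for non-exceptional $D$ and to $1$ for $D=E_i$. Finally $\Ext^1(V,V)$ sits in a short exact sequence with ends $H^1(V(D-K))$ and $H^1(V(-D))$, giving $\ext^1(V,V)=-\chi(2D-K)-1$ when $D$ is not exceptional and $\ext^1(V,V)=(n-12)+1=n-11$ when $D=E_i$; parts (2) and (3) then follow by substituting $\chi(-K)=10-n$ and $\chi(2E_i-K)=11-n$.

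The main obstacle is the exceptional case $D=E_i$. There the term $h^1(2E_i)=1$ in Corollary~\ref{cor-doubleCohomology} forces $V(-E_i)$ to carry the extra cohomology $h^1=h^2=1$, so the clean formulas of part (1) break: the $\Ext^1$ sequence acquires an additional one-dimensional contribution from $H^1(V(-E_i))$, and $\Ext^2$ is no longer $\chi(2E_i)=0$ but instead $1$. The delicate bookkeeping is to check that these extra contributions assemble exactly into $\ext^1=n-11$ and $\ext^2=1$ rather than leaking elsewhere, which rests on the vanishing $H^2(V(D-K))=0$ that prevents any cancellation against the $\Ext^2(K(-D),V)$ term. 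The only other point requiring care is the identification of the degree-zero connecting map with cup product against the extension class, since it is this nonvanishing (equivalently $h^0(V(D-K))=0$) that pins down $\hom(V,V)=1$ and makes the remaining sequence split into manageable short exact pieces.
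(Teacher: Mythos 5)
Your proof is correct and follows essentially the same route as the paper: both reduce $\ext^i(V,V)$ to the cohomology of $\OO(2D-K)$ and $\OO(2D)$ (Proposition~\ref{prop-2DminusK} and Corollary~\ref{cor-doubleCohomology}) by applying Hom-functors twice to the defining extension, using nonsplitness to make the connecting map into $H^1(\OO(2D-K))$ injective and the one-dimensionality of $H^0(V(-D))$ to force $\hom(V,V)=1$. The only cosmetic difference is that you resolve the first argument via $\Hom(-,V)$ where the paper resolves the second via $\Hom(V,-)$; since $V^*\cong V(-K)$, the intermediate groups you compute are literally the same as the paper's.
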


\begin{proof}
  Let $V$ be a nonsplit bundle of type $D$, given by a nonsplit extension as
  $$0\lra \OO(D)\lra V\lra K(-D)\lra 0.$$
  We first apply $\Hom(-,\OO(D))$ to this sequence.  We display the dimensions $\ext^i(A,B)$ for the relevant pairs $(A,B)$ of objects in the following table:  
$$\begin{array}{c|cccccc}
&& (K(-D),\OO(D)) && (V,\OO(D)) && (\OO(D),\OO(D)) \\\hline
\hom &&0& \lra &0& \to &1\\
\ext^1 & \lra & -\chi(2D-K) & \lra &-\chi(2D-K)-1&\lra & 0\\
\ext^2 & \lra &0& \lra & 0& \lra & 0\\ 
\end{array}
$$
Here the first column of values $\ext^i(K(-D),\OO(D))$ are given by $h^i(\OO(2D-K))$, which were computed in Proposition~\ref{prop-2DminusK}.  The third column is clear, and the map $\Hom(\OO(D),\OO(D))\to \Ext^1(K(-D),\OO(D))$ is injective because $V$ is nonsplit.  The values of $\ext^i(V,\OO(D))$ follow.

Let $e$ be $0$ if $D$ is not exceptional, and let $e$ be $1$ if $D$ is exceptional.  Then by Corollary~\ref{cor-doubleCohomology}, the line bundle $\OO(2D)$ has $h^1(\OO(2D)) = e$ and $h^2(\OO(2D)) = 0$.   When we apply $\Hom(-,K(-D))$ to the sequence, we get the following cohomology since in the third column, we have $\ext^i(\OO(D),K(-D))) = h^i(K(-2D)) = h^{2-i}(\OO(2D))$: 
$$
\begin{array}{c|cccccc}
&& (K(-D),K(-D)) && (V,K(-D)) && (\OO(D),K(-D)) \\\hline
\hom &&1& \lra &1& \lra & 0\\
\ext^1 & \lra & 0 & \lra &e &\lra & e\\
\ext^2 & \lra &0& \lra & h^0(\OO(2D)) & \lra & h^0(\OO(2D))\\ 
\end{array}
$$
Finally, we apply $\Hom(V,-)$ to the sequence and get the following table: 
$$
\begin{array}{c|cccccc}
&& (V,\OO(D)) && (V,V) && (V,K(-D)) \\\hline
\hom &&0& \lra &1& \lra & 1\\
\ext^1 & \lra & -\chi(2D-K)-1 & \lra & -\chi(2D-K)-1+e &\lra & e\\
\ext^2 & \lra &0& \lra & h^0(\OO(2D)) & \lra & h^0(\OO(2D))\\ 
\end{array}
$$
Clearly $\hom(V,V) \geq 1$, but $\Hom(V,V)\to \Hom(V,K(-D))$ is an injection and hence an isomorphism.  The rest of the table is immediate.  This proves part~\eqref{le-1}, and specializing to the situations of~\eqref{le-2} and~\eqref{le-3}  completes the proof.
\end{proof}

\subsection{Components of nontrivial type}
In fact, each nontrivial type $D$ other than the exceptional divisors $E_i$ gives rise to a disjoint component of the moduli space $M_{A_t}(\bv)$ whenever $t < t_D$, and this component is a projective space of extensions.

\begin{theorem}\label{thm-componentD}
$($Assume SHGH\,$)$ Assume that $D$ is not trivial or one of the exceptional divisors $E_i$.  If $\sqrt{n} < t< t_D$, then the nonsplit bundles of type $D$ sweep out an irreducible component of\, $M_{A_t}(2,K,2)$ which is isomorphic to the projective space
  $$\P \Ext^1(K(-D),\OO(D)) \cong \P H^1(\OO(2D-K)) \cong \P^{-\chi(2D-K)-1}.$$
  This component is disjoint from all other components of the moduli space.
\end{theorem}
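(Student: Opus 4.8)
The plan is to realize the asserted component as the image of the projective space of extensions under the coarse moduli map, and then to pin it down by a tangent space computation. First I would identify $\Ext^1(K(-D),\OO(D))\cong H^1(\OO(2D-K))$ and, using Proposition~\ref{prop-2DminusK} (which gives $h^0(\OO(2D-K))=h^2(\OO(2D-K))=0$), compute $\dim \P\Ext^1(K(-D),\OO(D)) = -\chi(2D-K)-1$. Over $P:=\P\Ext^1(K(-D),\OO(D))$ there is a universal extension of $K(-D)$ by an appropriate twist of $\OO(D)$ on $X\times P$, flat over $P$, whose fiber over a class $[e]$ is the nonsplit bundle $V_e$ of type $D$ determined by $e$. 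Since $D$ is nontrivial, Proposition~\ref{prop-Dstable} shows each $V_e$ is $\mu_{A_t}$-stable for $\sqrt n<t<t_D$, hence $A_t$-stable, so the universal property of the moduli space produces a morphism $\phi\colon P\to M_{A_t}(\bv)$.

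Next I would prove $\phi$ is injective. Twisting the defining sequence of a type $D$ bundle $V$ by $-D$ gives $0\to\OO\to V(-D)\to K(-2D)\to 0$; as $K-2D$ has negative intersection with $H$ it is not effective, so $\Hom(\OO(D),V)=H^0(V(-D))\cong\CC$. Thus the inclusion $\OO(D)\hookrightarrow V$ is unique up to scalar, so any abstract isomorphism $V_e\cong V_{e'}$ respects these canonical subbundles and the induced quotients $K(-D)$, hence is an isomorphism of extensions and forces $[e]=[e']$ in $P$. Because $P$ is projective, the image $I:=\phi(P)$ is a closed irreducible subvariety of $M_{A_t}(\bv)$ of dimension $\dim P=-\chi(2D-K)-1$.

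Then I would run the tangent space comparison. By Lemma~\ref{lem-ext}\eqref{le-1} (using that $D$ is neither trivial nor exceptional), each $V=V_e$ has $\ext^1(V,V)=-\chi(2D-K)-1$, so the Zariski tangent space $T_{[V]}M_{A_t}(\bv)=\Ext^1(V,V)$ has dimension exactly $\dim I$. From the chain $\dim I\le\dim_{[V]}M_{A_t}(\bv)\le\dim T_{[V]}M_{A_t}(\bv)=\dim I$ I conclude that equality holds throughout: the moduli space is smooth at every point of $I$ and $I$ is the unique component of $M_{A_t}(\bv)$ through each of its points. Hence $I$ is an irreducible component, and since all of its points are smooth points of the moduli space, $I$ is disjoint from every other component. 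Finally, $\phi\colon P\to I$ is a bijective proper morphism onto the smooth, hence normal, variety $I$ of equal dimension, so Zariski's main theorem in characteristic $0$ upgrades it to an isomorphism $I\cong\P^{-\chi(2D-K)-1}$.

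The cohomological dimension counts are routine given the cited results; the genuinely delicate points are the construction and flatness of the universal extension and the verification that $\phi$ is an isomorphism onto its image rather than merely injective on points. I would handle the latter by identifying the differential $d\phi$ at $[e]$ with the natural connecting map $\Ext^1(K(-D),\OO(D))\to\Ext^1(V_e,V_e)$ coming from the defining sequence---the very map whose bijectivity is implicit in the proof of Lemma~\ref{lem-ext}---so that $d\phi$ is an isomorphism onto the full tangent space; together with injectivity and properness this already forces $\phi$ to be a closed immersion, after which the dimension count certifies that its image is an entire component.
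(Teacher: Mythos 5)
Your proposal is correct and follows essentially the same route as the paper: the universal extension gives a morphism $\phi$ from $\P\Ext^1(K(-D),\OO(D))$ to the moduli space, injectivity comes from the uniqueness (up to scalar) of the sub-/quotient line bundle (you use $\Hom(\OO(D),V)\cong\CC$, the paper the dual fact $\Hom(V,K(-D))\cong\CC$), and the identification of $d\phi$ with the connecting maps plus the dimension count from Lemma~\ref{lem-ext} yields smoothness, componenthood, and the isomorphism. The only cosmetic difference is that you package the final step as a dimension sandwich plus Zariski's main theorem, whereas the paper verifies directly that $d\phi_{[e]}$ is surjective between spaces of equal dimension.
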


\begin{proof}
  By Proposition~\ref{prop-Dstable}, every nonsplit extension
  $$0\lra \OO(D) \lra V\lra K(-D)\lra 0$$
  of $K(-D)$ by $\OO(D)$ gives rise to an $A_t$-stable bundle of type $D$.  Then the universal extension over $\P \Ext^1(K(-D),\OO(D))$ induces a natural morphism
  $$\phi\colon\P \Ext^1(K(-D),\OO(D))\lra M_{A_t}(2,K,2).$$
  To complete the proof, it suffices to show that $\phi$ is an injection and an isomorphism on tangent spaces.

In the proof of Lemma~\ref{lem-ext}, we showed that $\Hom(V,K(-D))=\CC$. It follows that if $V$ fits as an extension of $K(-D)$ by $\OO(D)$ in two different ways, then the corresponding classes in $\Ext^1(K(-D),\OO(D))$ are multiples of each other.  Therefore, $\phi$ is injective.

Let $V$ be given by the extension class $e \in \Ext^1(K(-D),\OO(D))$, and let $[e]$ denote its image in $\P\Ext^1(K(-D),\OO(D))$.  We let
$$\tilde \phi\colon\Ext^1(K(-D),\OO(D))\sm\{0\}\lra M_{A_t}(2,K,2)$$
be the composition of the quotient map $\Ext^1(K(-D),\OO(D))\sm \{0\}\to \P \Ext^1(K(-D),\OO(D))$ and $\phi$.  Then the derivative $d \tilde \phi_e$ factors as the composition of the natural maps
$$\Ext^1(K(-D),\OO(D))\flra\alpha \Ext^1(K(-D),V)\flra\beta \Ext^1(V,V)$$
coming from applying various $\Hom$ functors to the defining sequence of $V$.  The map $\alpha$ fits into the sequence
$$\Ext^1(K(-D),\OO(D))\flra\alpha \Ext^1(K(-D),V) \lra \Ext^1(K(-D),K(-D)) = 0,$$
so $\alpha$ is surjective.  The map $\beta$ fits into the sequence
$$\Ext^1(K(-D),V)\flra\beta \Ext^1(V,V)\lra \Ext^1(\OO(D),V),$$
and therefore $\beta$ is surjective since we compute $\Ext^1(\OO(D),V) = H^1(V(-D))= 0$ from the sequence
$$H^1(\OO)\lra H^1(V(-D))\lra H^1(K(-2D)),$$
where $H^1(K(-2D)) = H^1(\OO(2D)) = 0$ by Corollary~\ref{cor-doubleCohomology}.

We conclude that $d\tilde \phi_e$ is surjective, with $\CC e$ contained in its kernel.  The tangent space to $\P \Ext^1(K(-D),\OO(D))$ at $[e]$ is naturally identified with $\Ext^1(K(-D),\OO(D))/\CC e$, and $d\tilde \phi_e$ factors through $d \phi_{[e]}$ to show that
$$d\phi_{[e]} \colon \Ext^{1}(K(-D),\OO(D))/\CC e \lra \Ext^1(V,V)$$
is surjective.  These spaces have the same dimension by Lemma~\ref{lem-ext}, so $d\phi_{[e]}$ is an isomorphism.  This completes the proof.\end{proof}

\subsection{The component of trivial type}
When $n=11$ or $12$, bundles of trivial type again sweep out a component.

\begin{theorem}\label{thm66}
$($Assume SHGH\,$)$ Suppose $n=11$ or $12$. For any $t$ with $\sqrt{n}<t<t_\OO$, the nonsplit bundles of type $\OO$ sweep out a component of $M_{A_t}(2,K,2)$ isomorphic to $\P\Ext^1(K,\OO) \cong \P^{n-11}$.
\end{theorem}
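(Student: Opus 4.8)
The plan is to adapt the proof of Theorem~\ref{thm-componentD} to the trivial type $D=\OO$, the only substantive changes being that stability now comes from Proposition~\ref{prop-Ostable}\eqref{pO-1} and that the relevant $\Ext$ dimensions come from Lemma~\ref{lem-ext}\eqref{le-2}. First I would invoke Proposition~\ref{prop-Ostable}\eqref{pO-1}: because $n=11$ or $12$, every nonsplit extension
\[0\lra \OO\lra V\lra K\lra 0\]
gives an $A_t$-stable bundle of type $\OO$ on the entire interval $\sqrt{n}<t<t_\OO$. This is precisely where the hypothesis $n=11,12$ is used: for these values no exceptional divisor $E_i$ satisfies $2B\cdot E_i<B\cdot K$, so no type-$\OO$ bundle is ever destabilized, in contrast to the case $n\geq 13$ of Proposition~\ref{prop-Ostable}\eqref{pO-2} where $n$ special points appear. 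The universal extension over $\P\Ext^1(K,\OO)$ is then a flat family of $A_t$-stable bundles and induces a morphism
\[\phi\colon \P\Ext^1(K,\OO)\lra M_{A_t}(2,K,2).\]
As in Theorem~\ref{thm-componentD}, it suffices to check that $\phi$ is injective and an isomorphism on tangent spaces.

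For injectivity I would use that $\Hom(V,K)=\CC$, which is the $(V,K(-D))$ entry of the middle table in the proof of Lemma~\ref{lem-ext} specialized to $D=\OO$; exactly as before, two extension classes presenting the same $V$ are forced to be proportional, so $\phi$ is injective. For the tangent map, the derivative of the cone map $\tilde\phi$ factors as
\[\Ext^1(K,\OO)\flra{\alpha}\Ext^1(K,V)\flra{\beta}\Ext^1(V,V),\]
where $\alpha$ is surjective because $\Ext^1(K,K)=H^1(\OO)=0$ and $\beta$ is surjective provided $\Ext^1(\OO,V)=H^1(V)=0$. The one computation genuinely different from the nontrivial-type case is this vanishing $H^1(V)=0$: the defining sequence gives $H^1(\OO)\to H^1(V)\to H^1(K)$, and both outer groups vanish since $X$ is rational, so that $h^1(\OO)=0$ and, by Serre duality, $h^1(K)=h^1(\OO)=0$. (This replaces the appeal to $H^1(\OO(2D))=0$ from Corollary~\ref{cor-doubleCohomology} used in Theorem~\ref{thm-componentD}.)

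Finally I would match dimensions. Lemma~\ref{lem-ext}\eqref{le-2} gives $\ext^1(V,V)=n-11$, while $\dim\P\Ext^1(K,\OO)=h^1(\OO(-K))-1=n-11$, since $\chi(\OO(-K))=10-n$ and both $h^0(\OO(-K))$ and $h^2(\OO(-K))$ vanish for $n\geq 10$ (a cubic cannot pass through $n\geq 10$ very general points, and $2K$ is noneffective). Hence the surjection
\[d\phi_{[e]}\colon \Ext^1(K,\OO)/\CC e\lra \Ext^1(V,V)\]
is a map between equidimensional spaces, so it is an isomorphism. Since $\phi$ is injective, proper, and an isomorphism onto the full tangent space $\Ext^1(V,V)=T_{[V]}M_{A_t}(2,K,2)$ at every point, its image $Z$ is closed, the moduli space is smooth of dimension $n-11$ along $Z$, and $T_zZ=T_zM_{A_t}(2,K,2)$ forces $Z$ to be open as well. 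Being closed, open, and irreducible (the image of $\P^{n-11}$), $Z$ is a connected component isomorphic to $\P^{n-11}$.

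I do not expect a serious obstacle here, since the argument is parallel to Theorem~\ref{thm-componentD}; the delicate points are purely organizational. One must be careful to invoke the stability statement valid on the \emph{whole} interval (Proposition~\ref{prop-Ostable}\eqref{pO-1}, which is exactly why $n=11,12$ is singled out and $n\geq 13$ is excluded), to use the correct $\Ext$ dimensions from Lemma~\ref{lem-ext}\eqref{le-2}, and to verify the single new cohomological input $H^1(V)=0$. No geometric ingredient beyond these is needed.
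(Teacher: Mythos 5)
Your proposal is correct and is essentially the paper's own proof: the paper simply states that the argument of Theorem~\ref{thm-componentD} carries over verbatim, with Proposition~\ref{prop-Ostable}\eqref{pO-1} supplying stability of nonsplit extensions. The one detail you flag as "new" — the vanishing $H^1(V)=0$ — is in fact already covered by Corollary~\ref{cor-doubleCohomology}\eqref{c-dC-1} applied to $D=\OO$ (which is not exceptional), so no genuinely new input is needed.
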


\begin{proof}
The proof is the same as that of Theorem~\ref{thm-componentD}, using Proposition~\ref{prop-Ostable}\eqref{pO-1} to establish stability of nonsplit extensions.
\end{proof}

On the other hand, bundles of type $E_i$ complicate the picture for $13\leq n\leq 16$.  Nevertheless, we completely identify the moduli space in this case.

\begin{theorem}\label{thm-componentO}
$($Assume SHGH\,$)$ Suppose $13\leq n \leq 16$.
\begin{enumerate}
\item\label{t-cO-1} For any $t$ with $t_{E_1}< t < t_\OO$, the nonsplit bundles of type $\OO$ sweep out a component of $M_{A_t}(2,K,2)$ isomorphic to $\P\Ext^1(K,\OO) \cong \P^{n-11}$.
\item\label{t-cO-2} For any $t$ with $\sqrt{n} < t < t_{E_1}$, the component of $M_{A_t}(2,K,2)$ containing stable bundles of type $\OO$ consists of all the stable bundles of type $\OO$ together with the nonsplit bundles of each type $E_i$.  This component is isomorphic to the blowup of the projective space $\P\Ext^1(K,\OO)$ at the $n$ points determined by the canonical inclusions of\, $1$-dimensional spaces
  $$\Hom(K,\OO_{E_i}(-1))\lra \Ext^1(K,\OO),$$
  and it is disjoint from all other components.
\end{enumerate}
\end{theorem}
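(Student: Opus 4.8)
The plan is to handle the two parts separately, reducing part~\eqref{t-cO-1} to arguments already in place and concentrating the real work on part~\eqref{t-cO-2}.

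For part~\eqref{t-cO-1}, in the range $t_{E_1}<t<t_\OO$ Proposition~\ref{prop-Ostable}\eqref{pO-2} guarantees that \emph{every} nonsplit bundle of type $\OO$ is $\mu_{A_t}$-stable, including the $n$ special extension classes. Thus the argument is verbatim that of Theorem~\ref{thm66} (equivalently, the $D=\OO$ instance of Theorem~\ref{thm-componentD}): the universal extension over $\P\Ext^1(K,\OO)$ induces a morphism $\phi\colon\P\Ext^1(K,\OO)\to M_{A_t}(2,K,2)$ which is injective because $\Hom(V,K)=\CC$, and an isomorphism on tangent spaces because $\ext^1(V,V)=n-11=\dim\P\Ext^1(K,\OO)$ by Lemma~\ref{lem-ext}\eqref{le-2}. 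Since the target is smooth of this dimension along the image, $\phi$ realizes $\P^{n-11}$ as a component.

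For part~\eqref{t-cO-2}, I would first record the set-theoretic picture in the range $\sqrt n<t<t_{E_1}$. By Proposition~\ref{prop-Ostable}\eqref{pO-2}, the stable bundles of type $\OO$ are exactly those parameterized by $\P\Ext^1(K,\OO)\sm\{p_1,\dots,p_n\}$, where $p_i$ is the image of $\Hom(K,\OO_{E_i}(-1))\hookrightarrow\Ext^1(K,\OO)$; the $n$ excised classes are destabilized by their maps to $\OO(E_i)$. Meanwhile $t_{E_i}=t_{E_1}$ for all $i$ by symmetry, so Proposition~\ref{prop-Dstable} shows every nonsplit bundle of each type $E_i$ is stable, and by the computation $h^1(\OO(2E_i-K))=n-11$ these sweep out $\P\Ext^1(K(-E_i),\OO(E_i))\cong\P^{n-12}$. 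By uniqueness of type (Theorem~\ref{thm-type}) the type $\OO$ locus and the $n$ type $E_i$ loci are pairwise disjoint and together constitute the candidate set $U$ of all stable bundles whose type is $\OO$ or some $E_i$. The heart of the matter is to identify $U$ with the blowup $\tilde Y:=\Bl_{p_1,\dots,p_n}\P\Ext^1(K,\OO)$ via a morphism $\Phi\colon\tilde Y\to M_{A_t}(2,K,2)$. I would construct $\Phi$ from a family on $X\times\tilde Y$: pull back the universal extension along $\pi\colon X\times\tilde Y\to X\times\P\Ext^1(K,\OO)$, whose restriction over the exceptional divisor $\mathcal E_i\cong\P^{n-12}$ lying over $p_i$ is the constant \emph{unstable} bundle $V_{p_i}$, and then take the elementary modification of $\pi^*\cV$ along $X\times\mathcal E_i$ determined by the canonical quotient $\pi^*\cV|_{X\times\mathcal E_i}\onto\OO(E_i)$ (the sheaf that is constantly $\OO(E_i)$ on the $X$-factor). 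Because $V_{p_i}$ sits in $0\to K(-E_i)\to V_{p_i}\to\OO(E_i)\to 0$ and the normal bundle of $\mathcal E_i$ is $\OO_{\mathcal E_i}(-1)$, the fibers of the modified family over $\mathcal E_i$ become the \emph{opposite} extensions $0\to\OO(E_i)\to W\to K(-E_i)\to 0$, that is, the nonsplit bundles of type $E_i$.

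Granting the family, I would finish by showing $\Phi$ is a bijection onto $U$ and an isomorphism on tangent spaces. Bijectivity away from the $\mathcal E_i$ is part~\eqref{t-cO-1}; along each $\mathcal E_i$ it follows because $\Hom(W,\OO(E_i))=\CC$ pins the type $E_i$ extension down to a scalar, and the two kinds of loci are separated by Theorem~\ref{thm-type}. The tangent-space statement over $\tilde Y\sm\bigcup\mathcal E_i$ is again part~\eqref{t-cO-1}; along $\mathcal E_i$ it reduces, via $\ext^1(W,W)=n-11=\dim\tilde Y$ from Lemma~\ref{lem-ext}\eqref{le-3}, to showing $d\Phi$ is injective there. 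As in Theorem~\ref{thm-componentD}, a morphism from the smooth projective variety $\tilde Y$ that is injective and an isomorphism on tangent spaces is an isomorphism onto its image, and that image, being closed and irreducible inside the smooth moduli space, is a connected component, necessarily equal to $U$. Disjointness from the remaining components is then immediate: every other component is swept out by bundles of some nontrivial, non-exceptional type $D$ (Theorem~\ref{thm-componentD}), and uniqueness of type separates those from the type $\OO$ and type $E_i$ bundles comprising $U$.

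The main obstacle I anticipate is the construction of the global family across the exceptional divisors together with the verification that $d\Phi$ is an isomorphism along each $\mathcal E_i$. Concretely, this requires a natural identification of the projectivized normal space $\mathcal E_i=\P\bigl(\Ext^1(K,\OO)/\CC e_i\bigr)$ with $\P\Ext^1(K(-E_i),\OO(E_i))$, which I expect to read off from the commuting diagram in the proof of Proposition~\ref{prop-Ostable}\eqref{pO-2}; everything else is bookkeeping with the dimension counts already established.
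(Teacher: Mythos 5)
Your part~\eqref{t-cO-1} matches the paper exactly. For part~\eqref{t-cO-2} you take a genuinely different route: you propose to build a morphism \emph{into} the moduli space from the blowup $\tilde Y=\Bl_{q_1,\dots,q_n}\P\Ext^1(K,\OO)$, by performing an elementary modification of the pulled-back universal extension along each $X\times\mathcal E_i$, and then to check that $\Phi$ is bijective onto the type-$\OO$/type-$E_i$ locus and an isomorphism on tangent spaces. The paper goes the opposite direction: it takes $M\subset M_{A_t}(2,K,2)$ to be the locus of bundles of type $\OO$ or $E_i$, uses the \emph{unique section} $0\to\OO\to V\to F\to 0$ of any such $V$ (with $F\cong K$ or $F\cong K(-E_i)\oplus\OO_{E_i}(-1)$) to get a canonical line $\Hom(K,F)\hookrightarrow\Ext^1(K,\OO)$ and hence a blowdown morphism $\pi\colon M\to\P\Ext^1(K,\OO)$ contracting each $Y_i$ to $q_i$; it then shows $M$ is connected (a curve-lifting argument through $q_i$, using continuity of $\pi$ to pin down which type $E_j$ can appear in the limit) and hence smooth of dimension $n-11$ by Lemma~\ref{lem-ext}, and concludes via the universal property of the blowup plus Zariski's main theorem. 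What the paper's route buys is that it never has to construct a family over $\tilde Y$ at all; what your route buys is that connectedness and irreducibility come for free from $\tilde Y$.

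The cost of your route is precisely the step you flag as ``the main obstacle,'' and I would not treat it as bookkeeping: you must (i) verify flatness of the modified family, (ii) show that for $y\in\mathcal E_i$ the fiber extension $0\to\OO(E_i)\to\cV'|_{X\times\{y\}}\to K(-E_i)\to 0$ (which acquires the sub $\OO(E_i)$ from the $\sTor_1$ term) is \emph{nonsplit} --- this is essential, since the split bundle $\OO(E_i)\oplus K(-E_i)$ is destabilized by $K(-E_i)$ for $t<t_{E_i}$, so a split fiber would mean $\Phi$ does not even land in the moduli space --- and (iii) identify $\mathcal E_i=\P\bigl(\Ext^1(K,\OO)/\CC e_i\bigr)$ with $\P\Ext^1(K(-E_i),\OO(E_i))$, which needs both the surjection $\Ext^1(K,\OO)\onto\Ext^1(K,\OO(E_i))$ from the paper's diagram and the isomorphism $\Ext^1(K,\OO(E_i))\cong\Ext^1(K(-E_i),\OO(E_i))$ coming from $H^*(\OO_{E_i}(2E_i-K))=0$. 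These are all doable, but they constitute the actual content of the proof in your approach, and as written the proposal asserts rather than proves them. If you want to avoid that work, the paper's canonical-section construction of the contraction $\pi$ is the shortcut.
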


\begin{proof}
\eqref{t-cO-1}~ The proof is the same as that of Theorem~\ref{thm66}.

\eqref{t-cO-2}~ We know from Theorem~\ref{thm-componentD} that components of the moduli space parameterizing bundles of types $D$ other than $\OO$ or $E_i$ are disjoint from any components which parameterize a bundle of type $\OO$ or $E_i$.  Thus we can let $M\subset M_{A_t}(2,K,2)$ be the subscheme parameterizing any bundles of type $\OO$ or $E_i$.  By Lemma~\ref{lem-ext}, the tangent space of $M$ has dimension $n-11$ at every point $V\in M$.  

Let us write $q_1,\ldots,q_n$ for the $n$ points in $\P\Ext^1(K,\OO)$ corresponding to the inclusions $\Hom(K,\OO_{E_i}(-1))\mkern-1mu\to\mkern-1mu \Ext^1(K,\OO)$.  Then $M$ is projective and contains the subvariety
$$U = \P\Ext^1(K,\OO)\sm\{q_1,\ldots,q_n\} \subset M,$$
which parameterizes the $\mu_{A_t}$-stable bundles of type $\OO$.  Let $Y_i\subset M$ be the locus of stable bundles of type $E_i$, so that each $Y_i$ is the bijective image of the projective space $\P\Ext^1(K(-E_i),\OO(E_i))$.  Then $M$ is the disjoint union of $U$ and the $Y_i$.  
 
The most important step of the proof is to construct the blowdown map $\pi\colon M \to \P\Ext^1(K,\OO)$,
which we now describe.  Every bundle $V\in M$ is either of type $\OO$ or of type $E_i$, so fits in one of the sequences
$$0\lra \OO \lra V\lra K\lra 0$$
and 
$$0\lra \OO(E_i)\lra V\lra K(-E_i)\lra 0.$$
In each case, $V$ has a unique section, so we can canonically consider an exact sequence of the form
$$0\lra \OO \lra V \lra F\lra 0.$$  

If $V$ has type $\OO$, then the sheaf $F$ is isomorphic to $K$.  On the other hand, if $V$ has type $E_i$, then $F$ has a torsion subsheaf $\OO_{E_i}(-1)$ and $F$ is isomorphic to $K(-E_i)\oplus \OO_{E_i}(-1)$ since there are no nontrivial extensions of $K(-E_i)$ by $\OO_{E_i}(-1)$.  In either case, $\Hom(K,F)$ is a $1$-dimensional space, and applying $\Hom(K,-)$ to the sequence gives us an inclusion
$$0\lra \Hom(K,F)\lra \Ext^1(K,\OO)$$
since $\Hom(K,V) = 0$.  This inclusion therefore determines a point in $\P\Ext^1(K,\OO)$, which we denote by $\pi(V)$.  Carrying out this construction in families defines a morphism $\pi \colon M\to \P\Ext^1(K,\OO)$.  

 If $V$ has type $\OO$, it is clear that $\pi(V)$ is precisely (the linear space spanned by) the extension class defining~$V$.  Thus $\pi$ acts on $U$ by the natural inclusion $U \to \P\Ext^1(K,\OO)$.

Suppose that $V$ has type $E_i$.  Then we claim that $\pi(V) = q_i$.  In fact, we have an isomorphism $\Hom(K,F) \cong \Hom(K,\OO_{E_i}(-1))$, so we just need to see that up to this isomorphism, the inclusion $\Hom(K,F)\to \Ext^1(K,\OO)$ is the same map as the canonical inclusion $\Hom(K,\OO_{E_1}(-1))\to \Ext^1(K,\OO)$.  We have the following diagram of short exact sequences: 
$$
\xymatrix{
0 \ar[r] &\OO \ar[r] & V \ar[r] & F \ar[r] & 0 \\
0 \ar[r] & \OO \ar@{=}[u]\ar[r] & \OO(E_i)\ar[u]\ar[r] & \OO_{E_i}(-1)\ar[u]\ar[r] & 0\rlap{.} 
}
$$
Applying $\Hom(K,-)$, we get the commutative square
$$
\xymatrix{
\Hom(K,F) \ar[r] & \Ext^1(K,\OO)\\
\Hom(K,\OO_{E_i}(-1)) \ar[r]\ar[u]^\cong & \Ext^1(K,\OO)\rlap{,}\ar@{=}[u]
}
$$
which shows that the image of $\Hom(K,F)$ in $\Ext^1(K,\OO)$ is the subspace corresponding to $q_i$.

Next we will show that $M$ is smooth and irreducible.  Since $M$ has a component of dimension $n-11$ and every tangent space of $M$ has dimension $n-11$, it suffices to show that $M$ is connected.  Consider any mapping of a smooth curve $C \to \P\Ext^1(K,\OO)$ having a point $p\in C$ mapping to $q_i$.  Then the map $C\sm \{p\} \to M$ extends to a regular map $C\to M$, and the point $p$ maps to a point representing a bundle of some type $E_j$.  By the continuity of the map $\pi$, the only possibility is that $j=i$.  The locus $Y_i$ is itself connected, so we conclude that $M$ is connected.  Therefore, $M$ is smooth.

By the universal property of the blowup, the map $\pi\colon M\to \P\Ext^1(K,\OO)$ factors through the blowup of $\P\Ext^1(K,\OO)$ at $\{q_1,\ldots,q_n\}$ as
$$M \lra \Bl_{q_1,\ldots,q_n} \P\Ext^1(K,\OO) \lra \P\Ext^1(K,\OO).$$
The first map is a bijection between smooth varieties, so it is an isomorphism by Zariski's main theorem.  
\end{proof}

\subsection{Summary of results}
We briefly summarize our description of the moduli spaces $M_{A_t}(2,K,2)$ for $10\leq n \leq 15$, assuming that the SHGH conjecture holds.  In the next section, we will unconditionally describe the moduli space when $n=16$.

\begin{enumerate}
\item If $t > t_\OO = \frac{n}{3}$, then the moduli space $M_{A_t}(2,K,2)$ is empty.

\item If $11\leq n \leq 15$, a new component parameterizing bundles of type $\OO$ arises when $t$ decreases past $t_\OO$.  This component is isomorphic to $\PP^{n-11}$, and the component persists as $t$ decreases to $\sqrt{n}$. If $13\leq n \leq 15$, then as $t$ decreases past $t_{E_1} = \frac{n-2}{3}$, this component is blown up at $n$ points, with the exceptional divisors parameterizing bundles of type $E_i$.

\item For each nontrivial, nonexceptional effective divisor $D$ satisfying $\chi(D) \geq 1$ and $2B\cdot D < B\cdot K$, a new component of $M_{A_t}(2,K,2)$ parameterizing bundles of type $D$ arises when $t$ decreases past $t_D$.  This component is isomorphic to $\P^{-\chi(2D-K)-1}$, and it persists and is unmodified as $t$ decreases to $\sqrt{n}$.  All components of the moduli space are disjoint from each other.
\end{enumerate}

\begin{example}\label{ex-components10}
For $10\leq n\leq 12$, our classification of the possible divisors $D$ in terms of the continued fraction expansion of $\sqrt{n}$ allows us to easily list all the wall-crossings that arise in the following tables: 
\renewcommand{\arraystretch}{1.3}
$$
\begin{array}{ccc}
n=10 && n=11\\
\begin{array}{ccc}
D & t_D &\textrm{New component}\\\hline
57H-18E &  \frac{370}{117} & \P^8\\
2220H-702E & \frac{14050}{4443} & \P^{359}\\
84357H-26676E & \frac{533530}{168717} & \P^{13688}\\ \vdots & \vdots & \vdots \\ &
\end{array} &&
\begin{array}{ccc}
D & t_D &\textrm{New component}\\\hline
\OO & \frac{11}{3} & \P^0\\
30H-9E &  \frac{209}{63} & \P^9\\
627H-189E & \frac{4169}{1257} & \P^{198}\\
12537H-3780E & \frac{83171}{25077} & \P^{3969}\\
\vdots & \vdots & \vdots
\end{array}\\

n=12\\
\begin{array}{ccc}
D & t_D &\textrm{New component}\\\hline
\OO & 4 & \P^1\\
21H-6E &  \frac{52}{15} & \P^{10}\\
312H-90E & \frac{724}{209} & \P^{145}\\
4365H-1260E & \frac{10084}{2911} & \P^{2026}\\
\vdots & \vdots & \vdots
\end{array}\\
\end{array}
$$
\end{example}

\begin{example}\label{ex-components13}
Since we also have the classification of divisors $D$ for $n=13$, we can similarly list all the wall-crossings in this case.  The main additional complication is that $D$ does not have to have equal multiplicities.  For such a $D$, when $t$ decreases past $t_D$, many components will simultaneously arise by permuting the multiplicities of $D$.  We list, in order of decreasing $t_D$,  all of the wall-crossings where $t_D - \sqrt{13} > 10^{-13}$.  The ``type'' indicates the infinite family that the divisor comes from in Theorem~\ref{thm-D13}.
\renewcommand{\arraystretch}{1.3}
$$
\begin{array}{c}
n=13\\
\begin{array}{cccc}
D & \textrm{Type} & t_D &\textrm{New components}\\\hline
\OO & \textrm{I} & \frac{13}{3} & \P^2 \\
E_1 & \textrm{IV} & \frac{11}{3} & \textrm{none; previous $\P^2$ blown up 13 times}\\
15H - 5E_1 - 4E_{2,\ldots,13}& \textrm{V} & \frac{119}{33} & \textrm{13 copies of $\P^{10}$}\\
195H - 54E & \textrm{II} & \frac{1417}{393} &  \P^{119}\\
1962H-545E_1 - 544E_{2,\ldots,13} & \textrm{VI} & \frac{14159}{3927} & \textrm{13 copies of $\P^{1189}$}\\
2142H - 594E & \textrm{I} & \frac{15457}{4287} & \P^{1298}\\
21417H - 5950E_{1,\ldots,12} - 5949E_{13} & \textrm{III} & \frac{1181}{327} & \textrm{13 copies of $\P^{12970}$}\\
2782260H - 771660E & \textrm{I} & \frac{20063173}{5564523} & \P^{1684802}\\
255057H - 70740E & \textrm{II} & \frac{1839253}{510117} & \P^{154451}\\
2548620H-706860E_{1,\ldots,12} - 706859E_{13} & \textrm{IV} & \frac{18378371}{5097243}
& \textrm{13 copies of $\P^{1543321}$}\\ \vdots & \vdots & \vdots & \vdots
\end{array}\\
\end{array}
$$
\end{example}

\subsection{Smaller Euler characteristic}
We have focused entirely on the moduli spaces $M_{A_t}(2,K,2)$ so far in this section, but our analysis makes it easy to prove a qualitative statement about the components of $M_{A_t}(2,K,\chi)$ for any $\chi \leq 2$.  For concreteness, we will restrict our attention to $10\leq n\leq 12$, although with a detailed analysis of the divisors $D$, it would be easy to extend the statement to $10\leq n\leq 15$. 

\begin{theorem}
$($Assume SHGH\,$)$ Suppose $10\leq n\leq 12$ and $\chi$ is an integer with $\chi \leq 2$.  Fix positive integers $k$ and $r$.  There exists an $\epsilon > 0$ such that if $\sqrt{n}< t < \sqrt{n}+\epsilon$, then the moduli space $M_{A_t} (2,K,\chi)$ has at least $k$ irreducible components of dimension at least $r$.
\end{theorem}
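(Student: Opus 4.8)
The plan is to reduce to the already-understood spaces $M_{A_t}(2,K,2)$ by means of elementary modifications, and to separate the resulting families using the reflexive hull. First I would record, from the growth estimate in Remark~\ref{rem-growth}, that the dimensions $M_j := -\chi(\OO(2D_j-K))-1$ of the type $D_j$ components of $M_{A_t}(2,K,2)$ tend to infinity with $j$, while the walls $t_{D_j}=nq_j/p_j$ decrease to $\sqrt n$. Thus, given $k$ and $r$, I can fix odd indices $j_1<\cdots<j_k$ with $M_{j_i}\ge r$ and set $\epsilon = t_{D_{j_k}}-\sqrt n>0$; then for $\sqrt n<t<\sqrt n+\epsilon$ all $k$ of the components $C_{D_{j_i}}\cong\P^{M_{j_i}}$ of $M_{A_t}(2,K,2)$ are present (Theorem~\ref{thm-componentD}).

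Next I would transport each $C_{D_{j_i}}$ into $M_{A_t}(2,K,\chi)$. For a $\mu_{A_t}$-stable bundle $W$ of type $D_{j_i}$ and a length $2-\chi$ quotient $W\onto T$, the kernel $V=\ker(W\to T)$ is a torsion-free sheaf of character $(2,K,\chi)$, and it is $\mu_{A_t}$-stable: every rank one subsheaf $L\subseteq V\subseteq W$ satisfies $\mu_{A_t}(L)<\mu_{A_t}(W)=\mu_{A_t}(V)$ by stability of $W$. Letting $W$ vary over $C_{D_{j_i}}$ and $T$ over the relative $\Quot$ scheme of length $2-\chi$ quotients produces an irreducible family $\mathcal{F}_{j_i}$ of stable sheaves. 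Since $W$ is locally free and $T$ is zero-dimensional one has $V^{**}\cong W$, so the classifying map is injective and $\dim\mathcal{F}_{j_i}=M_{j_i}+3(2-\chi)\ge M_{j_i}\ge r$.

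To separate the $\mathcal{F}_{j_i}$ I would use the reflexive hull as a discrete invariant. On $\overline{\mathcal{F}_{j_i}}$ the function $\chi(V^{**})$ equals $2$ on a dense open set and is bounded above by $2$ and upper semicontinuous, hence is identically $2$; therefore $V\mapsto V^{**}$ is a morphism $\overline{\mathcal{F}_{j_i}}\to M_{A_t}(2,K,2)$ whose image lies in the single component $C_{D_{j_i}}$. As the $C_{D_{j_i}}$ are pairwise disjoint (Theorem~\ref{thm-componentD}), the families $\mathcal{F}_{j_i}$ are themselves pairwise disjoint; granting that each $\mathcal{F}_{j_i}$ is open in the component of $M_{A_t}(2,K,\chi)$ containing it, this produces $k$ distinct components of dimension $\ge r$ and finishes the proof.

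The delicate point, and the step I expect to be the main obstacle, is precisely this last ``openness in its component.'' A priori $\mathcal{F}_{j_i}$ could be a proper subvariety of a larger component whose general member is more reflexive — for instance a genuine vector bundle of character $(2,K,\chi)$ obtained by smoothing the $2-\chi$ singular points — and on such a component $V\mapsto V^{**}$ is no longer globally defined, so the invariant above fails to separate. To close this gap I would compute the tangent–obstruction data $\Ext^\bullet(V,V)$ for $V\in\mathcal{F}_{j_i}$, generalizing Lemma~\ref{lem-ext}: the goal is to show that the extra smoothing directions in $\Ext^1(V,V)$ are obstructed (the obstruction lying in the nonzero space $\Ext^2(V,V)$, which for $\chi=2$ equals $H^0(\OO(2D_{j_i}))$), so that the component containing $\mathcal{F}_{j_i}$ still has general member with $\chi(V^{**})=2$ and reflexive hull in $C_{D_{j_i}}$. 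I would emphasize that this maximal–colength construction is what makes the argument uniform in $\chi$: the hull $V^{**}$ always has $\chi(V^{**})=2\ge 1$, so it falls under the type classification of Theorem~\ref{thm-10points} even in the range $\chi\le 0$ where the sheaves $V$ themselves carry no type.
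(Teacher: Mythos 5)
Your first two steps coincide with the paper's: you select $k$ components of $M_{A_t}(2,K,2)$ of large dimension using the growth of $-\chi(\OO(2D_j-K))$ from Remark~\ref{rem-growth}, and you transport them into $M_{A_t}(2,K,\chi)$ by $2-\chi$ elementary modifications, with the same dimension count. The divergence --- and the genuine gap --- is in how you distinguish the resulting $k$ components. Your reflexive-hull argument only shows that the \emph{families} $\mathcal{F}_{j_1},\ldots,\mathcal{F}_{j_k}$ are pairwise disjoint, and disjoint irreducible subvarieties can perfectly well sit inside one and the same irreducible component of $M_{A_t}(2,K,\chi)$; so you still need each $\mathcal{F}_{j_i}$ to be dense in the component containing it. You flag this yourself, but the fix you sketch (arguing that the smoothing directions in $\Ext^1(V,V)$ are obstructed because $\Ext^2(V,V)\neq 0$) is not carried out and is not obviously available: nonvanishing of the obstruction space does not by itself obstruct anything, and the very reason the component containing the modified sheaves is only pinned down to within the range $[\dim M_i+3(2-\chi),\,\dim M_i+4(2-\chi)]$ (the estimate from \cite[Section 3.3]{CoskunHuizengaPathologies} that you also invoke) is that this deformation theory is not under control.

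The paper closes this gap with an elementary device you should adopt: since the dimensions of the type $D_j$ components tend to infinity, one may choose the initial components $M_1,\ldots,M_k$ so that any two of them differ in dimension by more than $2-\chi$. Then the intervals $[\dim M_i+3(2-\chi),\,\dim M_i+4(2-\chi)]$ are pairwise disjoint, so the components of $M_{A_t}(2,K,\chi)$ containing the modified families have pairwise distinct dimensions and are therefore automatically distinct --- no openness statement and no reflexive-hull invariant is needed. With that substitution your argument becomes the paper's proof.
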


\begin{proof}
As $t$ approaches $\sqrt{n}$, the moduli space $M_{A_t}(2,K,2)$ gets components corresponding to the divisors $D_3,D_5,D_7,\ldots$.  We know that the dimensions of these components grow like the denominators in the continued fraction expansion of $\sqrt{n}$ by Remark~\ref{rem-growth}.  Then if $t$ is sufficiently close to $\sqrt{n}$, we can arrange that $M_{A_t}(2,K,2)$ has at least $k$ irreducible components $M_1,\ldots,M_k$ of dimension at least $r$.  Thus the theorem is true for $\chi=2$.  If $\chi<2$, we can additionally arrange that the difference in the dimensions between any two of these components is as large as we want; for concreteness, let us say that any two of these components differ in dimension by more than $2-\chi$.  These components are projective spaces, and in particular they are smooth. 

Recall that if $V$ is a torsion-free sheaf and $p\in X$ is a point where $V$ is locally free, then an \emph{elementary modification} of $V$ at $p$ is a sheaf $V'$ fitting in a sequence
$$0\lra V' \lra V \lra \OO_p\lra 0.$$
If $V$ is $\mu_{A_t}$-stable, then so is $V'$.  Then the locus in $M_{A_t}(2,K,1)$ parameterizing the elementary modifications of sheaves in $M_1$ is irreducible, so lies in an irreducible component $M_1'$ of $M_{A_t}(2,K,1)$.  By the analysis in \cite[Section 3.3]{CoskunHuizengaPathologies}, the dimension of the component $M_1'$ satisfies
$$\dim M_1 + 3 \leq \dim M_1' \leq \dim M_1 + 4.$$
Similarly, if we instead perform $2-\chi$ general elementary modifications to the bundles in $M_1$, then the resulting bundles will lie in an irreducible component $M_1^{(2-\chi)}$ of $M_{A_t}(2,K,\chi)$ whose dimension satisfies
$$\dim M_1 + 3(2-\chi) \leq \dim M_1^{(2-\chi)} \leq \dim M_1 + 4(2-\chi).$$
If we carry out this process for each of the components $M_1,\ldots, M_k$, we obtain a list of components $M_1^{(2-\chi)},\ldots,M_k^{(2-\chi)}$ of $M_{A_t}(2,K,\chi)$.  Our assumption on the dimensions of $M_1,\ldots,M_k$ implies that these components each have distinct dimensions, and they all have dimension at least $r$.
\end{proof}

\begin{remark}
In contrast, if the polarization $A_t$ is fixed but $\chi$ becomes arbitrarily negative, then the moduli spaces $M_{A_t}(2,K,\chi)$ become irreducible by O'Grady's theorem; see \cite{OGrady}.  Thus it is necessary to choose the polarization $A_t$ after fixing the Euler characteristic $\chi$ in the previous theorem.
\end{remark}

\section{Moduli spaces for sixteen or twenty-five points }\label{sec-square}

When $n=16$, the results of the previous section can all be proven independently of the SHGH conjecture.  In this section, we indicate the modifications that need to be made to the arguments to remove this dependency.  We then also discuss the moduli space $M_{A_t}(2,K,4)$ when $n=25$; by similar arguments, these spaces can also be completely described, independently of the SHGH conjecture.  We begin with the following theorem, which summarizes our results in case $n=16$.

\begin{theorem}
Let $n=16$.
\begin{enumerate}
\item For any $t$ with $t_{E_1} < t < t_\OO$, the moduli space $M_{A_t}(2,K,2)$ is isomorphic to $\P^5$
\item For any $t$ with $4 < t < t_{E_1}$, the moduli space $M_{A_t}(2,K,2)$ is isomorphic to the blowup of\, $\P^5$ at sixteen points.  Under the identification $\P^5 \cong \P\Ext^1(K,\OO)$, these sixteen points correspond to the images of the inclusions $\Hom(K,\OO_{E_i}(-1)) \to \Ext^1(K,\OO)$.
\end{enumerate}
\end{theorem}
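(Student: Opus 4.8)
The plan is to show that the description of the moduli spaces obtained in Section~\ref{sec-components} under the SHGH conjecture is, for $n=16$, unconditional, because every appeal to SHGH can be replaced by a direct computation; I would then read off the two cases from the (now unconditional) analogues of Theorem~\ref{thm-componentO}. The key structural fact is that $16$ is a perfect square, so by Nagata's theorem $B=4H-E$ is nef with no conjectural input. Hence Theorem~\ref{thm-D16} is unconditional: the only effective divisors $D$ with $\chi(D)\ge 1$ and $2B\cdot D<B\cdot K$ are $\OO$ and the exceptional divisors $E_i$. Combining this with Lemma~\ref{lem-free}, Theorem~\ref{thm-type}, and Proposition~\ref{prop-Dstablenecessary}, every $\mu_{A_t}$-semistable sheaf of character $\bv=(2,K,2)$ is a vector bundle whose type is one of $\OO,E_1,\dots,E_{16}$.

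Next I would isolate precisely where SHGH enters the proofs in Section~\ref{sec-components}, namely through the cohomological results of Section~\ref{sec-cohomology}: Theorem~\ref{thm-irr}, Corollary~\ref{cor-doubleCohomology}, and Proposition~\ref{prop-2DminusK}. Since these are now only ever applied to $D\in\{\OO,E_1,\dots,E_{16}\}$, they can be checked by hand. The class $\OO$ is trivial and each $E_i$ is a $(-1)$-curve, so each is reduced and irreducible with $h^0=1$ and $h^1=h^2=0$; this is the content of Theorem~\ref{thm-irr} for these classes. The cohomology of $\OO(2\cdot\OO)=\OO$ is immediate, and the cohomology of $\OO(2E_i)$ (namely $h^0=h^1=1$ and $h^2=0$) is the elementary statement already called ``clear'' in Corollary~\ref{cor-doubleCohomology}. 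For Proposition~\ref{prop-2DminusK}, Riemann--Roch gives $\chi(\OO(-K))=-6$ and $\chi(\OO(2E_i-K))=-5$, while $h^0=h^2=0$ in both cases because $-K$ and $2E_i-K$ are not effective (their $A_t$-degree is negative for $t$ near $\sqrt{16}$) and because $2K$ and $2K-2E_i$ have negative $H$-degree; thus $h^1(\OO(-K))=6$ and $h^1(\OO(2E_i-K))=5$, and no conjecture is used.

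With these substitutions, the arguments of Propositions~\ref{prop-Dstable} and~\ref{prop-Ostable}, Lemma~\ref{lem-ext}, and Theorem~\ref{thm-componentO} run verbatim. For $t_{E_1}<t<t_\OO$, Proposition~\ref{prop-Dstablenecessary} shows that a bundle of type $E_i$ cannot be semistable once $t>t_{E_i}=t_{E_1}$, so every semistable sheaf has type $\OO$; the argument of Theorem~\ref{thm-componentO}\eqref{t-cO-1} then shows the nonsplit bundles of type $\OO$ sweep out $\P\Ext^1(K,\OO)\cong\P H^1(\OO(-K))\cong\P^5$ and that this exhausts the moduli space, giving $M_{A_t}(2,K,2)\cong\P^5$. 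For $4<t<t_{E_1}$ the bundles of type $E_i$ become stable, and the argument of Theorem~\ref{thm-componentO}\eqref{t-cO-2} identifies $M_{A_t}(2,K,2)$ with the blowup of $\P\Ext^1(K,\OO)\cong\P^5$ at the sixteen points given by the canonical inclusions $\Hom(K,\OO_{E_i}(-1))\to\Ext^1(K,\OO)$, the bundles of type $E_i$ filling in the sixteen exceptional $\P^4$'s.

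The main obstacle is not any single hard computation but rather the bookkeeping needed to guarantee completeness: one must verify that no invocation of SHGH anywhere in Section~\ref{sec-components} concerns a divisor class other than $\OO$ or an $E_i$, so that the finitely many explicit cohomology computations above genuinely suffice to remove the conjectural hypothesis. The conceptual point is that for a perfect square the two ingredients SHGH was used for---the completeness of the type classification and the computation of the relevant cohomology---are both available directly, the former from Nagata's theorem via Theorem~\ref{thm-D16} and the latter from Riemann--Roch applied to the handful of classes that survive.
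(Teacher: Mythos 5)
Your proposal is correct and follows essentially the same route as the paper: the paper's proof likewise reduces to rerunning the arguments of Section~\ref{sec-components}, replacing every appeal to SHGH by the unconditional classification of Theorem~\ref{thm-D16} (available since $n=16$ is a perfect square, so Nagata holds) and by direct cohomology computations for the only surviving types $\OO$ and $E_i$, then reading off the two cases from Theorem~\ref{thm-componentO}. Your explicit Riemann--Roch checks ($\chi(\OO(-K))=-6$, $\chi(\OO(2E_i-K))=-5$) are a slightly more detailed version of what the paper dismisses as ``trivial,'' and they match.
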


\begin{proof}
We essentially have to repeat the sequence of arguments in Section~\ref{sec-components}, making modifications whenever the SHGH conjecture was used.  The conjecture was primarily used when appealing to Section~\ref{sec-cohomology} to determine cohomological properties of possible divisors $D$ which could lead to destabilizing objects.  However, when $n=16$, we have the complete unconditional classification of divisors $D$ satisfying $\chi(D)\geq 1$ and $2B\cdot D < B\cdot K$ provided by Theorem~\ref{thm-D16}: the only possible $D$ are $\OO$ and the $E_i$.  For these divisors, the statements in Section~\ref{sec-cohomology} become trivial, so this will be fairly straightforward.

In the proof of Proposition~\ref{prop-Dstable}, if a bundle of type $E_i$ is destabilized, then it is destabilized by a line bundle $K(-D')$ such that $D'-E_i$ is nontrivial effective, $2A_t\cdot D' < A_t\cdot K$, and $\chi(D')\geq 1$.  By Theorem~\ref{thm-D16}, there are no such possible $D'$.

Similar modifications can be made to the first paragraph of the proof of Proposition~\ref{prop-Ostable}, and the rest of the proof of that proposition does not refer to SHGH.

The only portion of Lemma~\ref{lem-ext} that is relevant is part~\eqref{le-3}, which clearly holds without SHGH.

Theorem~\ref{thm-componentD} only discusses components corresponding to nontrivial, nonexceptional divisors $D$ satisfying $\chi(D)\geq 1$ and $2B\cdot D < B\cdot K$; as there are no such divisors, the moduli space does not have any additional components.

The proof of Theorem~\ref{thm-componentO} makes use of the previous results from Section~\ref{sec-components}, but does not make any additional use of SHGH.
\end{proof}

Next we consider the case $n=25$ and the moduli space $M_{A_t}(2,K,4)$.  Note that the maximal Euler characteristic of an effective divisor $D$ satisfying $2B\cdot D < B\cdot K$ is $\chi(D) = 2$.  The argument in Lemma~\ref{lem-free} then shows that the maximal Euler characteristic of an $\mu_{A_t}$-semistable rank $2$ bundle $V$ with $c_1(V) = K$ is $\chi(V) = 4$ and that any $A_t$-semistable sheaf of character $(2,K,4)$ is a vector bundle.

\begin{theorem}
Let $n=25$.  For any $t$ with $5<t<\frac{27}{5}$, the moduli space $M_{A_t}(2,K,4)$ is isomorphic to a disjoint union of $25$ copies of\, $\P^8$.  The copies can be naturally identified with the spaces $\P\Ext^1(K(-D),\OO(D))$, where $D$ is one of the divisors $H-E_i$.
\end{theorem}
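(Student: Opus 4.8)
The plan is to transplant the sequence of arguments from Section~\ref{sec-components} to the character $\bv = (2,K,4)$ on $X$ with $n=25$, replacing every appeal to the SHGH conjecture by the \emph{unconditional} classification of Theorem~\ref{thm-D25} together with the fact that $B = 5H-E$ is genuinely nef (Nagata's theorem for perfect squares). First I would pin down the possible types. By the argument of Lemma~\ref{lem-free} (cited just above the statement), every $A_t$-semistable sheaf of character $(2,K,4)$ is a vector bundle, and Gieseker semistability implies $\mu_{A_t}$-semistability, so Proposition~\ref{prop-Dstablenecessary} applies: a bundle $V$ of type $D$ has $\chi(V) = 2\chi(\OO(D)) - l(Z)$ with $l(Z)\geq 0$, forcing $\chi(\OO(D))\geq 2$, while Proposition~\ref{prop-chiUpper} gives $\chi(\OO(D)) < \frac{24}{8} = 3$. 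Hence $\chi(\OO(D)) = 2$ and $l(Z)=0$, and Theorem~\ref{thm-D25} pins $D = H-E_i$ for some $i$. Thus $V$ fits in $0\to \OO(H-E_i)\to V\to K(-H+E_i)\to 0$ with no ideal-sheaf twist. One checks $t_D = \frac{27}{5}$ solves $2A_{t_D}\cdot D = A_{t_D}\cdot K$, and a slope comparison shows $\mu_{A_t}(\OO(D)) < \mu_{A_t}(V) < \mu_{A_t}(K(-D))$ precisely in the ample range $5 < t < \frac{27}{5}$; in particular the split bundle is unstable there, so every stable bundle of this character is a \emph{nonsplit} extension of type $H-E_i$.

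Next I would establish stability following the template of Proposition~\ref{prop-Dstable}. If a nonsplit $V$ of type $H-E_i$ were destabilized for some $t$ with $5<t<\frac{27}{5}$, the destabilizer would be a saturated line subbundle $K(-D')$ with $D'-(H-E_i)$ nontrivial effective, $\chi(D')\geq 1$, and $2B\cdot D' < B\cdot K$. The crucial substitution for SHGH is that, for $n=25$, the effective divisors with $\chi(D')\geq 1$ and $2B\cdot D' < B\cdot K$ form the \emph{finite} list tabulated before Theorem~\ref{thm-D25}. I would run through this list and check that none dominates $H-E_i$: since $B$ is nef, $D' \geq H-E_i$ forces $B\cdot D' \geq B\cdot(H-E_i) = 4$, restricting to the four classes with $2B\cdot D' = 8$, and a direct inspection (the $H$-coefficient of $D'-(H-E_i)$ is negative, or the residual class is noneffective by a Riemann--Roch count) rules each out. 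This contradiction yields $\mu_{A_t}$-stability, hence Gieseker stability, throughout the interval, and simultaneously shows there are no strictly semistable sheaves.

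I would then compute the cohomological inputs directly for the explicit divisor, since Corollary~\ref{cor-doubleCohomology}, Proposition~\ref{prop-2DminusK}, and Lemma~\ref{lem-ext} are only stated under SHGH. Here $2D-K = 5H - 3E_i - \sum_{j\neq i}E_j$ has $B\cdot(2D-K) = 2B\cdot D - B\cdot K < 0$, so it is noneffective and $h^0 = 0$ by nefness of $B$; since $2(K-D)$ has negative $H$-coefficient, $h^2 = 0$ too, and Riemann--Roch gives $\chi(\OO(2D-K)) = -9$, whence $\Ext^1(K(-D),\OO(D)) \cong H^1(\OO(2D-K)) \cong \CC^9$ and $\P\Ext^1 \cong \P^8$. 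Likewise $2D = 2H-2E_i$ is the linear system of conics with a node at $p_i$, for which $h^0 = 3$ and $h^1 = h^2 = 0$; this supplies exactly the vanishings $H^1(\OO(2D)) = H^2(\OO(2D)) = 0$ needed so that the three $\Hom$-functor diagram chases in the proof of Lemma~\ref{lem-ext} go through verbatim, giving $\hom(V,V) = 1$ and $\ext^1(V,V) = -\chi(\OO(2D-K)) - 1 = 8$.

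Finally I would assemble the moduli space exactly as in Theorem~\ref{thm-componentD}. The universal extension over $\P\Ext^1(K(-D),\OO(D))$ defines a morphism $\phi_i\colon \P^8 \to M_{A_t}(2,K,4)$ which is injective because $\hom(V,K(-D)) = 1$, and whose derivative is an isomorphism because $\ext^1(V,V) = 8 = \dim\P^8$ and the composite $\Ext^1(K(-D),\OO(D)) \to \Ext^1(K(-D),V) \to \Ext^1(V,V)$ is surjective, using $\Ext^1(\OO(D),V) = H^1(V(-D)) = 0$ (which follows from $H^1(\OO_X) = H^1(\OO(2D)) = 0$). Hence each $\phi_i$ is an isomorphism onto a connected component isomorphic to $\P^8$. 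By the uniqueness of type (Theorem~\ref{thm-type}) the $25$ images are pairwise disjoint, and by the first paragraph every semistable sheaf lies in exactly one of them, so $M_{A_t}(2,K,4) \cong \bigsqcup_{i=1}^{25}\P^8$, with the copies naturally identified with $\P\Ext^1(K(-D),\OO(D))$ for $D = H-E_i$. I expect the only genuine obstacle to be the stability step: one must verify that the finite divisor list contains no class strictly above any $H-E_i$, and that the passage from $\mu_{A_t}$-stability to the absence of strictly semistable sheaves is airtight across the entire open interval $5 < t < \frac{27}{5}$.
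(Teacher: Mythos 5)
Your proposal is correct and follows essentially the same route as the paper: reduce to types $D = H-E_i$ via Theorem~\ref{thm-D25}, rule out destabilizing sub-line-bundles, verify the cohomology of $\OO(2D)$ and $\OO(2D-K)$ needed for Lemma~\ref{lem-ext}, and then run the argument of Theorem~\ref{thm-componentD} verbatim. The only (harmless) deviation is that the paper uses the sharper necessary condition $\chi(D')\geq 2$ on a destabilizer (forced by $\chi(V)=4$), which reduces the check instantly to the classes $H-E_j$, whereas you work with the longer $\chi(D')\geq 1$ list and rule out its entries by hand.
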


\begin{proof}
Recall that by Theorem~\ref{thm-D25},  any effective divisor $D$ satisfying $\chi(D) \geq 2$ and $2B\cdot D < B\cdot K$ is of the form $D = H-E_i$ for some $i$.  If a bundle $V$ of type $D = H-E_i$ is not $A_t$-stable, then it is destabilized by a line bundle $L = K(-D')$.  Here $D'$ must be an effective divisor such that $D'-D$ is nontrivial effective, $2A_t \cdot D' < A_t\cdot K$, and $\chi(D') \geq 2$, as in the proof of Proposition~\ref{prop-Dstable}.  There are no such divisors $D'$. 
 
For these divisors $D$, Lemma~\ref{lem-ext} clearly holds without SHGH.  The proof of Theorem~\ref{thm-componentD} goes through without further modification to complete the result.
\end{proof}


\bibliographystyle{plain}

\begin{thebibliography}{ACGH85+++}

\bibitem[CM11]{CilibertoMiranda}
C.~Ciliberto and R.~Miranda, \emph{Homogeneous interpolation on ten points}, J.~Algebraic Geom.\ {\bf 20} (2011), no.~4, 685--726.
  
\bibitem[CH18a]{CoskunHuizengaPathologies}
  I.~Coskun and J.~Huizenga, \emph{The moduli spaces of sheaves on surfaces, pathologies, and Brill-Noether problems}, in: \emph{Geometry of Moduli} (J.~Christophersen and K.~Ranestad, eds), pp.~75--105, Abel Symp.\ Springer, Cham, 2018.
   
\bibitem[CH18b]{CoskunHuizengaWBN}
  \bysame, \emph{Weak Brill-Noether for rational surfaces}, in: \emph{Local and Global Methods in Algebraic Geometry}, pp.~81--104, Contemp.\ Math.\ vol.~712, Amer.\ Math.\ Soc., Providence, RI, 2018.
  
\bibitem[CH20]{CoskunHuizengaBN}
\bysame, \emph{Brill-Noether theorems and globally generated vector bundles on Hirzebruch surfaces}, Nagoya Math.~J.\ {\bf 238} (2020), 1--36.
  
\bibitem[CH21]{CoskunHuizengaHExist}
  \bysame, \emph{Existence of semistable sheaves on Hirzebruch surfaces}, Adv.\ Math.\ {\bf 381} (2021), Paper No.~107636.

\bibitem[CHK22]{CoskunHuizengaKopperDisconnected}
I.~Coskun, J.~Huizenga, and J.~Kopper, \emph{Disconnected moduli spaces of stable bundles on surfaces}, Bull.\ Lond.\ Math.\ Soc.\ {\bf 54} (2022), no.~2, 812--824.
  
 \bibitem[CW22]{CoskunWoolf}
I.~Coskun and M.~Woolf, \emph{The stable cohomology of moduli spaces of sheaves on surfaces}, J.~Differential Geom.\ {\bf 121} (2022), no.~2, 291--340. 

\bibitem[Dav99]{Davenport} 
H.~Davenport, \emph{The higher arithmetic}, 7th ed., Cambridge Univ.\ Press, Cambridge, 1999.

\bibitem[DJ07]{DumnickiJarnicki}
  M.~Dumnicki and W.~Jarnicki, \emph{New effective bounds on the dimension of a linear system in $\mathbb P^2$}, J.~Symbolic Comput.\ {\bf 42} (2007), no.~6, 621--635.
  
  \bibitem[Fri89]{Friedman2}
  R.~Friedman, \emph{Rank two vector bundles over regular elliptic surfaces}, Invent.\ Math.\ {\bf 96} (1989), no.~2, 283--332.
 
  \bibitem[FM88]{Friedman}
R. Friedman and J.\,W.~Morgan, \emph{On the diffeomorphism types of certain algebraic surfaces. I.}, J.~Differential Geom.\ {\bf 27} (1988), no.~2, 297--369.


\bibitem[Gie77]{Gieseker}
  D.~Gieseker, \emph{On the moduli space of vector bundles on an algebraic surface}, Ann.\ of Math.~(2) {\bf 106} (1977), no.~1, 45--60.
  
  \bibitem[Gim87]{Gimigliano}
    A.~Gimigliano, \emph{On linear systems of plane curves}, PhD thesis, Queen's University, Kingston, ON, Canada, 1987.

  
\bibitem[G\"ot90]{Gottsche}
L.~G\"ottsche, \emph{The Betti numbers of the Hilbert scheme of points on a smooth projective surface}, Math.\ Ann.\ {\bf 286} (1990), no.~1-3, 193--207.

\bibitem[Har86]{Harbourne}
B.~Harbourne, \emph{The geometry of rational surfaces and Hilbert functions of points in the plane}, in: \emph{Proceedings of the 1984 Vancouver conference in algebraic geometry}, pp.~95--111, CMS Conf.\ Proc., vol.~6, Amer.\ Math.\ Soc., Providence, RI, 1986.  

\bibitem[Hir89]{Hirschowitz}
A.~Hirschowitz, \emph{Une conjecture pour la cohomologie des diviseurs sur les surfaces rationnelles g\'en\'eriques},
J.~reine angew.\ Math.\ {\bf 397} (1989), 208--213.

\bibitem[HL10]{HuybrechtsLehn}
  D.~Huybrechts and M.~Lehn, \emph{The Geometry of Moduli Spaces of Sheaves}, 2nd ed., Cambridge Math.\ Lib., Cambridge Univ.\ Press, Cambridge, 2010.
    
  \bibitem[Kot89]{Kotschick}
D.~Kotschick, \emph{On manifolds diffeomorphic to $\CC P^2 \# 8\overline{\CC P^2}$}, Invent.\ Math.\ {\bf 95} (1989), no.~3, 591--600.

\bibitem[LeP97]{LePotier}
J.~Le Potier, \emph{Lectures on vector bundles} (translated by A.~Maciocia), Cambridge Stud.\ Adv.\ Math., vol.~54, Cambridge Univ.\ Press, Cambridge, 1997. 

\bibitem[LZ19]{LevineZhang}
D.~Levine and S.~Zhang, \emph{Brill-Noether and existence of semistable sheaves for del Pezzo surfaces}, to appear in Ann.\ Inst.\ Fourier, preprint \arXiv{1910.14060} (2019).  

\bibitem[Mar78]{Maruyama}
  M.~Maruyama, \emph{Moduli of stable sheaves II}, J.~Math.\ Kyoto {\bf 18} (1978), no.~3, 557--614.
  
  \bibitem[Mes97]{Mestrano}
N.~Mestrano, \emph{Sur les espaces de modules des fibr\'{e}s vectoriels de rang deux sur des hypersurfaces de $\PP^3$}, J.~reine angew.\ Math.\ {\bf 490} (1997), 65--79.

\bibitem[MS11]{MestranoSimpson}
N.~Mestrano and C.~Simpson, \emph{Obstructed bundles of rank two on a quintic surface}, Internat.~J.\ Math.\ {\bf 22} (2011), no.~6, 789--836.

\bibitem[Nag59]{Nagata}
M.~Nagata, \emph{On the $14$-th problem of Hilbert}, Amer.~J.\ Math.\ {\bf 81} (1959), 766--772.
  
  \bibitem[O'G96]{OGrady}
K.\,G.~O'Grady, \emph{Moduli of vector bundles on projective surfaces: some basic results}, Invent.\ Math.\ {\bf 123} (1996), no.~1, 141--207.

\bibitem[OVdV86]{OkonekVandeven}
C.~Okonek and A.~Van de Ven, \emph{Stable bundles and differentiable structures on certain elliptic surfaces}, Invent.\ Math.\ {\bf 86} (1986), no.~2, 357--370.

\bibitem[Pet14]{Petrakiev}
  I.~Petrakiev, \emph{Homogeneous interpolation and some continued fractions}, Trans.\ Amer.\ Math.\ Soc.\ Ser.~B {\bf 1} (2014), 23--44. 

\bibitem[Seg60]{Segre}
B.~Segre, \emph{Alcune questioni su insiemi finiti di punti in geometria algebrica}, Univ.\ e Politec.\ Torino Rend.\ Sem.\ Mat.\ {\bf 20} 1960/1961, 67–-85.

\bibitem[Sho67]{Shockley}
  J.\,E.~Shockley, \emph{Introduction to Number Theory}, Holt, Rinehart and Winston, Inc., New York-Toronto-London, 1967.

\bibitem[Wal98]{Walter}
C.~Walter, \emph{Irreducibility of moduli spaces of vector bundles on birationally ruled surfaces}, in: \emph{Algebraic Geometry} (Catania, 1993/Barcelona, 1994), pp.~201--211, Lecture Notes in Pure and Appl.\ Math., vol.~200, Marcel Dekker, Inc., New York, 1998.

\bibitem[Yan07]{Yang}
  S.~Yang, \emph{Linear systems in ${\mathbb P}^2$ with base points of bounded multiplicity}, J.~Algebraic Geom.\ {\bf 16} (2007), no.~1, 19--38.

\bibitem[Zha22]{Zhao}
  J. Zhao, \emph{Moduli spaces of sheaves on general blow-ups of $\PP^2$}, preprint \arXiv{2208.03619} (2022). 
 


\end{thebibliography}
  
\end{document}